\theoremstyle{plain}
\newtheorem*{thm*}{Theorem}
\newtheorem{theorem}{Theorem}
\newtheorem{lemma}{Lemma} 
\newtheorem{proposition}{Proposition}
\newtheorem{corollary}{Corollary}
\theoremstyle{definition}
\newtheorem*{definition*}{Definition}
\newtheorem{definition}{Definition} 
\newtheorem{remark}{Remark}
\numberwithin{theorem}{section}
\numberwithin{lemma}{section}
\numberwithin{proposition}{section}
\numberwithin{corollary}{section}
\numberwithin{definition}{section}
\numberwithin{remark}{section}
\numberwithin{example}{section}
\newcommand{\HG}[4]{{}_{2}F_{1}\left(#1,#2;#3;#4\right)}
\newcommand{\HGalt}{{}_{2}F_{1}}
\newcommand*{\dif}{\mathop{}\!\mathrm{d}}
\title[Excursion theory for the Wright--Fisher diffusion]{Excursion theory for the Wright--Fisher diffusion}
\author[P.\ A.\ Jenkins]{Paul\ A.\ Jenkins}
\address[Paul\ A.\ Jenkins]{
	Department of Statistics\\University of Warwick\\ United Kingdom\\ and Department of Computer Science\\ University of Warwick\\ United Kingdom}
\email{p.jenkins@warwick.ac.uk}
\author[J.\ Koskela]{Jere\ Koskela}
\address[Jere\ Koskela]{
	School of Mathematics, Statistics and Physics\\ Newcastle University\\ United Kingdom}
\email{jere.koskela@ncl.ac.uk}
\author[V.\ M.\ Rivero]{Victor\ M.\ Rivero}
\address[Victor\ M.\ Rivero]{
    Centro de Investigaci{\'o}n en Matem{\'a}ticas\\ Guanajuato\\ Mexico}
\email{rivero@cimat.mx}
\author[J.\ Sant]{Jaromir\ Sant}
\address[Jaromir\ Sant]{
	Department of Statistics\\University of Oxford\\ United Kingdom}
\email{jaromir.sant@stats.ox.ac.uk}
\author[D.\ Span\`o]{Dario\ Span\`o}
\address[Dario\ Span\`o]{
	Department of Statistics\\University of Warwick\\ United Kingdom}
\email{d.spano@warwick.ac.uk}
\author[I.\ Valenti\'c]{Ivana\ Valenti\'c}
\address[Ivana\  Valenti\'c]{
	Department of Mathematics\\University of Zagreb\\ Croatia}
\email{ivana.valentic@math.hr}
\keywords{Wright--Fisher diffusion, excursion theory, hypergeometric function, entrance law, local time, Jacobi polynomials}
\begin{document}
\allowdisplaybreaks[4]

\begin{abstract}
    In this work, we develop excursion theory for the Wright--Fisher diffusion with mutation. Our construction is intermediate between the classical excursion theory where all excursions begin and end at a single point and the more general approach considering excursions of processes from general sets. Since the Wright--Fisher diffusion has two boundary points, it is natural to construct excursions which start from a specified boundary point, and end at one of two boundary points which determine the next starting point. In order to do this we study the killed Wright--Fisher diffusion, which is sent to a cemetery state whenever it hits either endpoint. We then construct a marked Poisson process of such killed paths which, when concatenated, produce a pathwise construction of the Wright--Fisher diffusion.
\end{abstract}

\maketitle

\section{Introduction}

The Wright--Fisher diffusion is one of the most prominent forwards-in-time models used within mathematical population genetics to describe the way in which population-level allele frequencies change over time. Various genetic phenomena such as mutation and selection can be straightforwardly incorporated, offering a robust and rich probabilistic model to explain the genetic variation present around us. This diffusion has been studied for decades, and many probabilistic aspects are now well understood. Despite its prominent status, both in population genetics and as a canonical diffusion on the unit interval, the Wright--Fisher diffusion falls outside the usual scope of general diffusion theory in several ways: its diffusion coefficient vanishes and fails to be Lipschitz-continuous at the boundary of its domain.

While a whole theory of Wright--Fisher diffusions (as well as other singular diffusions) has been developed to circumvent these issues (see e.g.\ \cite{YamadaWatanabe1971}), there remain questions to which existing tools based on stochastic differential equations (SDEs) and martingale problems are ill-suited. For example, the fact that the diffusion coefficient vanishes at the boundary implies that the laws of any two Wright--Fisher diffusions whose drift coefficients differ at a boundary point are singular, which complicates their use in path-based inference and simulation applications \cite{Griffithsetal2018, JenkinsSpano2017, Santetal2022, Santetal2023}. All currently available descriptions of the Wright--Fisher diffusion are based on either the generator or the corresponding SDE. We provide a third way of viewing the process through the machinery of excursion theory. Aside from its intrinsic interest, the excursion theory viewpoint has a few advantages. By giving a sample-path construction of the process, we obtain a direct description of the lifetime of an allele from the time it first arises in the population. Second, an excursion approach puts the diffusion's boundary behaviour directly under the spotlight and thereby sheds light on how the mutation parameters influence this. Third, we believe that this characterisation of Wright--Fisher paths will also pave the way to produce more powerful statistical methods for inferring the mutation parameters, which to date has received limited attention in the literature due to the aforementioned problems. As an application, we derive the Hausdorff dimension of the time spent by the diffusion at each of the boundary points, and hence show that diffusions with differing mutation parameters are mutually singular, providing an alternative proof of a recent result in \cite{Jenkins24}.

In this article we present an excursion-based construction of the neutral Wright--Fisher diffusion, whose drift coefficient is an affine function of the state. Excursion theory has been a prominent tool in the study of processes with a single boundary point, such as Bessel processes and reflecting Brownian motions \cite{Blumenthal, Ito,PitmanYor81, RogersExcursions, RogersWilliams}. It is a classical area of probability where a diffusion process is viewed not as a single continuous path but rather as a collection of excursions away from a point indexed by the local time at said point. 
Diffusions with a single boundary point are natural subjects for excursion theory because all excursions begin and end at a single point, facilitating a Markovian construction of a path from concatenated excursions. Excursions of processes from more general sets have also been described, but the results are rarely tractable because it is in general not clear how to concatenate the resulting excursions into a path \cite[Section 8]{RogersExcursions}; the theory of \emph{exit systems} is a general version of this idea \cite{Blumenthal, mai:1975}. Our construction is intermediate between these two regimes, featuring excursions which always start from a specified point, and end at one of two points which determine the next starting point.
We are thus able to provide a construction based on a labelled Poisson process of excursions, with the labels determining how excursions connect to form a continuous path.

We start by considering a simple one-dimensional Wright--Fisher diffusion with mutation, for which a number of results are known but scattered across the literature. We bring these results together and present them in Section \ref{SectionElementary} alongside some interesting relations with the hypergeometric $\HGalt$ functions, for which all the necessary known properties are given in Appendix \ref{SectionApendix}.
Closely related to the Wright--Fisher diffusion, and central to the development of an excursion theory, is the corresponding \emph{killed} process, i.e.\ the Wright--Fisher diffusion which is sent to a cemetery state when it hits either of the boundary points 0 or 1.
The killed diffusion coincides with the usual unkilled one in the interior of its domain.
However, killing at the boundary drastically changes various probabilistic objects, such as the resolvent, Laplace transforms of hitting times, and the transition density, all of which we derive and study in Section \ref{SectionKilled}. 
These quantities are of independent interest in applied sciences, e.g.\ neuroscience \cite{d2018jacobi}.
Section \ref{SectionExcursions} then puts together the results obtained in the previous two sections to develop the full excursion theory for the Wright--Fisher diffusion, which allows for an explicit calculation of the Hausdorff dimension of the time spent at the boundary points by said diffusion.

\section{Elementary results for Wright--Fisher diffusions}\label{SectionElementary}

Consider a Wright--Fisher diffusion $X := (X_t)_{t \geq 0}$ with mutation parameters $\theta_1$ and $\theta_2$, that is a diffusion process on $[0,1]$ with generator given by
 \begin{equation}\label{WFGenerator}
        \mathscr{G} = \frac{1}{2}x(1-x)\frac{\dif^{2}}{\dif x^{2}} + \frac{1}{2}\left(\theta_1 - |\theta|x\right)\frac{\dif}{\dif x},
\end{equation}
acting on domain $C^2([0,1])$, where we have introduced the notation $|\theta| := \theta_1+\theta_2$ \cite[Chapter 10]{EthierKurtz}. 
By Feller's boundary classification \cite[Example 15.6.8, p 240]{KarlinTaylor}, both boundary points are regular if
\begin{equation}\label{assumption}
    0<\theta_1, \theta_2<1.
\end{equation}
We will only be interested in this case since regularity of the boundary means that a process started from a boundary point can enter the interior $(0,1)$, and again reach both endpoints.
This will be essential in order to build the excursions started from the boundary points.
However, some results in this section hold for any $\theta_1,\theta_2>0$ and we will therefore emphasise when we do not use the additional assumption.

\begin{remark}
    One could also consider excursions from endpoint $0$ when $0<\theta_1<1$ and $\theta_2\geq 1$, or from endpoint $1$ when $0<\theta_2<1$ and $\theta_1\geq 1$. In those cases, however, the process could never reach the opposite endpoint and therefore we would be back to the standard setting of excursions from a single point.
\end{remark}

The generator can be rewritten as
\begin{equation}\label{generatorWF}
     \mathscr{G} =\frac{1}{m(x)}\frac{\dif}{\dif x}\frac{1}{s'(x)}\frac{\dif}{\dif x},
\end{equation}
where we have that
\begin{equation*}
    m(x)=\frac{\Gamma(|\theta|)}{\Gamma(\theta_1)\Gamma(\theta_2)}x^{\theta_1 -1}(1-x)^{\theta_2 -1}=\frac{x^{\theta_1 -1}(1-x)^{\theta_2 -1}}{B(\theta_1,\theta_2)} 
\end{equation*}
is the density on $(0,1)$ of the speed measure of $X$ with respect to Lebesgue measure, and 
\begin{equation}\label{s'}
    s'(x)=2\frac{\Gamma(\theta_1)\Gamma(\theta_2)}{\Gamma(|\theta|)}x^{-\theta_1}(1-x)^{-\theta_2}= 2B(\theta_1,\theta_2) x^{-\theta_1}(1-x)^{-\theta_2}
\end{equation}
is the derivative of the scale function $s(x)$  (see \cite[Chapter 15]{KarlinTaylor}). We further denote by $m_{a, b}(x) := x^{\theta_1 + a -1}(1-x)^{\theta_2 + b -1}/B(\theta_1 + a,\theta_2 + b)$  the density of a Beta distribution with parameters $\theta_1 + a, \theta_2 + b \in \mathbb{R}_{+}$. Henceforth, we shall reserve the notation $m(x)$ for the density of a $\textnormal{Beta}(\theta_1, \theta_2)$. We note here that the constants in these definitions can be altered by $m(\dif x) \mapsto cm(\dif x)$, $s(\dif x) \mapsto c^{-1}s(\dif x)$, and \eqref{generatorWF} still holds. Some authors use other constants such as $m(x)=2x^{\theta_1 -1}(1-x)^{\theta_2 -1}$. In our normalization $m$ is also the stationary distribution for $X$. We will abuse notation and use $m$ to denote both the speed measure and its density; which one is intended should be clear from the context. With this convention, we also note that $m(\{0\}) =0$, which follows from the relationship \cite[Section I, Number 7, p16]{BorodinSalminen}
    \begin{equation}\label{m0}
    f^{+_s}(0+) = m(\{0\})\mathscr{G}f(0), \qquad f \in C^2([0,1]),
    \end{equation}
    where $f^{+_{s}}(x)$ denotes the right-hand derivative with respect to the scale function, formally
\begin{equation}
    \label{scale-derivative}
    f^{+_{s}}(x) = \lim_{h\searrow 0}\frac{f(x+h)-f(x)}{s(x+h)-s(x)}=\frac{f^+(x)}{s^+(x)},
\end{equation}
and where $f^+$ is a derivative of $f$ from the right; similarly $m(\{1\}) = 0$.

According to formula (7.14) in \cite{crow1956some}, the transition density with respect to the Lebesgue measure for the Wright--Fisher diffusion can, regardless of the assumption $0<\theta_1,\theta_2<1$, be expressed as
\begin{align}\label{transitiondensityWF}
    p(t,x,y) =\frac{\Gamma(\theta_1+\theta_2)}{\Gamma(\theta_1)\Gamma(\theta_2)}y^{\theta_1 -1}(1-y)^{\theta_2 -1}   \sum_{n=0}^{\infty}\frac{1}{\pi_n}e^{-\frac{n(n+|\theta|-1)t}{2}}R_{n}^{(\theta_1, \theta_2)}(x)R_{n}^{(\theta_1, \theta_2)}(y),
\end{align}
where
\begin{equation*}
    R_{n}^{(\theta_1, \theta_2)}(x) = \HG{-n}{n+|\theta|-1}{\theta_2}{1-x}.
\end{equation*}
With $(a)_n:=a(a+1)\ldots(a+n-1)$ defined as the rising factorial (setting $(a)_0 := 1$), the hypergeometric function $\HGalt$ is defined by the power series
\begin{equation*}
    \HG{a}{b}{c}{x}=\sum_{n=0}^{\infty}\frac{(a)_n(b)_n}{(c)_nn!}x^n,
\end{equation*}
under suitable conditions for $a$, $b$ and $c$ (see Appendix \ref{SectionApendix}). 
Since $(-n)_k=0$ for $k\geq n+1$, the function $R_{n}^{(\theta_1, \theta_2)}$ is a polynomial of finite degree.
In fact it is obtained from a Jacobi polynomial by mapping their usual domain $[-1,1]$ to $[0,1]$ through $x \mapsto (x + 1) / 2$ (see \cite[equations (3.7), (3.8)]{GriffithsSpano2010}). The constant $\pi_n$ is
\begin{equation*}
    \pi_n=\int_0^1 \frac{x^{\theta_1-1}(1-x)^{\theta_2-1}}{B(\theta_1,\theta_2)}\left( R_{n}^{(\theta_1, \theta_2)}(x)\right)^2 \dif x=
    \frac{n!(\theta_1)_{(n)}}{(|\theta|+2n-1)(|\theta|)_{(n-1)}(\theta_2)_{(n)}}.
\end{equation*}
We point out that the $\{R_{n}^{(\theta_1, \theta_2)}(x)\}_{n \in \mathbb{N}}$ form an orthogonal polynomial system on the $\textnormal{Beta}(\theta_1, \theta_2)$ distribution, which is the stationary distribution we are interested here.

We denote by $\mathbb{P}_x$ the law of $X$ when the process is started at $x$ and by $\mathbb{E}_{x}[\cdot]$ the corresponding expectation. We denote by $p_{m}(t,x,y)$ the transition density of the process with respect to the speed measure $m$, which is 
\begin{equation*}
    \mathbb{P}_x\left(X_t\in A \right)=\int_{A}p_{m}(t,x,y)\,m(\dif y), 
\end{equation*}
for any $A \in \mathcal{B}([0,1])$ and $x\in [0,1]$. Therefore the transition density (with respect to Lebesgue measure) can be expressed as 
\begin{equation*}
    p(t,x,y)=p_m(t,x,y)m(y)=\frac{y^{\theta_1 -1}(1-y)^{\theta_2 -1}}{B(\theta_1,\theta_2)}p_m(t,x,y).
\end{equation*}
An alternative well-known expression \cite{gri:1979:AAP11:310, gri:li:1983} is given by
\begin{align}\label{TransitionDensityCoalescent}
    p(t, x, y) = \sum_{n = 0}^{\infty}q_{n}^{|\theta|}(t)\sum_{k=0}^{n}\binom{n}{k}x^k(1-x)^{n-k}\frac{y^{\theta_1+k-1}(1-y)^{\theta_2+n-k-1}}{B(\theta_1+k, \theta_2+n-k)},
\end{align}
where $q_{n}^{|\theta|}(t)$ is the transition probability of a pure death process $(D_t)_{t\geq 0}$ on $\mathbb{N}$ having transitions $n \mapsto n-1$ at rate $n(n+|\theta|-1)/2$ and with an entrance boundary at $\infty$; that is, $q_n^{|\theta|}(t) = \lim_{m\to\infty}\mathbb{P}(D_t = n\mid D_0 = m)$. This death process describes the distribution of the number of lineages in a coalescent process when lineages are lost by both mutation and coalescence, and \eqref{TransitionDensityCoalescent} is a manifestation of the famous duality between the coalescent model and the Wright--Fisher diffusion \cite{don:kur:1999:AAP}.  

The probability mass function $\{q_n^{|\theta|}(t):n\in\mathbb{N}\}$ has the form
\begin{align*}
    q^{|\theta|}_n(t)=\sum_{m=n}^\infty e^{-\frac{n(n+|\theta|-1)t}{2}}a_{nm}^{|\theta|}
\end{align*}
with coefficients $a_{nm}^{|\theta|}$ explicit \cite{gri:1979:AAP11:310}. The connection between the expansions in \eqref{transitiondensityWF} and \eqref{TransitionDensityCoalescent} is obtained using the identity
\begin{align*}
\frac{1}{\pi_n}
R_{n}^{(\theta_1, \theta_2)}(x)
R_{n}^{(\theta_1, \theta_2)}(y)
=\sum_{m=0}^na_{n,m}^{|\theta|}\sum_{k=0}^{m}\binom{m}{k}\frac{x^k(1-x)^{m-k}y^{k}(1-y)^{m-k}}{\frac{(\theta_1)_{k}(\theta_2)_{m-k}}{(|\theta|)_{m}}}
    \end{align*}

(see \cite{GriffithsSpano2010, GriffithsSpano2013}), and subsequently inverting the order of summation to get \eqref{TransitionDensityCoalescent}.

We also observe that
\begin{align}\label{TransitionDensityCoalescentm}
    p_{m}(t, x, y) = \sum_{n = 0}^{\infty}q_{n}^{|\theta|}(t)\sum_{k=0}^{n}\binom{n}{k}\frac{x^k(1-x)^{n-k}y^{k}(1-y)^{n-k}}{\frac{(\theta_1)_{k}(\theta_2)_{n-k}}{(|\theta|)_{n}}}.
\end{align}

The Green's function is defined as
\begin{equation}\label{GreenDef}
    G_{\lambda}(x,y) := \int_{0}^{\infty}e^{-\lambda t}p_{m}(t,x,y)\dif t,
\end{equation}
and it is important to note that we define it \emph{with respect to the speed measure $m$}. 

\begin{proposition}
    \label{prop:Green}
Three equivalent formulations for the Green's function $G_{\lambda}(x,y)$ of the Wright--Fisher diffusion are
 \begin{align}
 &G_{\lambda}(x,y) \notag \\
&=\sum_{n=0}^{\infty}\frac{2}{2\lambda+n(n+\theta-1)}\frac{1}{\pi_n}R_{n}^{\theta_1,\theta_2}(x)R_{n}^{\theta_1,\theta_2}(y) \label{eq:greensjacobi}\\
&= \sum_{n=0}^{\infty}\frac{1}{\lambda +\lambda_n}\prod_{j=n+1}^\infty \left(\frac{\lambda_j}{\lambda + \lambda_j}\right)\sum_{k=0}^{n}\binom{n}{k}\frac{x^k(1-x)^{n-k}y^k(1-y)^{n-k}}{\frac{(\theta_1)_{k}(\theta_2)_{n-k}}{(|\theta|)_{n}}}\label{eq:greenproduct}\\
&= \frac{2\Gamma(a(\lambda))\Gamma(b(\lambda))} {\Gamma(|\theta|)}\HG{a(\lambda)}{b(\lambda)}{\theta_1}{x\wedge y}\HG{a(\lambda)}{b(\lambda)}{\theta_2}{1-(x \vee y)},\label{eq:greenwronskian}
    \end{align}
where we define 
\begin{equation}\label{a&b}
    a(\lambda) := \frac{|\theta|-1+\sqrt{(|\theta|-1)^{2}-8\lambda}}{2} \, \, \, \text{ and } \, \, \,  b(\lambda) := \frac{|\theta|-1-\sqrt{(|\theta|-1)^{2}-8\lambda}}{2},
\end{equation}
and set $\lambda_j:=j(j+|\theta|-1)/2$, for any $j\in\mathbb Z_+$.
\end{proposition}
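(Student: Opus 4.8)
The plan is to read off the three representations from three equivalent descriptions of the transition density. Since $G_{\lambda}(x,y)$ is by definition the Laplace transform in $t$ of $p_m(t,x,y)$, the spectral form \eqref{eq:greensjacobi} and the coalescent-dual form \eqref{eq:greenproduct} should each drop out by integrating the corresponding series for $p_m$ term by term, whereas the Wronskian form \eqref{eq:greenwronskian} is the classical resolvent kernel of a one-dimensional diffusion and is best derived directly from the Sturm--Liouville problem $\mathscr{G}u=\lambda u$. For \eqref{eq:greensjacobi} I would divide \eqref{transitiondensityWF} by $m(y)$ to expose the spectral expansion of $p_m(t,x,y)$ in the orthogonal system $\{R_n^{(\theta_1,\theta_2)}\}$, and then integrate each summand against $e^{-\lambda t}$; the elementary identity $\int_0^\infty e^{-(\lambda+\lambda_n)t}\dif t=(\lambda+\lambda_n)^{-1}=2/(2\lambda+n(n+|\theta|-1))$ produces exactly the stated coefficients. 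The one point needing care is the interchange of sum and integral, justified for $\lambda$ in a suitable right half-plane by the exponential decay of $e^{-\lambda_n t}$ together with local boundedness of the polynomials $R_n^{(\theta_1,\theta_2)}$ and controlled growth of $\pi_n^{-1}$, via dominated convergence.

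For \eqref{eq:greenproduct} I would integrate \eqref{TransitionDensityCoalescentm} term by term, which reduces everything to the Laplace transform of $q_n^{|\theta|}(t)$. Describing the pure death process started at $\infty$ through its independent exponential holding times $E_j\sim\mathrm{Exp}(\lambda_j)$, the event $\{D_t=n\}$ equals $\{\sum_{j>n}E_j\le t<\sum_{j\ge n}E_j\}$, whence
\begin{equation*}
\int_0^\infty e^{-\lambda t}q_n^{|\theta|}(t)\dif t=\frac{1}{\lambda+\lambda_n}\,\mathbb{E}\Big[e^{-\lambda\sum_{j>n}E_j}\Big]=\frac{1}{\lambda+\lambda_n}\prod_{j=n+1}^\infty\frac{\lambda_j}{\lambda+\lambda_j},
\end{equation*}
which is precisely the weight appearing in \eqref{eq:greenproduct}; the infinite product converges since $\lambda_j\sim j^2/2$, and the remaining polynomial-in-$(x,y)$ factor is already present in \eqref{TransitionDensityCoalescentm}.

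The substantive part is \eqref{eq:greenwronskian}. Multiplying $\mathscr{G}u=\lambda u$ by $2$ turns it into the hypergeometric equation $x(1-x)u''+(\theta_1-|\theta|x)u'-2\lambda u=0$, with $c=\theta_1$ and with $a(\lambda),b(\lambda)$ the roots of $t^2-(|\theta|-1)t+2\lambda$, i.e.\ exactly \eqref{a&b} (note $a(\lambda)b(\lambda)=2\lambda$). The solution analytic at $0$ is $\psi_\lambda(x)=\HG{a(\lambda)}{b(\lambda)}{\theta_1}{x}$, and by the $x\mapsto 1-x$, $\theta_1\leftrightarrow\theta_2$ symmetry the solution analytic at $1$ is $\phi_\lambda(x)=\HG{a(\lambda)}{b(\lambda)}{\theta_2}{1-x}$. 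Because $m(\{0\})=m(\{1\})=0$, the relation \eqref{m0} forces $f^{+_s}=0$ at each endpoint, and one checks that $\psi_\lambda$ (resp.\ $\phi_\lambda$) is the solution meeting this condition at $0$ (resp.\ $1$), the competing solution behaving like $x^{1-\theta_1}$ (resp.\ $(1-x)^{1-\theta_2}$) and being excluded. The classical resolvent formula then gives $G_\lambda(x,y)=w_\lambda^{-1}\psi_\lambda(x\wedge y)\phi_\lambda(x\vee y)$ with the scale-Wronskian $w_\lambda=s'(x)^{-1}\big(\psi_\lambda'(x)\phi_\lambda(x)-\psi_\lambda(x)\phi_\lambda'(x)\big)$ constant in $x$.

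The main obstacle is evaluating this Wronskian in closed form. Since $w_\lambda$ is constant I would compute it as $x\to 1$ using the hypergeometric connection formula to expand $\psi_\lambda$ about $x=1$: its singular component is $(1-x)^{1-\theta_2}\,\Gamma(\theta_1)\Gamma(\theta_2-1)/[\Gamma(a(\lambda))\Gamma(b(\lambda))]$ times an analytic factor tending to $1$. Differentiating this against $s'(x)\sim 2B(\theta_1,\theta_2)(1-x)^{-\theta_2}$, the $(1-x)^{-\theta_2}$ singularities match and cancel, and after using $\Gamma(\theta_2-1)=\Gamma(\theta_2)/(\theta_2-1)$ together with $B(\theta_1,\theta_2)=\Gamma(\theta_1)\Gamma(\theta_2)/\Gamma(|\theta|)$ one is left with the clean value $w_\lambda=\Gamma(|\theta|)/[2\Gamma(a(\lambda))\Gamma(b(\lambda))]$, which inverts to the prefactor $2\Gamma(a(\lambda))\Gamma(b(\lambda))/\Gamma(|\theta|)$ of \eqref{eq:greenwronskian}. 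The equality of all three expressions is then automatic, each computing the same $\int_0^\infty e^{-\lambda t}p_m(t,x,y)\dif t$; I expect the gamma-function bookkeeping in the Wronskian step to be the most error-prone part of the argument.
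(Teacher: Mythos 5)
Your proposal is correct and follows essentially the same route as the paper: termwise Laplace transformation of the expansions \eqref{transitiondensityWF} and \eqref{TransitionDensityCoalescentm} for the first two representations, and the classical resolvent formula $G_\lambda(x,y)=\Phi_{\lambda,-}(x\wedge y)\Phi_{\lambda,+}(x\vee y)/w_\lambda$ with $w_\lambda=\Gamma(|\theta|)/[2\Gamma(a(\lambda))\Gamma(b(\lambda))]$ for the third. Your two small deviations --- treating $n=0$ uniformly via the holding-time representation of $\{D_t=n\}$, where the paper handles $m=0$ separately by integration by parts, and extracting the Wronskian from the $x\to1$ asymptotics of $\HG{a(\lambda)}{b(\lambda)}{\theta_1}{x}$ rather than citing the precomputed identity \eqref{W(Phi-,Phi+)} --- are equivalent uses of the same hypergeometric connection formula and yield the same constants.
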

\begin{proof}
Inserting the corresponding terms from expansion \eqref{transitiondensityWF} for the transition density into the definition of $G_\lambda(x,y)$ we, regardless of the assumption $0<\theta_1,\theta_2<1$, get the following:
    \begin{align*}
        G_{\lambda}(x,y) &= \int_{0}^{\infty} e^{-\lambda t}\sum_{n=0}^{\infty}\frac{1}{\pi_n}e^{-\frac{n(n+|\theta|-1)t}{2}}R_{n}^{(\theta_1, \theta_2)}(x)R_{n}^{(\theta_1, \theta_2)}(y) \dif t.
    \end{align*}
Integrating termwise recovers \eqref{eq:greensjacobi}.   

To obtain \eqref{eq:greenproduct}, we use the alternative form \eqref{TransitionDensityCoalescentm} for $p_{m}(t,x,y)$. Using the known identities (e.g.\ \cite[p407]{gri:2006}) 
\begin{align}
    \mathbb{P}\left(\sum_{j=k}^{n}T_j < t\right) &= \mathbb{P}\left(D_t \leq k - 1 | D_{0} = n\right), \label{eq:gri:2006}\\ 
    \int_{0}^{t}q_{m}^{|\theta|}(s)\dif s = \int_0^t \mathbb{P}\left(D_s=m\right) \dif s &=\frac{2}{m(m+|\theta|-1)}\mathbb P\left(\sum_{j=m}^{\infty} T_j<t\right),\label{eq:QtSumExp}
\end{align}
for $m \geq 1$, where $T_j$ are independent exponential random variables with rates $j(j+|\theta|-1)/2$. Equation \eqref{eq:QtSumExp} directly implies for $m \geq 1$ that
\begin{align*}
    \int_{0}^{\infty}e^{-\lambda t}q_{m}^{|\theta|}(t) \dif t = \frac{2}{m(m+|\theta|-1)}\prod_{j=m}^{\infty}\mathbb{E}\left[e^{-\lambda T_j}\right] = \frac{1}{\lambda + \lambda_m}\prod_{j=m+1}^{\infty}\frac{\lambda_j}{\lambda + \lambda_j}.
\end{align*}
For $m = 0$, we observe that taking $n \rightarrow \infty$ in \eqref{eq:gri:2006} implies that $\frac{\dif}{\dif t} q^{|\theta|}_{0}(t)$ corresponds to the density of the random variable $\sum_{j=1}^{\infty}T_j$. Then we use integration by parts to observe that 
\begin{align*}
    \int_{0}^{\infty}e^{-\lambda t}q_{0}^{|\theta|}(t)dt = \frac{1}{\lambda}\left(-\int_{0}^{\infty}\frac{\dif}{\dif t}(e^{-\lambda t}q_{0}^{|\theta|}(t))dt + \int_{0}^{\infty}e^{-\lambda t}\frac{\dif}{\dif t}q_{0}^{|\theta|}(t)dt\right) = \frac{1}{\lambda}\prod_{j=1}^{\infty}\frac{\lambda_j}{\lambda + \lambda_j}.
\end{align*}
Putting all of the above together, we get \eqref{eq:greenproduct}.
To prove \eqref{eq:greenwronskian} we need to introduce a few more concepts, so it is deferred to later in this Section. 
\end{proof}

\begin{remark}
As a byproduct, the following identity, which follows from \eqref{eq:greensjacobi} and \eqref{eq:greenwronskian}, seems to be new and of independent interest for special function theory:
\begin{align*}
&\HG{a(\lambda)}{b(\lambda)}{\theta_1}{x\wedge y}\HG{a(\lambda)}{b(\lambda)}{\theta_2}{1-(x \vee y)}\\
&=\frac{\Gamma(|\theta|)}{2\Gamma(a(\lambda))\Gamma(b(\lambda))}\sum_{n=0}^{\infty}\frac{2}{2\lambda+n(n+\theta-1)}\frac{1}{\pi_n}R_{n}^{\theta_1,\theta_2}(x)R_{n}^{\theta_1,\theta_2}(y).
\end{align*}
\end{remark}

One more ingredient we require to construct and concatenate excursions is the first hitting time, $H_{y} := \inf\left\{ t>0 : X_t = y \right\}$. According to \cite[equation (9)]{PitmanYor2003}, for $\lambda \geq 0$ 
    \begin{align}\label{LaplaceForHittingTime}
            \mathbb{E}_x\left[ e^{-\lambda H_y} \right] &= \begin{cases}
            \frac{\Phi_{\lambda,-}(x)}{\Phi_{\lambda,-}(y)} & x < y, \\ \\
            \frac{\Phi_{\lambda,+}(x)}{\Phi_{\lambda,+}(y)} & x > y,
            \end{cases}
    \end{align}
where the functions $\Phi_{\lambda,\pm}$ are, respectively, the decreasing and increasing solutions to the generator eigenfunction equation
\begin{align}\label{GenEigFns}
    \mathscr{G}\Phi = \lambda \Phi,
\end{align}
subject to suitable boundary conditions which reflect the boundary behaviour of the diffusion. For more details see \cite[Section I, Number 10, p19]{BorodinSalminen}. For a diffusion with speed measure $m$, scale function $s$, and killing measure $k$, the boundary conditions at zero are given by
\begin{align*}
\lambda\Phi_{\lambda,-}(0)m(\{0\}) = \left(\Phi_{\lambda,-}\right)^{+_{s}}(0) - \Phi_{\lambda,-}(0)k(\{0\}) 
\end{align*}
where we recall that, for a function $f$, the derivative $f^{+_s}$ is as defined in \eqref{scale-derivative}.

In the case of a Wright--Fisher diffusion without killing and with $0 < \theta_1, \theta_2 < 1$, we get $k(\{0\}) = 0 = m(\{0\})$, and the boundary condition for \eqref{GenEigFns} becomes $\left(\Phi_{\lambda,-}\right)^{+_{s}}(0) = 0$. For $\Phi_{\lambda,+}(1)$ we have a similar condition, with $\left(\Phi_{\lambda,+}\right)^{-_{s}}$ defined analogously to $\left(\Phi_{\lambda,-}\right)^{+_{s}}$ as a derivative from the left:
\begin{align*}
\lambda\Phi_{\lambda,+}(1)m(\{1\}) = -\left(\Phi_{\lambda,+}\right)^{-_{s}}(1) - \Phi_{\lambda,+}(1)k(\{1\}),
\end{align*}
which again simplifies to $\left(\Phi_{\lambda,+}\right)^{-_{s}}(1) = 0$ in the Wright--Fisher case without killing and with $0 < \theta_1, \theta_2 < 1$.
Notice that the condition for $\Phi_{\lambda,-}$  depends only on the behaviour around $0$, and the condition for $\Phi_{\lambda,+}$ depends only on the behaviour around $1$. 
The killing term plays a crucial role in excursion theory, and a good reference for associated boundary conditions is \cite[p 15--17]{BorodinSalminen}.

Solving the eigenfunction equation \eqref{GenEigFns} for the Wright--Fisher generator \eqref{generatorWF} is equivalent to solving the hypergeometric differential equation for a particular parameter configuration. 
For all $\lambda\neq (|\theta|-1)^2 / 8$, the hypergeometric differential equation admits the hypergeometric function as a solution, and thus provides us with the required function $\Phi_\lambda$. 
By matching the coefficients in \eqref{GenEigFns} with the generators given by \eqref{WFGenerator} and \eqref{HypergeometricDifferentialEquation}, we find that
    \begin{align}
        \Phi_{\lambda,+}(x) &= \HG{\frac{|\theta|-1+\sqrt{(|\theta|-1)^{2}-8\lambda}}{2}}{\frac{|\theta|-1-\sqrt{(|\theta|-1)^{2}-8\lambda}}{2}}{\theta_2}{1-x}\label{Philambda+},\\
        \Phi_{\lambda,-}(x) &= \HG{\frac{|\theta|-1+\sqrt{(|\theta|-1)^{2}-8\lambda}}{2}}{\frac{|\theta|-1-\sqrt{(|\theta|-1)^{2}-8\lambda}}{2}}{\theta_1}{x},\label{Philambda-}
    \end{align}
are, respectively, the decreasing and increasing solutions to the equation \eqref{GenEigFns}.
Examples of $\Phi_{\lambda, -}$ are plotted in Figure \ref{fig:phi-}.

Observe that using \eqref{Philambda+} and \eqref{Philambda-} within \eqref{LaplaceForHittingTime}, it follows that
    \begin{align}
        \mathbb{E}_x\left[ e^{-\lambda H_y} \right] &=\begin{cases}
        \displaystyle\frac{\HG{a(\lambda)}{b(\lambda)}{\theta_1}{x}}{\HG{a(\lambda)}{b(\lambda)}{\theta_1}{y}} & x < y, \\ \\
        \displaystyle\frac{\HG{a(\lambda)}{b(\lambda)}{\theta_2}{1-x}}{\HG{a(\lambda)}{b(\lambda)}{\theta_2}{1-y}}  & x > y.
        \end{cases}\label{LaplaceForHittingTimeSimple}
    \end{align}

\begin{remark}\label{countableset}
    We emphasise that the eigenfunction solutions \eqref{Philambda+}--\eqref{Philambda-} hold for almost all $\lambda$. Contrast this property with the set of eigenfunction solutions to \eqref{GenEigFns} when there is a boundary condition for $\Phi$ at \emph{both} zero and one. In that situation, by Sturm--Liouville theory (e.g.\ \cite[Section 1.4]{DominguezDeLaIglesia} or \cite[Section I.3]{kna:2005}) the spectrum of eigenvalues for which a solution exists is countably infinite, namely $\lambda_n = -n(n+|\theta|-1)/2$, $n=0,1,\dots$. It is this countable collection of eigen-value/-function pairs which is used to construct the spectral expansion for $p(t,x,y)$ appearing in \eqref{transitiondensityWF} (see \cite{schoutens2012stochastic}, \cite[Chapter 15]{KarlinTaylor}, as well as \cite[Chapter 4]{DominguezDeLaIglesia} for a full treatment of spectral representations of the transition densities for diffusions). Also notice that, in obtaining \eqref{Philambda+}--\eqref{Philambda-}, assumption \eqref{assumption} becomes important because if it is not satisfied then functions $\Phi_{\lambda,\pm}$ may not be well defined. For details on this point see Remark \ref{2F1convergence} in Appendix \ref{SectionApendix}.
\end{remark}
\begin{remark}
    For an explanation on why the condition $\lambda\neq (|\theta|-1)^2/8$ is necessary, see Remark \ref{notalllambda} in Appendix \ref{SectionApendix}. This condition will not prove binding because we will mostly be interested in integrals over $\lambda \in [0,\infty)$, and it will turn out that these isolated points will not cause problems.
\end{remark}

Observe that in view of \eqref{hypergeoDerivative} and \eqref{Fin2} we have
\begin{align}
    \frac{ \HG{a(\lambda)}{b(\lambda)}{\theta_1}{\cdot}^+(0) }{s^+(0)}&=\lim_{y\searrow0}\frac{\lambda}{\theta_1}\HG{a(\lambda)+1}{b(\lambda)+1}{\theta_1+1}{y}\frac{y^{\theta_1}(1-y)^{\theta_2}}{2B(\theta_1,\theta_2)} \notag\\
    &=0 \label{condition0}
\end{align}
and
\begin{align*}
    \frac{ \HG{a(\lambda)}{b(\lambda)}{\theta_2}{1-\cdot}^-(1) }{s^-(1)}&=\lim_{y\nearrow1}\frac{-\lambda}{\theta_2}\HG{a(\lambda)+1}{b(\lambda)+1}{\theta_2+1}{1-y}\frac{y^{\theta_1}(1-y)^{\theta_2}}{2B(\theta_1,\theta_2)}\\
    &=0,
\end{align*}
and thus we see that $\Phi_{\lambda,\pm}$ satisfy their respective boundary conditions.

\begin{figure}[ht]
    \centering
    \includegraphics[scale=0.65]{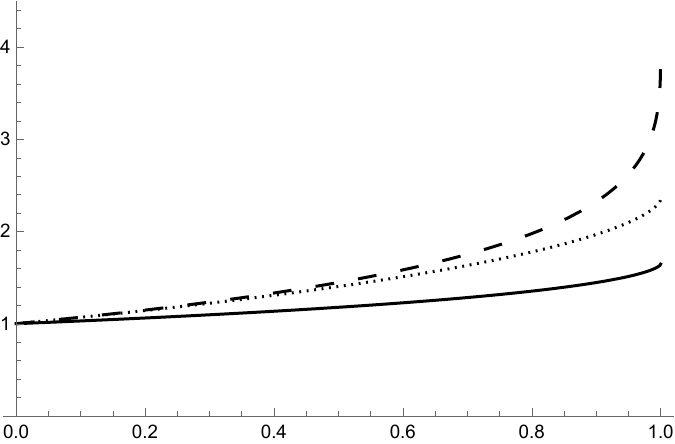}
    \includegraphics[scale=0.65]{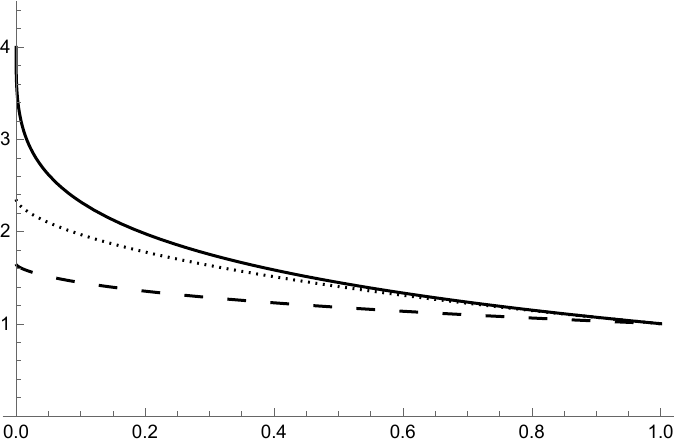}
    \caption{The graphs for $\Phi_{\lambda,-}$ on the left and for $\Phi_{\lambda,+}$ on the right are displayed for $\lambda=0.1$ and $\theta_1=\theta_2=0.3$ dotted, for $\theta_1=0.3, \theta_2=0.7$ dashed and for $\theta_1=0.7, \theta_2=0.3$ bold.}
    \label{fig:phi-}
\end{figure}
The boundary values for $\Phi_{\lambda,\pm}$ will be important in later calculations. Due to \eqref{Fin2} we have that
\begin{equation*}
    \Phi_{\lambda,-}(0)=1=\Phi_{\lambda,+}(1),
\end{equation*}
and from \eqref{Fin1} we have
\begin{equation*}
    \Phi_{\lambda,-}(1)= \frac{\Gamma(\theta_1)\Gamma(1-\theta_2)}{\Gamma\left(\frac{\theta_1-\theta_2+1-\sqrt{(|\theta|-1)^{2}-8\lambda}}{2}\right)\Gamma\left(\frac{\theta_1-\theta_2+1+\sqrt{(|\theta|-1)^{2}-8\lambda}}{2}\right)},
\end{equation*}
and
\begin{equation*}
    \Phi_{\lambda,+}(0)= \frac{\Gamma(\theta_2)\Gamma(1-\theta_1)}{\Gamma\left(\frac{\theta_2-\theta_1+1-\sqrt{(|\theta|-1)^{2}-8\lambda}}{2}\right)\Gamma\left(\frac{\theta_2-\theta_1+1+\sqrt{(|\theta|-1)^{2}-8\lambda}}{2}\right)}.
\end{equation*}

\begin{remark}
 At this juncture we comment on the stature of the results covered thus far. The Wright--Fisher diffusion is by now a classical subject, with several textbook treatments. Particularly useful are \cite{eth:2011}, \cite[Chapter 10]{EthierKurtz}, \cite{ewe:2004:I}, and \cite[Chapter 15]{KarlinTaylor}, which contain most of the details given above. Exceptions are (i) our formulae for the Green's function given in Proposition \ref{prop:Green}, and (ii) the eigenfunctions \eqref{Philambda+}--\eqref{Philambda-} and the Laplace transform \eqref{LaplaceForHittingTimeSimple}. We are not aware of expressions \eqref{eq:greensjacobi}, \eqref{eq:greenproduct}, and \eqref{eq:greenwronskian} appearing elsewhere; they complement expressions for models with an \emph{absorbing} boundary (at least one of $\theta_1 =0$, $\theta_2 = 0$ holds) where the Green's function is well known---see for example \cite[Section 3.5]{eth:2011}, \cite[Section 4.4]{ewe:2004:I}. An early reference is \cite{Ewe:1964}. On (ii), these formulae appear not to be well known in the population genetics literature, though they appear in various forms in work in neuroscience \cite{don:etal:2024, d2018jacobi, lanska1994synaptic} and mathematical finance \cite{campolieti2012}. The usefulness of \eqref{Philambda+}, \eqref{Philambda-}, and \eqref{LaplaceForHittingTimeSimple} in understanding excursions of the Wright--Fisher diffusion therefore seem not to be well explored. From here we believe all formal results to be essentially new, except where stated.
\end{remark}

We are now in a position to prove \eqref{eq:greenwronskian}.

\begin{proof}[Proof of Proposition \ref{prop:Green} representation \eqref{eq:greenwronskian}]
According to \cite[equation (37)]{PitmanYor2003}, we can re-write the Green's function as
\begin{align*}
    G_{\lambda}(x,y) &= \frac{\Phi_{\lambda,-}(x\wedge y)\Phi_{\lambda,+}(x \vee y)}{w_{\lambda}},
\end{align*}
where the \emph{Wronskian} of $\Phi_{\lambda,-}$ and $\Phi_{\lambda,+}$ is defined as
\begin{align}\label{wronskian}
    w_{\lambda} = \frac{\Phi_{\lambda,-}'(y)\Phi_{\lambda,+}(y) - \Phi_{\lambda,-}(y)\Phi_{\lambda,+}'(y)}{s'(y)},
\end{align}
which \emph{does not} depend on $y$. Using the notation $W(f,g)=f(x)g'(x)-f'(x)g(x)$ to capture the numerator of the Wronskian in \eqref{wronskian}, and according to \eqref{W(Phi-,Phi+)} we can conclude that
\begin{align*}
    w_{\lambda} ={}&\frac{\Gamma(|\theta|)}{2\Gamma(\theta_1)\Gamma(\theta_2)} \frac{-W(\Phi_{\lambda,-},\Phi_{\lambda,+})}{y^{-\theta_1}(1-y)^{-\theta_2}} = \frac{\Gamma(|\theta|)}{2\Gamma(\theta_1)\Gamma(\theta_2)}\frac{\Gamma(\theta_1-1)\Gamma(\theta_2)}{\Gamma(a(\lambda))\Gamma(b(\lambda))}\frac{(\theta_1-1)y^{-\theta_1}(1-y)^{-\theta_2}}{y^{-\theta_1}(1-y)^{-\theta_2}} \nonumber\\
    ={}&\frac{\Gamma(|\theta|)}{2\Gamma(a(\lambda))\Gamma(b(\lambda))}.
\end{align*}
Combine this expression for $w_\lambda$ with \eqref{Philambda+} and \eqref{Philambda-} to obtain \eqref{eq:greenwronskian}.
\end{proof}

The resolvent, defined by 
\begin{equation*}
    R_{\lambda}f(x) := \mathbb{E}_{x}\left[ \int_{0}^{\infty}e^{-\lambda t}f(X_t)\dif t \right] 
\end{equation*} 
is given by 
\begin{align}\label{ResolventFormula}
    R_{\lambda}f(x) = \int_{0}^{\infty}\int_{0}^{1}e^{-\lambda t}p(t,x,y)f(y) \dif y\dif t = \int_{0}^{1}G_{\lambda}(x,y)f(y)m(\dif y)
\end{align}
and in the case of Wright--Fisher diffusion we have the following relations.
\begin{proposition}
    Three equivalent formulations for the resolvent of the Wright--Fisher diffusion $R_\lambda$ for a bounded measurable $f$ are
\begin{align}
        &R_\lambda f(x)\nonumber\\ &= \sum_{n=0}^{\infty}\frac{2}{2\lambda+n(n+\theta-1)}\frac{1}{\pi_n}R_{n}^{\theta_1,\theta_2}(x)\mathbb{E}\left[R_{n}^{\theta_1,\theta_2}(Y)f(Y)\right]\label{eq:resolventbinomial} \\
        &=\sum_{n=0}^{\infty}\frac{1}{\lambda + \lambda_n}\prod_{j=n+1}^{\infty}\left(\frac{\lambda_j}{\lambda + \lambda_j}\right)\mathbb{E}[m_{K,n-K}(f)],  \label{eq:resolventbeta}\\
        &=2\frac{\Gamma(a(\lambda))\Gamma(b(\lambda))}{\Gamma(|\theta|)}\mathbb E\left[\HG{a(\lambda)}{b(\lambda)}{\theta_1}{x\wedge Y}\HG{a(\lambda)}{b(\lambda)}{\theta_2}{1-(x \vee Y)}f(Y)\right]\label{eq:resolventwronskian}
    \end{align}
    where $Y$ is a Beta$(\theta_1,\theta_2)$-distributed random variable, $K$ is Binomial$(n,x)$, and $m_{k, n-k}(f) = \int_{0}^{1}f(x)m_{k, n-k}(x)dx$.
\end{proposition}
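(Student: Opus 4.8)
The plan is to recognise that the three claimed identities are simply the three Green's function representations of Proposition~\ref{prop:Green} integrated against $f(y)\,m(\dif y)$ through~\eqref{ResolventFormula}, so that the proof reduces to substituting each expression for $G_\lambda(x,y)$ and reading off the resulting $m$-integrals as expectations under the stationary $\mathrm{Beta}(\theta_1,\theta_2)$ law. The key observation, used throughout, is that $m(\dif y)=m(y)\,\dif y$ is precisely the density of a $\mathrm{Beta}(\theta_1,\theta_2)$ variable $Y$, so that $\int_0^1 g(y)\,m(\dif y)=\mathbb{E}[g(Y)]$ for every integrable $g$. For~\eqref{eq:resolventwronskian} this already finishes the job: inserting the Wronskian form~\eqref{eq:greenwronskian} into~\eqref{ResolventFormula} and pulling the $y$-independent prefactor $2\Gamma(a(\lambda))\Gamma(b(\lambda))/\Gamma(|\theta|)$ outside the integral turns $\int_0^1 \HGalt(a(\lambda),b(\lambda);\theta_1;x\wedge y)\HGalt(a(\lambda),b(\lambda);\theta_2;1-(x\vee y))f(y)\,m(\dif y)$ directly into the stated expectation over $Y$. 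Likewise, for the spectral form~\eqref{eq:resolventbinomial}, inserting~\eqref{eq:greensjacobi} and interchanging sum and integral leaves the $y$-dependent factor $\int_0^1 R_n^{\theta_1,\theta_2}(y)f(y)\,m(\dif y)=\mathbb{E}[R_n^{\theta_1,\theta_2}(Y)f(Y)]$, which is exactly~\eqref{eq:resolventbinomial}.

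The only formula requiring genuine computation is~\eqref{eq:resolventbeta}. Substituting the product form~\eqref{eq:greenproduct} into~\eqref{ResolventFormula}, the $y$-dependent part of the $(n,k)$-term is
\[
\frac{y^k(1-y)^{n-k}}{(\theta_1)_k(\theta_2)_{n-k}/(|\theta|)_n}\,m(y),
\]
and I would show by elementary Gamma-function bookkeeping that this equals the density $m_{k,n-k}(y)$ of a $\mathrm{Beta}(\theta_1+k,\theta_2+n-k)$ law: writing every Pochhammer symbol and Beta function in terms of Gamma functions, the constant $\tfrac{(|\theta|)_n}{(\theta_1)_k(\theta_2)_{n-k}}\tfrac{1}{B(\theta_1,\theta_2)}$ collapses to $\Gamma(|\theta|+n)/[\Gamma(\theta_1+k)\Gamma(\theta_2+n-k)]=1/B(\theta_1+k,\theta_2+n-k)$, matching $m_{k,n-k}$ exactly. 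Consequently the $y$-integral of the $(n,k)$-term against $f$ is $m_{k,n-k}(f)$, and recognising the weights $\binom{n}{k}x^k(1-x)^{n-k}$ as the mass function of $K\sim\mathrm{Binomial}(n,x)$ yields $\sum_{k=0}^n\binom{n}{k}x^k(1-x)^{n-k}m_{k,n-k}(f)=\mathbb{E}[m_{K,n-K}(f)]$, which is~\eqref{eq:resolventbeta}.

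The main technical point to handle with care is justifying the interchange of summation and integration in the two series representations. For $\lambda>0$ and bounded measurable $f$ this is routine. In the product case~\eqref{eq:resolventbeta} each $m_{k,n-k}$ is a probability density, so $|m_{k,n-k}(f)|\le\|f\|_\infty$, while the nonnegative weights $w_n:=\tfrac{1}{\lambda+\lambda_n}\prod_{j>n}\tfrac{\lambda_j}{\lambda+\lambda_j}=\int_0^\infty e^{-\lambda t}q_n^{|\theta|}(t)\,\dif t$ satisfy $\sum_n w_n=\int_0^\infty e^{-\lambda t}\sum_n q_n^{|\theta|}(t)\,\dif t=1/\lambda$, since $\{q_n^{|\theta|}(t)\}$ is a probability mass function; dominated convergence then applies. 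For the spectral form~\eqref{eq:resolventbinomial} I would bound $|\mathbb{E}[R_n^{\theta_1,\theta_2}(Y)f(Y)]|\le\|f\|_\infty\,\mathbb{E}[|R_n^{\theta_1,\theta_2}(Y)|]\le\|f\|_\infty\sqrt{\pi_n}$ by Cauchy--Schwarz, using $\mathbb{E}[(R_n^{\theta_1,\theta_2}(Y))^2]=\pi_n$, and combine this with the $O(n^{-2})$ decay of the coefficients $2/(2\lambda+n(n+|\theta|-1))$ and the at most polynomial growth of $R_n^{\theta_1,\theta_2}(x)$ to produce an absolutely convergent majorant; alternatively one may simply invoke the convergence of the Green's function series already established in Proposition~\ref{prop:Green} together with boundedness of $f$ and finiteness of $m$ to license Fubini. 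This interchange, rather than any single algebraic manipulation, is the only place where a little care is needed.
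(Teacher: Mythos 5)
Your proposal is correct and follows exactly the paper's route: the paper's entire proof is the one-line observation that the result ``follows using \eqref{ResolventFormula} in conjunction with Proposition \ref{prop:Green}'', i.e.\ substituting each Green's function representation into $R_\lambda f(x)=\int_0^1 G_\lambda(x,y)f(y)\,m(\dif y)$ and reading the $m$-integrals as expectations over $Y\sim\mathrm{Beta}(\theta_1,\theta_2)$. The Gamma-function bookkeeping identifying $m_{k,n-k}$ and your justification of the sum--integral interchanges are correct details that the paper leaves implicit, so you are if anything more careful than the original.
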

\begin{proof}
The proposition follows using \eqref{ResolventFormula} in conjunction with Proposition \ref{prop:Green}.
\end{proof}

\section{Killed Wright--Fisher diffusion}\label{SectionKilled}
The next question we are interested in is what happens when we introduce killing at one of the boundary points. For this reason we define $X^y$ as a Wright--Fisher diffusion with generator \eqref{WFGenerator} started at $y\in [0,1]$. For $b\in \{0,1\}$ we will denote by $(X^{y\to b})_{t\geq0}=(X^y_{t \wedge H_b})_{t\geq0}$ the process $X^y$ stopped at the first hitting time $H_{b}$ of point $b$. We will perform calculations for killing at the $0$ endpoint; similar arguments lead to the corresponding results when killing at 1, which we also state here for completeness. When it comes to the transition density of the process $X^{x\to 0}$ killed at $0$, which we denote by $p^{0}(\cdot, \cdot, \cdot)$, a formal calculation provides the following expression:
\begin{align*}
    & p^0(t,x,y)\\
    ={}& p(t,x,y)-\mathbb{E}_x\left[p(t-H_0,X_{H_0},y)1_{\{H_0 <t\}}\right]\\
    ={}&\frac{y^{\theta_1 -1}(1-y)^{\theta_2 -1}}{B(\theta_1,\theta_2)} \sum_{n=0}^{\infty}\frac{1}{\pi_n}e^{-\frac{n(n+|\theta|-1)t}{2}}R_{n}^{(\theta_1, \theta_2)}(x)R_{n}^{(\theta_1, \theta_2)}(y)\\
    &-\frac{y^{\theta_1 -1}(1-y)^{\theta_2 -1}}{B(\theta_1,\theta_2)}\sum_{n=0}^{\infty}\frac{1}{\pi_n}e^{-\frac{n(n+|\theta|-1)t}{2}}R_{n}^{(\theta_1, \theta_2)}(0)R_{n}^{(\theta_1, \theta_2)}(y)\mathbb{E}_x\left[e^{\frac{n(n+|\theta|-1)H_0}{2}}1_{\{H_0 <t\}}\right]\\
    ={}&\frac{y^{\theta_1 -1}(1-y)^{\theta_2 -1}}{B(\theta_1,\theta_2)} \sum_{n=0}^{\infty}\frac{1}{\pi_n}e^{-\frac{n(n+|\theta|-1)t}{2}}R_{n}^{(\theta_1, \theta_2)}(y)\\
    &\times \left[ R_{n}^{(\theta_1, \theta_2)}(x)-\frac{\Gamma(1-\theta_1)\Gamma(\theta_2)}{\Gamma(1-\theta_1-n)\Gamma(\theta_2+n)}\mathbb{E}_x\left[e^{\frac{n(n+|\theta|-1)H_0}{2}}1_{\{H_0 <t\}}\right]\right],
\end{align*}
where we used \eqref{transitiondensityWF} and \eqref{Fin1}.
However, it is difficult to say much more from this without detailed knowledge of the distribution of $H_0$, which is not available beyond its Laplace transform.
An alternative is to follow the spectral approach in \cite[Section 15.13]{KarlinTaylor} to determine \eqref{transitiondensityWF}, with suitable boundary conditions in place to reflect the killing behaviour at $0$, as follows.

Denote by $u(t,x)=\mathbb{E}_x\left[f(X_t)\right]$ and by $u^0(t,x)=\mathbb{E}_x\left[f(X^{x\to 0}_t)\right]$. If we consider the spectral decomposition of the Wright--Fisher diffusion, an expression for $u$ follows from solving the Kolmogorov backward equation
\begin{equation}\label{PDE}
    \frac{\partial u}{\partial t}(t,x) = \mathscr{G}u(t,x),
\end{equation}
where $\mathscr{G}$ is given in \eqref{WFGenerator}. As outlined in Remark \ref{countableset}, there exists a countable set of eigenvalues of the form $\lambda_n = -n(n+|\theta|-1)/2$, $n=0,1,\dots$, and it follows that 
\begin{equation*}
    u(t,x)=\sum_{n=1}^\infty c_n(f) e^{-\frac{n(n+|\theta|-1)}{2}t}R_n^{(\theta_1,\theta_2)}(x),
\end{equation*}
where $R_n^{(\theta_1,\theta_2)}$ are Jacobi polynomials linearly transformed to have domain $[0,1]$. The constants $c_n(f)$ need to be chosen such that the initial condition $u(0,x) = f(x)= \sum_{n=1}^\infty c_n(f) R_n^{(\theta_1,\theta_2)}(x)$ is satisfied. Since linearly transformed Jacobi polynomials are orthogonal with the respect to the scalar product $\langle f,g \rangle=\int_0^1 f(x)g(x)m(x)\dif x$ \cite{GriffithsSpano2010}, it follows that 
\begin{equation*}
    c_n(f)=\frac{\langle f,R_n^{(\theta_1,\theta_2)} \rangle}{\langle R_n^{(\theta_1,\theta_2)},R_n^{(\theta_1,\theta_2)} \rangle}
\end{equation*}
fulfils this requirement.
Choosing a sequence of test functions $f$ approximating a point mass at $y \in [0, 1]$ yields \eqref{transitiondensityWF}.

In the case of the Wright--Fisher diffusion with killing at $0$, the function $u^0$ still needs to satisfy \eqref{PDE} with the same initial condition $u^{0}(0, x) = f(x)$, and also the boundary condition
\begin{equation*}
    u^{0}(t,0)=0.
\end{equation*}
This boundary condition immediately precludes the Jacobi polynomials, and thus we focus instead on Jacobi functions of the second kind, denoted by $Q_n^{\theta_1, \theta_2}(x)$ \cite[Section 4.23]{Szego}. We have
\begin{equation*}
    u^0(t,x)=\sum_{n=1}^\infty c^0_n(f) e^{-\frac{n(n+|\theta|-1)}{2}t}Q_n^{\theta_1, \theta_2}(x),
\end{equation*}
where $Q_n^{\theta_1, \theta_2}$ is not a polynomial but instead 
\begin{equation*}
    Q_n^{\theta_1, \theta_2}(x)=x^{1-\theta_1}\HG{1-n-\theta_1}{n+\theta_2}{2-\theta_1}{x}.
\end{equation*}

Since Jacobi functions of the second kind are not orthogonal with respect to the same scalar product, it is not straightforward to fix constants $c^0_n(f)$ such that the initial condition is satisfied and thus it proves difficult to obtain the transition density of the killed diffusion explicitly. Instead we compute its Laplace transform, for which we can obtain an explicit expression for the appropriate resolvent $R_{\lambda}^{0}$. 

\begin{lemma}\label{lemmaresolventkilled} 
For a bounded measurable $f$ and $x\in [0,1]$, the following holds:
    \begin{align*}
        R_{\lambda}^{0}f(x)= 2\frac{\Gamma(a(\lambda))\Gamma(b(\lambda))}{\Gamma(|\theta|)}\mathbb E\Bigl[\Bigl(&\HG{a(\lambda)}{b(\lambda)}{\theta_1}{x\wedge Y}\HG{a(\lambda)}{b(\lambda)}{\theta_2}{1-(x \vee Y)}\\
        &-\frac{\Gamma(\theta_2-a(\lambda))\Gamma(\theta_2-b(\lambda))}{\Gamma(\theta_2)\Gamma(1-\theta_1)} \HG{a(\lambda)}{b(\lambda)}{\theta_2}{1-x} \\
        &\phantom{-}\times\HG{a(\lambda)}{b(\lambda)}{\theta_2}{1-Y}\Bigr)f(Y)\Big],\\
        R_{\lambda}^{1}f(x)= 2\frac{\Gamma(a(\lambda))\Gamma(b(\lambda))}{\Gamma(|\theta|)}\mathbb E\Bigl[\Bigl(&\HG{a(\lambda)}{b(\lambda)}{\theta_1}{x\wedge Y}\HG{a(\lambda)}{b(\lambda)}{\theta_2}{1-(x \vee Y)}\\
        &-\frac{\Gamma(\theta_1-a(\lambda))\Gamma(\theta_1-b(\lambda))}{\Gamma(\theta_1)\Gamma(1-\theta_2)} \HG{a(\lambda)}{b(\lambda)}{\theta_1}{x} \\
        &\phantom{-}\times\HG{a(\lambda)}{b(\lambda)}{\theta_1}{Y}\Bigr)f(Y)\Big],
    \end{align*}
    where $Y$ is a Beta$(\theta_1,\theta_2)$-distributed random variable.
\end{lemma}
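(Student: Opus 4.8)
The plan is to obtain $R_\lambda^0$ from the unkilled resolvent by a single application of the strong Markov property at the hitting time $H_0$. Interpreting the killed resolvent as $R_\lambda^0 f(x)=\mathbb{E}_x\bigl[\int_0^{H_0}e^{-\lambda t}f(X_t)\dif t\bigr]$ and splitting the unkilled resolvent at $H_0$, the contribution after $H_0$ factorises because $X_{H_0}=0$ on $\{H_0<\infty\}$, giving
\begin{equation*}
    R_\lambda f(x)=R_\lambda^0 f(x)+\mathbb{E}_x\bigl[e^{-\lambda H_0}\bigr]\,R_\lambda f(0),
\end{equation*}
and hence $R_\lambda^0 f(x)=R_\lambda f(x)-\mathbb{E}_x[e^{-\lambda H_0}]R_\lambda f(0)$. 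All three ingredients on the right are already in hand: $R_\lambda f$ in the Wronskian form \eqref{eq:resolventwronskian}, and the hitting-time transform $\mathbb{E}_x[e^{-\lambda H_0}]$ from the $x>y$ branch of \eqref{LaplaceForHittingTimeSimple} with $y=0$.

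First I would evaluate $R_\lambda f(0)$ by setting $x=0$ in \eqref{eq:resolventwronskian}: since $0\wedge Y=0$, $0\vee Y=Y$, and $\HG{a(\lambda)}{b(\lambda)}{\theta_1}{0}=1$, it collapses to
\begin{equation*}
    R_\lambda f(0)=2\frac{\Gamma(a(\lambda))\Gamma(b(\lambda))}{\Gamma(|\theta|)}\,\mathbb{E}\bigl[\HG{a(\lambda)}{b(\lambda)}{\theta_2}{1-Y}f(Y)\bigr].
\end{equation*}
From \eqref{LaplaceForHittingTimeSimple} with $y=0$ one has $\mathbb{E}_x[e^{-\lambda H_0}]=\HG{a(\lambda)}{b(\lambda)}{\theta_2}{1-x}/\Phi_{\lambda,+}(0)$, recalling $\Phi_{\lambda,+}(0)=\HG{a(\lambda)}{b(\lambda)}{\theta_2}{1}$ from \eqref{Philambda+}. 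Substituting both expressions into the decomposition, the factor $\HG{a(\lambda)}{b(\lambda)}{\theta_2}{1-x}$ is independent of $Y$ and may be brought inside the expectation; pulling out the common prefactor $2\Gamma(a(\lambda))\Gamma(b(\lambda))/\Gamma(|\theta|)$ then leaves the subtracted term $\Phi_{\lambda,+}(0)^{-1}\HG{a(\lambda)}{b(\lambda)}{\theta_2}{1-x}\HG{a(\lambda)}{b(\lambda)}{\theta_2}{1-Y}$ inside the bracket, which is exactly the claimed form up to the identification of the constant $\Phi_{\lambda,+}(0)^{-1}$.

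The only genuinely computational step, and the main obstacle, is to verify that
\begin{equation*}
    \Phi_{\lambda,+}(0)^{-1}=\frac{\Gamma(\theta_2-a(\lambda))\Gamma(\theta_2-b(\lambda))}{\Gamma(\theta_2)\Gamma(1-\theta_1)}.
\end{equation*}
For this I would use the explicit boundary value of $\Phi_{\lambda,+}$ at $0$ recorded before the lemma (obtained from \eqref{Fin1}) together with the elementary identities $\theta_2-a(\lambda)=(\theta_2-\theta_1+1-\sqrt{(|\theta|-1)^2-8\lambda})/2$ and $\theta_2-b(\lambda)=(\theta_2-\theta_1+1+\sqrt{(|\theta|-1)^2-8\lambda})/2$, which follow at once from $a(\lambda)+b(\lambda)=|\theta|-1$; these recast the two gamma arguments in the boundary-value formula precisely as $\theta_2-a(\lambda)$ and $\theta_2-b(\lambda)$. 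A useful consistency check is that at $x=0$ the bracket vanishes identically, so $R_\lambda^0 f(0)=0$, as it must since $H_0=0$ under $\mathbb{P}_0$. Finally, the expression for $R_\lambda^1$ follows by the reflection symmetry $x\mapsto 1-x$, $\theta_1\leftrightarrow\theta_2$, which interchanges the two boundary points and hence the roles of $\HG{a(\lambda)}{b(\lambda)}{\theta_1}{\cdot}$ and $\HG{a(\lambda)}{b(\lambda)}{\theta_2}{\cdot}$.
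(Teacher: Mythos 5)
Your proposal is correct and follows essentially the same route as the paper: the first-passage decomposition $R_\lambda^0 f(x)=R_\lambda f(x)-\mathbb{E}_x[e^{-\lambda H_0}]R_\lambda f(0)$ (which the paper simply cites from Rogers, while you rederive it via the strong Markov property at $H_0$), combined with \eqref{eq:resolventwronskian}, the $x>y$ branch of \eqref{LaplaceForHittingTimeSimple}, and the evaluation of $\Phi_{\lambda,+}(0)$ via \eqref{Fin1} and \eqref{Fin2}. Your only deviations are cosmetic and sound: the explicit gamma-argument check $\theta_2-a(\lambda)-b(\lambda)=1-\theta_1$, the vanishing of the bracket at $x=0$ as a sanity check, and deducing $R_\lambda^1$ from the reflection symmetry $x\mapsto 1-x$, $\theta_1\leftrightarrow\theta_2$ (valid since $a(\lambda),b(\lambda)$ depend only on $|\theta|$) rather than repeating the argument at the other endpoint as the paper does.
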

\begin{proof}
    From (1) in \cite{RogersResolvents} we have that
    \begin{equation*}
        R_{\lambda}^{0}f(x)=R_{\lambda}f(x)- \mathbb{E}_x\left[ e^{-\lambda H_0} \right]R_{\lambda}f(0).
    \end{equation*}
    We proceed by using equation \eqref{eq:resolventwronskian} for the resolvent of the process which is not killed, \eqref{LaplaceForHittingTimeSimple} for the calculation of $\mathbb{E}_x\left[ e^{-\lambda H_0} \right]$, where we use the second case since $x>0$, and in the end we use equations \eqref{Fin1} and \eqref{Fin2} for the calculation of the constants. This ends the proof of the first statement, with an analogous proof holding for the second.
\end{proof}

If one was interested in calculating the resolvent of the process killed at time $H_{0,1}=H_0\wedge H_1$, that is the process killed when it reaches either endpoint, then the same formula needs to be applied twice,
\begin{equation*}
    R_{\lambda}^{0,1}f(x)=R^0_{\lambda}f(x)- \mathbb{E}_x\left[ e^{-\lambda H_1^0} \right]R^0_{\lambda}f(1),
\end{equation*}
where $H_1^0$ is the first hitting time of $1$ for the Wright--Fisher diffusion killed at $0$. This implies the following formula
\begin{equation*}
    R_{\lambda}^{0,1}f(x)=R_{\lambda}f(x)- \mathbb{E}_x\left[ e^{-\lambda H_1}1_{H_1<H_0} \right]R_{\lambda}f(1)- \mathbb{E}_x\left[ e^{-\lambda H_0}1_{H_0<H_1} \right]R_{\lambda}f(0)
\end{equation*}
To calculate $\mathbb{E}_x\left[ e^{-\lambda H_1^0} \right]$ we can again use \eqref{LaplaceForHittingTime} but we first need to determine what the corresponding set of increasing and decreasing eigenfunctions $\Phi_{\lambda,-}$ and $\Phi_{\lambda,+}$ are for the killed process. Denote by $\Phi^{b}_{\lambda,\pm}$ the decreasing and increasing solutions to the generator eigenfunction equation 
\begin{equation*}
    \mathscr{G}\Phi = \lambda \Phi,
\end{equation*}
when the boundary conditions are set to immediate killing at the endpoint $b\in\{0,1\}$; that is the killing measure satisfies $k(\{b\}) = \infty$.
\begin{proposition}
    With $a(\lambda)$ and $b(\lambda)$ defined in \eqref{a&b}, the following hold:
    \begin{align}
        &\Phi_{\lambda,+}^{0}(x) = \HG{a(\lambda)}{b(\lambda)}{\theta_2}{1-x},\label{Philambda0+}\\
        &\Phi_{\lambda,-}^{0}(x) = x^{1-\theta_1} \HG{\theta_2-b(\lambda)}{\theta_2-a(\lambda)}{2-\theta_1}{x},\label{Philambda0-}\\
        &\Phi_{\lambda,+}^{1}(x) = (1-x)^{1-\theta_2} \HG{\theta_1-a(\lambda)}{\theta_1-b(\lambda)}{2-\theta_2}{1-x},\label{Philambda1+}\\
        &\Phi_{\lambda,-}^{1}(x) = \HG{a(\lambda)}{b(\lambda)}{\theta_1}{x}.\label{Philambda1-}
    \end{align}
\end{proposition}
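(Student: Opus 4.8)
The plan is to use the fact, already established in the matching of coefficients preceding \eqref{GenEigFns}, that the eigenfunction equation $\mathscr{G}\Phi=\lambda\Phi$ is equivalent to a hypergeometric differential equation and hence has a two-dimensional solution space. Each of the four functions is then pinned down as the (unique up to scaling) increasing or decreasing solution once the appropriate boundary condition—now reflecting immediate killing $k(\{b\})=\infty$ at the relevant endpoint—is imposed. The four identities split naturally into two in which the relevant boundary condition is unchanged from the unkilled case, and two in which killing genuinely alters it.

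For \eqref{Philambda0+} and \eqref{Philambda1-} I would invoke the locality of the boundary conditions noted after \eqref{GenEigFns}: the condition defining the increasing solution $\Phi_{\lambda,+}$ involves only the endpoint $1$, and that defining the decreasing solution $\Phi_{\lambda,-}$ only the endpoint $0$. Hence killing at $0$ does not touch the increasing solution, so $\Phi_{\lambda,+}^{0}$ solves the same equation under the same condition at $1$ as the unkilled $\Phi_{\lambda,+}$ from \eqref{Philambda+}; by uniqueness this gives \eqref{Philambda0+}. The mirror-image argument ($0\leftrightarrow1$, $\theta_1\leftrightarrow\theta_2$, $x\leftrightarrow1-x$) shows that killing at $1$ leaves the decreasing solution unchanged, yielding \eqref{Philambda1-}.

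The content is in \eqref{Philambda0-} and \eqref{Philambda1+}. The crucial point is that $k(\{0\})=\infty$ converts the boundary condition at $0$ into a Dirichlet condition. Using $m(\{0\})=0$, the general boundary relation from \cite{BorodinSalminen} recalled after \eqref{GenEigFns} reads $(\Phi_{\lambda,-}^{0})^{+_s}(0)=\Phi_{\lambda,-}^{0}(0)\,k(\{0\})$; dividing by $k(\{0\})$ and letting $k(\{0\})\to\infty$, with the scale-derivative \eqref{scale-derivative} finite, forces $\Phi_{\lambda,-}^{0}(0)=0$. It then suffices to exhibit the solution of the hypergeometric equation with parameter $c=\theta_1$ that vanishes at the origin. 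The two Frobenius solutions at $0$ are $\HG{a(\lambda)}{b(\lambda)}{\theta_1}{x}$, equal to $1$ there, and the second solution $x^{1-\theta_1}\HG{a(\lambda)-\theta_1+1}{b(\lambda)-\theta_1+1}{2-\theta_1}{x}$, which vanishes at $0$ since $1-\theta_1>0$ under \eqref{assumption}; the Dirichlet condition selects the latter. Finally, the elementary identity $a(\lambda)+b(\lambda)=|\theta|-1$ from \eqref{a&b} gives $a(\lambda)-\theta_1+1=\theta_2-b(\lambda)$ and $b(\lambda)-\theta_1+1=\theta_2-a(\lambda)$, so this second solution is precisely \eqref{Philambda0-}; the symmetric computation at the endpoint $1$ produces \eqref{Philambda1+}.

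I expect the main obstacle to be the rigorous handling of the $k(\{b\})=\infty$ limit in the boundary condition, rather than the algebra. One must verify that the right-hand scale-derivative of the second Frobenius solution is genuinely finite at $0$—a short computation with \eqref{s'} gives $(\Phi_{\lambda,-}^{0})^{+_s}(0)=(1-\theta_1)/(2B(\theta_1,\theta_2))$—so that the Dirichlet condition emerges as a true limit and not an indeterminate form, and one should confirm that the selected solutions are monotone in the sense demanded by the $\pm$ labels. Both checks are standard consequences of the Sturm--Liouville/Feller theory of regular boundaries, but they are where care is needed; once they are secured, the identification of parameters is routine.
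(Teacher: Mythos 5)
Your proposal is correct and follows essentially the same route as the paper's proof: locality of the boundary conditions at the unkilled endpoint gives \eqref{Philambda0+} and \eqref{Philambda1-}, while the Dirichlet condition $\Phi^{0}_{\lambda,-}(0)=0$ (resp.\ $\Phi^{1}_{\lambda,+}(1)=0$) arising from $k(\{b\})=\infty$ selects the second Frobenius solution in \eqref{H2F1@0} (resp.\ \eqref{H2F1@1}). The only differences are cosmetic: you derive the Dirichlet condition as a limit of the Robin condition and spell out the parameter identity $a(\lambda)-\theta_1+1=\theta_2-b(\lambda)$, where the paper simply cites \cite[p 15--17]{BorodinSalminen} and writes the reparametrised solution directly.
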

\begin{proof}
Since the boundary condition for $\Phi_{\lambda,+}^{0}$ depends only on the behaviour around $b=1$, killing the process at endpoint $b=0$ will not affect it. Therefore the boundary condition remains unchanged, $\left(\Phi_{\lambda,+}^{0}\right)^{-_{s}}(1) = 0$, and we have the formula in \eqref{Philambda0+} the same as \eqref{Philambda+}.

On the other hand, since we introduce killing at the endpoint $b=0$, $k(\{0\}) = \infty$, we have $\Phi_{\lambda,-}^0(0) = 0$ (see \cite[p 15--17]{BorodinSalminen}). The second equation in \eqref{H2F1@0} is a solution to the generator eigenfunction equation, and observe that at $0$
\begin{equation*}
    (0)^{1-\theta_1}\HG{\theta_2-b(\lambda)}{\theta_2-a(\lambda)}{2-\theta_1}{0}=0,
\end{equation*}
which leads to \eqref{Philambda0-}.

When the process is killed upon reaching the endpoint $b=1$, the boundary conditions are
\begin{equation}\label{conditionsb=1}
    \left(\Phi_{\lambda,-}^{1}\right)^{+_{s}}(0) = 0 \text{ and } \Phi_{\lambda,+}^{1}(1) = 0.
\end{equation}
Therefore, clearly by the same arguments as in \eqref{Philambda-} we arrive at \eqref{Philambda1-}. For the second condition in \eqref{conditionsb=1}, we note that the second equation in \eqref{H2F1@1} is a solution which at $1$ gives
\begin{equation*}
    (1-1)^{1-\theta_2}\HG{\theta_1-a(\lambda)}{\theta_1-b(\lambda)}{2-\theta_2}{1-1}=0,
\end{equation*}
and thus \eqref{Philambda1+} holds. 
\end{proof}
\begin{remark}\label{Phiboundary}
    The boundary values for $\Phi^{b}_{\lambda,\pm}$ will again prove important in calculations to follow. Equation \eqref{Fin1} implies that 
        \begin{align}\label{phi+00}
            \Phi_{\lambda,+}^{0}(0) &= \HG{a(\lambda)}{b(\lambda)}{\theta_2}{1}= \frac{\Gamma(\theta_2)\Gamma(1-\theta_1)}{\Gamma(\theta_2-a(\lambda))\Gamma(\theta_2-b(\lambda))},\\
        \label{phi-01}
            \Phi_{\lambda,-}^{0}(1) &= \HG{\theta_2-b(\lambda)}{\theta_2-a(\lambda)}{2-\theta_1}{1}= \frac{\Gamma(2-\theta_1)\Gamma(1-\theta_2)}{\Gamma(1-a(\lambda))\Gamma(1-b(\lambda))},\\
        \label{phi+10}
            \Phi_{\lambda,+}^{1}(0) &= \HG{\theta_1-a(\lambda)}{\theta_1-b(\lambda)}{2-\theta_2}{1}= \frac{\Gamma(2-\theta_2)\Gamma(1-\theta_1)}{\Gamma(1-b(\lambda))\Gamma(1-a(\lambda))},\\
        \label{phi-11}
            \Phi_{\lambda,-}^{1}(1) &= \HG{a(\lambda)}{b(\lambda)}{\theta_1}{1}= \frac{\Gamma(\theta_1)\Gamma(1-\theta_2)}{\Gamma(\theta_1-a(\lambda))\Gamma(\theta_1-b(\lambda))}.
        \end{align}
\end{remark}

\begin{theorem}\label{resolventtwicekilled}    For a bounded measurable $f$, parameter $\lambda \in [0,\infty)\setminus \{(|\theta|-1)^2 / 8\}$ and $x\in [0,1]$, the following holds:
    \begin{align*}
        &R_{\lambda}^{0,1}f(x)= 2\frac{\Gamma(a(\lambda))\Gamma(b(\lambda))}{\Gamma(|\theta|)}\mathbb E\Bigl[\Bigl(\HG{a(\lambda)}{b(\lambda)}{\theta_1}{x\wedge Y}\HG{a(\lambda)}{b(\lambda)}{\theta_2}{1-(x \vee Y)}\\
        &\phantom{=}-\frac{\Gamma(\theta_2-a(\lambda))\Gamma(\theta_2-b(\lambda))}{\Gamma(\theta_2)\Gamma(1-\theta_1)} \HG{a(\lambda)}{b(\lambda)}{\theta_2}{1-x}\HG{a(\lambda)}{b(\lambda)}{\theta_2}{1-Y}\\
        &\phantom{=}-\frac{\Gamma(1-a(\lambda))\Gamma(1-b(\lambda))}{\Gamma(2-\theta_1)\Gamma(1-\theta_2)} x^{1-\theta_1}\HG{\theta_2-b(\lambda)}{\theta_2-a(\lambda)}{2-\theta_1}{x}\\
        &\phantom{=-}\times \left( \HG{a(\lambda)}{b(\lambda)}{\theta_1}{ Y}-\frac{\Gamma(\theta_2-a(\lambda))\Gamma(\theta_2-b(\lambda))}{\Gamma(\theta_2)\Gamma(1-\theta_1)} \HG{a(\lambda)}{b(\lambda)}{\theta_2}{1-Y} \right)\Bigr)f(Y)\Big],
    \end{align*}
    where $Y$ is a Beta$(\theta_1,\theta_2)$-distributed random variable.
\end{theorem}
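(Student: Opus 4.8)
The plan is to build $R_\lambda^{0,1}$ from the already-established resolvent $R_\lambda^0$ of the process killed only at $0$, using the strong Markov decomposition recorded above,
\begin{equation*}
R_{\lambda}^{0,1}f(x)=R^0_{\lambda}f(x)- \mathbb{E}_x\left[ e^{-\lambda H_1^0} \right]R^0_{\lambda}f(1).
\end{equation*}
This reduces the task to three self-contained computations: reusing the closed form for $R_\lambda^0 f(x)$ from Lemma \ref{lemmaresolventkilled}, evaluating that same expression at $x=1$ to obtain $R_\lambda^0 f(1)$, and computing the Laplace transform $\mathbb{E}_x[e^{-\lambda H_1^0}]$ for the once-killed process.

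First I would compute $R_\lambda^0 f(1)$ by setting $x=1$ in Lemma \ref{lemmaresolventkilled}. Since $1\wedge Y=Y$, $1\vee Y=1$, and $1-(1\vee Y)=0$, the evaluation $\HG{a(\lambda)}{b(\lambda)}{\theta_2}{0}=1$ (from \eqref{Fin2}) collapses both products, leaving
\begin{equation*}
R_{\lambda}^{0}f(1)= 2\frac{\Gamma(a(\lambda))\Gamma(b(\lambda))}{\Gamma(|\theta|)}\mathbb E\left[\left(\HG{a(\lambda)}{b(\lambda)}{\theta_1}{Y}-\frac{\Gamma(\theta_2-a(\lambda))\Gamma(\theta_2-b(\lambda))}{\Gamma(\theta_2)\Gamma(1-\theta_1)} \HG{a(\lambda)}{b(\lambda)}{\theta_2}{1-Y}\right)f(Y)\right],
\end{equation*}
which is precisely the bracketed factor appearing in the last line of the theorem.

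The key step, and the one requiring the most care, is the Laplace transform $\mathbb{E}_x[e^{-\lambda H_1^0}]$. Here $H_1^0$ is the hitting time of $1$ for the diffusion killed at $0$, so $\mathbb{E}_x[e^{-\lambda H_1^0}]=\mathbb{E}_x[e^{-\lambda H_1}1_{\{H_1<H_0\}}]$: only paths reaching $1$ before being killed at $0$ contribute. The hitting-time formula \eqref{LaplaceForHittingTime} transfers verbatim to this killed diffusion provided one uses the eigenfunctions of the killed generator in place of the unkilled ones; since we start at $x<1$ and move up to $1$, the relevant object is the increasing solution $\Phi^0_{\lambda,-}$ satisfying the killing boundary condition at $0$, already identified in \eqref{Philambda0-}. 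Thus $\mathbb{E}_x[e^{-\lambda H_1^0}]=\Phi^0_{\lambda,-}(x)/\Phi^0_{\lambda,-}(1)$, and inserting \eqref{Philambda0-} together with the boundary value \eqref{phi-01} gives
\begin{equation*}
\mathbb{E}_x\left[ e^{-\lambda H_1^0} \right]=\frac{\Gamma(1-a(\lambda))\Gamma(1-b(\lambda))}{\Gamma(2-\theta_1)\Gamma(1-\theta_2)}\,x^{1-\theta_1}\HG{\theta_2-b(\lambda)}{\theta_2-a(\lambda)}{2-\theta_1}{x}.
\end{equation*}
Justifying that the Pitman--Yor representation applies to the killed process---that the increasing $\lambda$-harmonic function vanishing at $0$ is the correct choice, and that the boundary contribution on $\{H_0<H_1\}$ indeed drops out---is the main obstacle; it follows from the same potential-theoretic argument underlying \eqref{LaplaceForHittingTime}, now run on the generator with $k(\{0\})=\infty$ whose eigenfunctions were computed in the preceding Proposition.

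Finally I would substitute these three ingredients into the decomposition. The product $\mathbb{E}_x[e^{-\lambda H_1^0}]\,R_\lambda^0 f(1)$ shares the prefactor $2\Gamma(a(\lambda))\Gamma(b(\lambda))/\Gamma(|\theta|)$ with $R_\lambda^0 f(x)$, so after factoring it out the three contributions assemble inside a single expectation against $f(Y)$, yielding exactly the three-line expression claimed. The restriction $\lambda\in[0,\infty)\setminus\{(|\theta|-1)^2/8\}$ is inherited from the domain on which $\Phi_{\lambda,\pm}$ and $\Phi^0_{\lambda,\pm}$ are well defined.
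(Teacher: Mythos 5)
Your proposal is correct and follows essentially the same route as the paper's proof: the same decomposition $R_{\lambda}^{0,1}f(x)=R^0_{\lambda}f(x)-\mathbb{E}_x[e^{-\lambda H_1^0}]R^0_{\lambda}f(1)$, evaluation of Lemma \ref{lemmaresolventkilled} at $x=1$ via \eqref{Fin2}, and the identification $\mathbb{E}_x[e^{-\lambda H_1^0}]=\Phi^0_{\lambda,-}(x)/\Phi^0_{\lambda,-}(1)$ using \eqref{LaplaceForHittingTime} with the killed eigenfunction \eqref{Philambda0-} and its boundary value \eqref{phi-01}. The only difference is cosmetic: you flag the applicability of the Pitman--Yor representation to the killed process as a point needing justification, which the paper takes for granted by choosing the eigenfunctions with killing boundary condition $k(\{0\})=\infty$.
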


\begin{proof}
    Recall that \begin{equation*}
    R_{\lambda}^{0,1}f(x)=R^0_{\lambda}f(x)- \mathbb{E}_x\left[ e^{-\lambda H_1^0} \right]R^0_{\lambda}f(1).
\end{equation*}
    From Lemma \ref{lemmaresolventkilled} we get
    \begin{align*}
        R_{\lambda}^{0}f(1)= 2\frac{\Gamma(a(\lambda))\Gamma(b(\lambda))}{\Gamma(|\theta|)}\mathbb E\Bigl[&\Bigl(\HG{a(\lambda)}{b(\lambda)}{\theta_1}{ Y}\\
        &-\frac{\Gamma(\theta_2-a(\lambda))\Gamma(\theta_2-b(\lambda))}{\Gamma(\theta_2)\Gamma(1-\theta_1)} \HG{a(\lambda)}{b(\lambda)}{\theta_2}{1-Y}\Bigr)f(Y)\Big], 
    \end{align*}
    and observe that since $H_1^0$ is the first hitting time of $1$ for the Wright--Fisher diffusion killed at $0$, the first case in \eqref{LaplaceForHittingTime} gives
    \begin{align*}
        \mathbb{E}_x\left[ e^{-\lambda H_1^0} \right]= {}&\frac{\Phi_{\lambda,-}^0(x)}{\Phi_{\lambda,-}^0(1)}=\frac{x^{1-\theta_1}\HG{\theta_2-b(\lambda)}{\theta_2-a(\lambda)}{2-\theta_1}{x}}{\HG{\theta_2-b(\lambda)}{\theta_2-a(\lambda)}{2-\theta_1}{1}}\nonumber\\
        ={}&\frac{\Gamma(1-a(\lambda))\Gamma(1-b(\lambda))}{\Gamma(2-\theta_1)\Gamma(1-\theta_2)} x^{1-\theta_1}\HG{\theta_2-b(\lambda)}{\theta_2-a(\lambda)}{2-\theta_1}{x},
    \end{align*}
    where we used formula \eqref{Philambda0-} for $\Phi_{\lambda,-}^{0}$ and \eqref{Fin2} for the constant. Putting all of the above together gives the result.
\end{proof}

\begin{remark}\label{rem3.5}
    Note that we could have reversed the order of killing, by first killing the process at $1$ and subsequently at $0$, and we would end up with the same formula. In this case we have
    \begin{align*}
        R_{\lambda}^{0,1}f(x)=R_{\lambda}^{1,0}f(x)=R^1_{\lambda}f(x)- \mathbb{E}_x\left[ e^{-\lambda H_0^1} \right]R^1_{\lambda}f(0),
    \end{align*}
    where $H_0^1$ is the first hitting time of $0$ for the Wright--Fisher diffusion killed at $1$. Lemma \ref{lemmaresolventkilled} gives an expression for $R^1_{\lambda}f$ whilst \eqref{LaplaceForHittingTime} leads to
    \begin{align*}
        \mathbb{E}_x\left[ e^{-\lambda H_0^1} \right]&= \frac{\Phi_{\lambda,+}^1(x)}{\Phi_{\lambda,+}^1(0)}\\
        &= \frac{\Gamma(1-a(\lambda))\Gamma(1-b(\lambda))}{\Gamma(2-\theta_2)\Gamma(1-\theta_1)} (1-x)^{1-\theta_2} \HG{\theta_1-a(\lambda)}{\theta_1-b(\lambda)}{2-\theta_2}{1-x},
    \end{align*}
    which implies that
    \begin{align}\label{R10}
        &R_{\lambda}^{0,1}f(x)= 2\frac{\Gamma(a(\lambda))\Gamma(b(\lambda))}{\Gamma(|\theta|)}\mathbb E\Bigl[\Bigl(\HG{a(\lambda)}{b(\lambda)}{\theta_1}{x\wedge Y}\HG{a(\lambda)}{b(\lambda)}{\theta_2}{1-(x \vee Y)} \notag\\
        &\phantom{=}- \frac{\Gamma(\theta_1-a(\lambda))\Gamma(\theta_1-b(\lambda))}{\Gamma(\theta_1)\Gamma(1-\theta_2)} \HG{a(\lambda)}{b(\lambda)}{\theta_1}{x}\HG{a(\lambda)}{b(\lambda)}{\theta_1}{Y} \notag \\
        &\phantom{=}- \frac{\Gamma(1-a(\lambda))\Gamma(1-b(\lambda))}{\Gamma(2-\theta_2)\Gamma(1-\theta_1)} (1-x)^{1-\theta_2}\HG{\theta_1-b(\lambda)}{\theta_1-a(\lambda)}{2-\theta_2}{1-x} \notag\\
        &\phantom{= -}\times \left(\HG{a(\lambda)}{b(\lambda)}{\theta_2}{1-Y} -\frac{\Gamma(\theta_1-a(\lambda))\Gamma(\theta_1-b(\lambda))}{\Gamma(\theta_1)\Gamma(1-\theta_2)}\HG{a(\lambda)}{b(\lambda)}{\theta_1}{Y} \right)\Bigr)f(Y)\Big].
    \end{align}
    Comparing the two formulas for $R_{\lambda}^{0,1}$, it is clear that \eqref{R10} follows from Theorem \ref{resolventtwicekilled} by swapping $0$ with $1$ (and therefore $Y$ with $1-Y$), and $\theta_1$ with $\theta_2$. This is equivalent to the observation that for a Wright--Fisher diffusion $\Tilde{X}$ with mutation coefficients $(\theta_2, \theta_1)$ (i.e.\ the mutation coefficients of $X$ interchanged), the distribution of $1-\Tilde{X}$ is equal to that of $X$.
\end{remark}

\subsection{Estimates of the killed resolvent} In this subsection we derive some estimates for the resolvent of the process killed at its first hitting time of $\{0,1\}$ as the starting point tends to $0$ or $1$, respectively. These are an interesting consequence of the identities obtained above and will be useful in understanding the excursion measure presented in the next section. 

\begin{proposition}\label{killed-est}
For any continuous and bounded test function $f$, we have the following limits
\begin{equation}\label{RK1lim}
\begin{split}
\lim_{x\to 0+}\frac{R^{0,1}_{\lambda} f(x)}{\mathbb{P}_x\left(H_1<H_0\right)}&=\left[\frac{\Gamma(1-a(\lambda))\Gamma(1-b(\lambda))}{\Gamma(2-|\theta|)}\frac{\Gamma(\theta_1)}{\Gamma(\theta_1-b(\lambda))\Gamma(b(\lambda))}\right]\\
&\ \times\left[R_{\lambda}f(0)\frac{\Gamma(1-\theta_2)\Gamma(b(\lambda))}{\Gamma(1-\theta_2+b(\lambda))}-R_\lambda f(1)\frac{\Gamma(\theta_1-b(\lambda))\Gamma(b(\lambda))}{\Gamma(\theta_1)}\right];
\end{split}
\end{equation}
and
\begin{equation}\label{RK2lim}
\begin{split}
\lim_{x\to 1-}\frac{R^{0,1}_{\lambda} f(x)}{\mathbb{P}_x\left(H_0<H_1\right)}&=\left[\frac{\Gamma(1-a(\lambda))\Gamma(1-b(\lambda))}{\Gamma(2-|\theta|)}\frac{\Gamma(\theta_2)}{\Gamma(\theta_2-b(\lambda))\Gamma(b(\lambda))}\right]\\
&\ \times\left[R_{\lambda}f(1)\frac{\Gamma(1-\theta_1)\Gamma(b(\lambda))}{\Gamma(1-\theta_1+b(\lambda))}-R_\lambda f(0)\frac{\Gamma(\theta_2-b(\lambda))\Gamma(b(\lambda))}{\Gamma(\theta_2)}\right].
\end{split}
\end{equation}
\end{proposition}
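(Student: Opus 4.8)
The plan is to read the limit as an indeterminate $0/0$ form and resolve it through matched leading-order asymptotics. Both the numerator $R^{0,1}_\lambda f(x)$ and the denominator $\mathbb{P}_x(H_1<H_0)$ tend to $0$ as $x\to 0+$, and I expect each to be of exact order $x^{1-\theta_1}$; the sought limit is then the ratio of the two $x^{1-\theta_1}$-coefficients. Accordingly I would compute these two coefficients separately and divide.

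For the denominator I would use the scale function. Since $s'$ in \eqref{s'} is integrable at both endpoints (here $0<\theta_1,\theta_2<1$ is used), the standard scale-function formula for two-sided exit gives $\mathbb{P}_x(H_1<H_0)=(s(x)-s(0))/(s(1)-s(0))$. Writing $s(x)-s(0)=\int_0^x s'(u)\,\dif u$ and using $s'(u)\sim 2B(\theta_1,\theta_2)u^{-\theta_1}$ as $u\to 0$, together with the Beta integral $s(1)-s(0)=2B(\theta_1,\theta_2)B(1-\theta_1,1-\theta_2)$, yields
\[ \mathbb{P}_x(H_1<H_0)\sim \frac{\Gamma(2-|\theta|)}{\Gamma(2-\theta_1)\Gamma(1-\theta_2)}\,x^{1-\theta_1},\qquad x\to 0+. \]

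For the numerator I would start from Theorem \ref{resolventtwicekilled}. The power $x^{1-\theta_1}$ is produced at exactly two places inside the expectation: the explicit prefactor $x^{1-\theta_1}$ multiplying the third term, and the factor $\HG{a(\lambda)}{b(\lambda)}{\theta_2}{1-x}$ in the second term, whose argument tends to $1$. For the latter I would invoke the Gauss connection formula at unit argument (Appendix \ref{SectionApendix}), which gives
\[ \HG{a(\lambda)}{b(\lambda)}{\theta_2}{1-x}=\frac{\Gamma(\theta_2)\Gamma(1-\theta_1)}{\Gamma(\theta_2-a(\lambda))\Gamma(\theta_2-b(\lambda))}+\frac{\Gamma(\theta_2)\Gamma(\theta_1-1)}{\Gamma(a(\lambda))\Gamma(b(\lambda))}\,x^{1-\theta_1}+o(x^{1-\theta_1}), \]
while the remaining $x$-dependent factors $\HG{a(\lambda)}{b(\lambda)}{\theta_1}{x}$ and $\HG{\theta_2-b(\lambda)}{\theta_2-a(\lambda)}{2-\theta_1}{x}$ are analytic at $0$ and tend to $1$. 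Substituting and using \eqref{Fin1} for the constant value, I expect the $x^0$ parts of the first and second terms to cancel exactly (matching $R^{0,1}_\lambda f(x)\to 0$), leaving the $x^{1-\theta_1}$-coefficient as a linear combination of $\mathbb{E}[\HG{a(\lambda)}{b(\lambda)}{\theta_1}{Y}f(Y)]$ and $\mathbb{E}[\HG{a(\lambda)}{b(\lambda)}{\theta_2}{1-Y}f(Y)]$. Setting $x=1$ and $x=0$ in \eqref{eq:resolventwronskian} identifies these two expectations with $\tfrac{\Gamma(|\theta|)}{2\Gamma(a(\lambda))\Gamma(b(\lambda))}R_\lambda f(1)$ and $\tfrac{\Gamma(|\theta|)}{2\Gamma(a(\lambda))\Gamma(b(\lambda))}R_\lambda f(0)$, so that $R_\lambda f(0)$ and $R_\lambda f(1)$ emerge exactly as on the right-hand side of \eqref{RK1lim}.

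To justify passing the limit inside the expectation I would split on $\{Y\ge x\}$ and $\{Y<x\}$. On $\{Y\ge x\}$ every hypergeometric factor is bounded uniformly for small $x$ and converges pointwise, so after dividing by $x^{1-\theta_1}$ dominated convergence applies against the $\textnormal{Beta}(\theta_1,\theta_2)$ law. On $\{Y<x\}$ the same cancellation of constant parts forces the integrand to be $O(y^{1-\theta_1})+O(x)$, whose integral against $m(y)\sim y^{\theta_1-1}/B(\theta_1,\theta_2)$ is $o(x^{1-\theta_1})$ and hence does not contribute. Dividing the numerator's coefficient by the denominator's, the final task is purely Gamma-function algebra: using Euler's reflection formula and the relations $a(\lambda)+b(\lambda)=|\theta|-1$, $\theta_2-a(\lambda)=1-\theta_1+b(\lambda)$ and $\theta_1-b(\lambda)=1-\theta_2+a(\lambda)$, the accumulated prefactors should collapse to the bracketed expression in \eqref{RK1lim}. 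The second limit \eqref{RK2lim} then follows with no extra work from the symmetry $0\leftrightarrow 1$, $\theta_1\leftrightarrow\theta_2$, $Y\leftrightarrow 1-Y$ recorded in Remark \ref{rem3.5}. I expect the main obstacle to be precisely this last rearrangement—checking that the asymmetric-looking target is the correct closed form of the symmetric combination thrown up by the expansion—closely followed by the care needed near $Y=0$, where several hypergeometric arguments approach $1$, to secure the interchange of limit and expectation.
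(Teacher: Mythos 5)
Your proof is correct, and it takes a genuinely different route from the paper's. The paper never expands the kernel of Theorem \ref{resolventtwicekilled} in $x$; instead it decomposes $R^{0,1}_{\lambda}f(x)$ probabilistically as $A_x+B_x-C_x+D_x$, where $A_x=R_{\lambda}f(x)-R_{\lambda}f(0)$ is killed by a generator-domain argument (via \eqref{scale-derivative} and \eqref{m0}, so no expansion of the resolvent is ever needed), $B_x$ and $C_x$ follow at once from \eqref{2-sided}--\eqref{2-sided2}, and the delicate term $D_x=R_{\lambda}f(0)\bigl[\mathbb{P}_x(H_0<H_1)-\mathbb{E}_x\bigl(e^{-\lambda H_0}1_{\{H_0<H_1\}}\bigr)\bigr]$ is handled by comparing two scalar hypergeometric functions $g,h$ of $1-x$ that agree at $x=0$, using the boundary estimate \eqref{limit2f1} on their derivatives together with L'H\^opital's rule. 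You instead expand the explicit kernel of Theorem \ref{resolventtwicekilled} directly, using the connection formula (your two-term expansion of $\HG{a(\lambda)}{b(\lambda)}{\theta_2}{1-x}$ is exactly \eqref{lineardependence1} with $c=\theta_2$, valid since $a(\lambda)-b(\lambda)\notin\mathbb{Z}$, i.e.\ $\lambda\neq(|\theta|-1)^2/8$ --- the same restriction the paper imposes), and then recognise the surviving expectations as $R_\lambda f(0)$ and $R_\lambda f(1)$ by evaluating \eqref{eq:resolventwronskian} at the endpoints. Both routes share the same denominator asymptotics via the scale function. What yours buys is directness --- the $x^{1-\theta_1}$ coefficient is read off in one pass rather than assembled from four probabilistic pieces --- at the price of interchanging limit and expectation, and you correctly isolate the real danger there: since $\mathbb{P}(Y<x)=O(x^{\theta_1})$, a crude bound on $\{Y<x\}$ fails whenever $\theta_1<1/2$, so the cancellation of constant parts on that event (leaving $O(Y^{1-\theta_1})+O(x^{1-\theta_1})$, whose integral against $m$ is $O(x)=o(x^{1-\theta_1})$) is genuinely needed, as you note. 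The paper's route avoids all measure-theoretic care by working with scalar functions of $x$, and moreover yields the limits of Corollary \ref{cor:pref00} as byproducts, which the paper reuses in Lemma \ref{f00}; your route produces the proposition but not those intermediate identities. Finally, your closing Gamma-algebra does collapse as you anticipate: the coefficient of $R_\lambda f(1)$ matches \eqref{RK1lim} immediately, while for the coefficient of $R_\lambda f(0)$, after cancelling $\Gamma(\theta_2-a(\lambda))\Gamma(\theta_2-b(\lambda))$ and applying Euler's reflection formula, the required identity reduces (using $a(\lambda)+b(\lambda)=|\theta|-1$, hence $1-\theta_2+b(\lambda)=\theta_1-a(\lambda)$ and $\theta_2-b(\lambda)=1-(\theta_1-a(\lambda))$) to the elementary trigonometric identity
\begin{equation*}
\sin\bigl(\pi a(\lambda)\bigr)\sin\bigl(\pi b(\lambda)\bigr)+\sin(\pi\theta_1)\sin(\pi\theta_2)=\sin\bigl(\pi(\theta_1-a(\lambda))\bigr)\sin\bigl(\pi(\theta_1-b(\lambda))\bigr),
\end{equation*}
which one checks by product-to-sum formulae; so the asymmetric-looking bracket in \eqref{RK1lim} is indeed the closed form of your symmetric combination, and the symmetry argument you invoke for \eqref{RK2lim} is the same one the paper uses.
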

\begin{proof} We will only give the argument for the proof of the limit in~\eqref{RK1lim}. We deduce \eqref{RK2lim} using the fact, pointed out above, that for a Wright--Fisher diffusion $\Tilde{X}$ with mutation coefficients $(\theta_2, \theta_1)$  the distribution of $1-\Tilde{X}$ is equal to that of $X$.

We start by suitably representing $\mathbb{P}_x\left(H_1<H_0\right)$. The classical solution to the two-sided exit problem for diffusions, see e.g.\ Proposition 3.2 of Chapter VII in \cite{RevuzYor}, states that 
\begin{align*}
    \mathbb{P}_x\left(H_0<H_1\right)=\frac{s(1)-s(x)}{s(1)-s(0)}=1-\mathbb{P}_x\left(H_1<H_0\right),\qquad x\in[0,1].
\end{align*} Moreover, from \eqref{s'} we know that the scale function can be written as 
\begin{align*}
    \frac{s(x)}{2B(\theta_1, \theta_2)}=\int^{x}_{0} y^{-\theta_1}(1-y)^{-\theta_2}\dif y =: B(1-\theta_1, 1-\theta_2; x),\qquad x\in[0,1];
\end{align*}
where $B(1-\theta_1, 1-\theta_2; \cdot)$ denotes the classical incomplete Beta function. The latter can be further written in terms of ${}_{2}F_1$ hypergeometric functions using the identity (8.17.7) in \cite{NIST:DLMF}. This implies in particular that 
\begin{equation}\label{2-sided}
\begin{split}
&\mathbb{P}_x\left(H_1<H_0\right)=\frac{s(x)-s(0)}{s(1)-s(0)}=\frac{B(1-\theta_1, 1-\theta_2; x)}{B(1-\theta_1, 1-\theta_2)}\\&=\frac{1}{B(1-\theta_1, 1-\theta_2)(1-\theta_1)}x^{1-\theta_1}{}_{2}F_1(1-\theta_1, \theta_2; 2-\theta_1; x),\qquad x\in[0,1].
\end{split}
\end{equation}
We recall from Theorem~\ref{resolventtwicekilled} and Remark~\ref{rem3.5} that the Laplace transform of the first exit time from $(0,1)$ from above or below is respectively given by \begin{equation}\label{2-sided1}
\begin{split}
&\mathbb{E}_x\left[e^{-\lambda H_0}1_{\{H_0<H_1\}}\right]=\mathbb{E}_x\left[e^{-\lambda H^{1}_0}\right]\\
&=\frac{\Gamma(1-a(\lambda))\Gamma(1-b(\lambda))}{\Gamma(2-\theta_2)\Gamma(1-\theta_1)}(1-x)^{1-\theta_2}{}_{2}F_1(\theta_1-a(\lambda), \theta_1-b(\lambda); 2-\theta_2; 1-x);
\end{split}
\end{equation}
and
\begin{equation}\label{2-sided2}
\begin{split}
&\mathbb{E}_x\left[e^{-\lambda H_1}1_{\{H_1<H_0\}}\right]=\mathbb{E}_x\left[e^{-\lambda H^{0}_1}\right]\\
&=\frac{\Gamma(1-a(\lambda))\Gamma(1-b(\lambda))}{\Gamma(2-\theta_1)\Gamma(1-\theta_2)}x^{1-\theta_1}{}_{2}F_1(\theta_2-b(\lambda), \theta_2-a(\lambda); 2-\theta_1; x);
\end{split}
\end{equation}
for any $x\in[0,1]$. To prove the claim in the Proposition, we express the resolvent of the process killed at either boundary  in terms of the resolvent of the process un-killed as follows:
\begin{equation*}
\begin{split}
R^{0,1}_{\lambda} f(x)= {} &\left(R_{\lambda}f(x)-R_{\lambda}f(0)\right)+R_{\lambda}f(0)\mathbb{P}_{x}(H_1<H_0)\\
&{}-R_{\lambda}f(1)\mathbb{E}_{x}\left[e^{-\lambda H_1}1_{\{H_1<H_0\}}\right]\\
&{}+R_{\lambda}f(0)\mathbb{E}_{x}\left[\left(1-e^{-\lambda H_0}\right)1_{\{H_0<H_1\}}\right]\\
=: {}& A_x+B_x-C_x+D_x, \qquad \ x\in[0,1],
\end{split}
\end{equation*}
where the above equation holds true for any measurable and bounded function $f$. We will analyse each of these four terms in turn. To analyse the behaviour of $A_x$ as $x\to 0$,  we observe that if $f$ is continuous and supported by $[0,1]$, the resolvent $R_{\lambda}f$  is in the domain of $\mathscr{G}$. By \eqref{scale-derivative} and \eqref{m0} we have 
\begin{equation*}
    \lim_{x\to0+}\frac{A_x}{\mathbb{P}_{x}(H_1<H_0)}=\lim_{x\to 0+}\frac{s(x)}{\mathbb{P}_{x}(H_1<H_0)}\times\lim_{x\to 0+}\frac{A_x}{s(x)}=0. 
\end{equation*}
Moreover, we have
\begin{align*}
    \frac{B_x}{\mathbb{P}_x(H_1<H_0)}=R_{\lambda}f(0).
\end{align*} 
Also, from \eqref{2-sided}, \eqref{2-sided1} and \eqref{2-sided2}, an easy computation shows that  
\begin{equation*}
\begin{split}
\frac{C_x}{\mathbb{P}_x(H_1<H_0)}&=\frac{R_{\lambda}f(1)\mathbb{E}_x\left[e^{-\lambda H_1}1_{\{H_1<H_0\}}\right]}{\mathbb{P}_x(H_1<H_0)}\\
&\xrightarrow[x\to 0+]{} R_{\lambda}f(1)\frac{\Gamma(1-a(\lambda))\Gamma(1-b(\lambda))}{\Gamma(2-\theta_1)\Gamma(1-\theta_2)}(1-\theta_1)B(1-\theta_1,1-\theta_2)\\
&= R_{\lambda}f(1)\frac{\Gamma(1-a(\lambda))\Gamma(1-b(\lambda))}{\Gamma(2-|\theta|)}.
\end{split}
\end{equation*} We are just left to determine the behaviour of $D_x$, which we further write 
\begin{equation*}
D_x=R_{\lambda}f(0)\widetilde{D}_x,\qquad \widetilde{D}_x:=\left[\mathbb{P}_x(H_0<H_1)-\mathbb{E}_x\left(e^{-\lambda H_0}1_{\{H_0<H_1\}}\right)\right]. 
\end{equation*}
Using the identities \eqref{2-sided} and \eqref{2-sided2} we get  
\begin{equation*}
\begin{split}
\widetilde{D}_x&=\frac{1}{(1-\theta_2)B(1-\theta_2, 1-\theta_1)}(1-x)^{1-\theta_2}{}_{2}F_1(1-\theta_2, \theta_1; 2-\theta_2; 1-x)\\
&\quad- \frac{\Gamma(1-a(\lambda))\Gamma(1-b(\lambda))}{\Gamma(2-\theta_2)\Gamma(1-\theta_1)}(1-x)^{1-\theta_2}{}_{2}F_1(\theta_1-a(\lambda), \theta_1-b(\lambda); 2-\theta_2; 1-x)\\
&=\frac{(1-x)^{1-\theta_2}\Gamma(2-|\theta|)}{\Gamma(2-\theta_2)\Gamma(1-\theta_1)}\Bigl[{}_{2}F_1(1-\theta_2, \theta_1; 2-\theta_2; 1-x) \\
&\qquad-\frac{\Gamma(1-a(\lambda))\Gamma(1-b(\lambda))}{\Gamma(2-|\theta|)}{}_{2}F_1(1-\theta_2+b(\lambda), \theta_1-b(\lambda); 2-\theta_2; 1-x)\Bigr].
\end{split}
\end{equation*}
To determine the latter difference, we point out that an application of \eqref{Fin1} gives
\begin{equation*}
{}_{2}F_1(1-\theta_2, \theta_1; 2-\theta_2; 1)=\frac{\Gamma(2-\theta_2)\Gamma(1-\theta_1)}{\Gamma(2-|\theta|)},\end{equation*}
and 
\begin{equation*}
\frac{\Gamma(1-a(\lambda))\Gamma(1-b(\lambda))}{\Gamma(2-|\theta|)}{}_{2}F_1(1-\theta_2+b(\lambda), \theta_1-b(\lambda); 2-\theta_2; 1)=\frac{\Gamma(2-\theta_2)\Gamma(1-\theta_1)}{{\Gamma(2-|\theta|)}}.\end{equation*}
It is now useful to define 
\begin{equation*}
g(x):={}_{2}F_1(1-\theta_2, \theta_1; 2-\theta_2; 1-x),
\end{equation*}
and
\begin{equation*}
h(x):=\frac{\Gamma(1-a(\lambda))\Gamma(1-b(\lambda))}{\Gamma(2-|\theta|)}{}_{2}F_1(1-\theta_2+b(\lambda), \theta_1-b(\lambda); 2-\theta_2; 1-x),
\end{equation*} for $x\in[0,1]$. 
These functions are such that 
\begin{align*}
    h(0)=\frac{\Gamma(2-\theta_2)\Gamma(1-\theta_1)}{\Gamma(2-|\theta|)}=g(0),
\end{align*}
and both are continuously differentiable on $(0,1)$. Moreover, their derivative can be written as 
\begin{equation*}
\begin{split}
g^{\prime}(z)-h^{\prime}(z)={} &-\frac{(1-\theta_2)\theta_1}{2-\theta_2}{}_{2}F_1(2-\theta_2, 1+\theta_1; 3-\theta_2; 1-z)\\
&{}+\frac{\Gamma(1-a(\lambda))\Gamma(1-b(\lambda))}{\Gamma(2-|\theta|)}\times\frac{(1-\theta_2+b(\lambda))(\theta_1-b(\lambda))}{2-\theta_2}\\
&\qquad {}\times{}_{2}F_1(2-\theta_2+b(\lambda), 1+\theta_1-b(\lambda); 3-\theta_2; 1-z).
\end{split}
\end{equation*}
Because 
\begin{align*}
    3-\theta_2-( 2-\theta_2+b(\lambda)+1+\theta_1-b(\lambda))=-\theta_1=
 3-\theta_2-( 2-\theta_2+1+\theta_1)<0,
\end{align*} 
the estimate \eqref{limit2f1} implies
\begin{equation*}
 \begin{split}
&\lim_{z\to 0} z^{\theta_1}(g^{\prime}(z)-h^{\prime}(z))\\
&=(1-\theta_2)\left[\frac{\Gamma(1-a(\lambda))\Gamma(1-b(\lambda))}{\Gamma(2-|\theta|)}\frac{\Gamma(1-\theta_2)\Gamma(\theta_1)}{\Gamma(1-\theta_2+b(\lambda))\Gamma(\theta_1-b(\lambda))}-1\right].
 \end{split}
 \end{equation*}
An application of L'H\^opital's rule gives us
\begin{equation*}
\begin{split}
\lim_{x\to0+}\frac{g(x)-h(x)}{x^{1-\theta_1}}&=\frac{1-\theta_2}{1-\theta_1}\left[\frac{\Gamma(1-a(\lambda))\Gamma(1-b(\lambda))}{\Gamma(2-|\theta|)}\frac{\Gamma(1-\theta_2)\Gamma(\theta_1)}{\Gamma(1-\theta_2+b(\lambda))\Gamma(\theta_1-b(\lambda))}-1\right].
\end{split}
\end{equation*}
and thus
\begin{equation*}
\begin{split}
&\lim_{x\to0}\frac{D_x}{\mathbb{P}_x\left(H_1<H_0\right)}\\
&=R_{\lambda}f(0)\left[\frac{\Gamma(1-a(\lambda))\Gamma(1-b(\lambda))}{\Gamma(2-|\theta|)}\frac{\Gamma(1-\theta_2)\Gamma(\theta_1)}{\Gamma(1-\theta_2+b(\lambda))\Gamma(\theta_1-b(\lambda))}-1\right].
\end{split}
\end{equation*}
We can next add the four obtained limits to conclude the proof:
\begin{equation*}
\begin{split}
&\lim_{x\to 0+}\frac{R^{0,1}_{\lambda} f(x)}{\mathbb{P}_x\left(H_1<H_0\right)}\\
&=R_{\lambda}f(0)-R_\lambda f(1)\left[\frac{\Gamma(1-a(\lambda))\Gamma(1-b(\lambda))}{\Gamma(2-|\theta|)}\right]\\
&\quad+R_{\lambda}f(0)\left[\frac{\Gamma(1-a(\lambda))\Gamma(1-b(\lambda))}{\Gamma(2-|\theta|)}\frac{\Gamma(1-\theta_2)\Gamma(\theta_1)}{\Gamma(1-\theta_2+b(\lambda))\Gamma(\theta_1-b(\lambda))}-1\right]\\
&=\left[\frac{\Gamma(1-a(\lambda))\Gamma(1-b(\lambda))}{\Gamma(2-|\theta|)}\frac{\Gamma(\theta_1)}{\Gamma(\theta_1-b(\lambda))\Gamma(b(\lambda))}\right]\\
&\qquad\times\left[R_{\lambda}f(0)\frac{\Gamma(1-\theta_2)\Gamma(b(\lambda))}{\Gamma(1-\theta_2+b(\lambda))}-R_\lambda f(1)\frac{\Gamma(\theta_1-b(\lambda))\Gamma(b(\lambda))}{\Gamma(\theta_1)}\right].
\end{split}
\end{equation*}
\end{proof}
The following Corollary follows from the above proof.
\begin{corollary}\label{cor:pref00}
\begin{equation*}
\begin{split}
&\lim_{x\to0+}\frac{\mathbb{E}_x\left[(1-e^{-\lambda H_0})1_{\{H_0<H_1\}}\right]}{\mathbb{P}_x\left(H_1<H_0\right)}\\
&=\frac{\Gamma(1-a(\lambda))\Gamma(1-b(\lambda))}{\Gamma(2-|\theta|)}\frac{\Gamma(1-\theta_2)\Gamma(\theta_1)}{\Gamma(\theta_1-a(\lambda))\Gamma(\theta_1-b(\lambda))}-1;
\end{split}
\end{equation*}
\begin{equation*}
\begin{split}
&\lim_{x\to1-}\frac{\mathbb{E}_x\left[(1-e^{-\lambda H_1})1_{\{H_1<H_0\}}\right]}{\mathbb{P}_x\left(H_0<H_1\right)}\\
&=\frac{\Gamma(1-a(\lambda))\Gamma(1-b(\lambda))}{\Gamma(2-|\theta|)}\frac{\Gamma(1-\theta_1)\Gamma(\theta_2)}{\Gamma(\theta_2-a(\lambda))\Gamma(\theta_2-b(\lambda))}-1;
\end{split}
\end{equation*}
and
\begin{equation*}
\begin{split}
\lim_{x\to0+}\frac{\mathbb{E}_x\left[e^{-\lambda H_1}1_{\{H_1<H_0\}}\right]}{\mathbb{P}_x(H_1<H_0)}= \frac{\Gamma(1-a(\lambda))\Gamma(1-b(\lambda))}{\Gamma(2-|\theta|)}=\lim_{x\to1-}\frac{\mathbb{E}_x\left[e^{-\lambda H_0}1_{\{H_0<H_1\}}\right]}{\mathbb{P}_x(H_0<H_1)}.
\end{split}
\end{equation*}
\end{corollary}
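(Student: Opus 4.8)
The plan is to recognise that all three limits are already contained—up to cosmetic rewriting—in the four-term decomposition $R^{0,1}_\lambda f(x) = A_x + B_x - C_x + D_x$ analysed in the proof of Proposition~\ref{killed-est}, so that nothing new needs to be computed beyond isolating the relevant pieces and invoking the reflection symmetry.

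First I would dispatch the two $x\to 0+$ statements. The first limit is precisely $\lim_{x\to0+}\widetilde D_x/\mathbb{P}_x(H_1<H_0)$, where $\widetilde D_x = \mathbb{E}_x[(1-e^{-\lambda H_0})1_{\{H_0<H_1\}}]$ is the factor in $D_x = R_\lambda f(0)\widetilde D_x$ from that proof; its value was shown there to be $\frac{\Gamma(1-a(\lambda))\Gamma(1-b(\lambda))}{\Gamma(2-|\theta|)}\frac{\Gamma(1-\theta_2)\Gamma(\theta_1)}{\Gamma(1-\theta_2+b(\lambda))\Gamma(\theta_1-b(\lambda))}-1$. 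To bring this into the stated form I would use the defining relation $a(\lambda)+b(\lambda)=|\theta|-1$, which yields $1-\theta_2+b(\lambda)=\theta_1-a(\lambda)$ and hence $\Gamma(1-\theta_2+b(\lambda))=\Gamma(\theta_1-a(\lambda))$. The $x\to 0+$ half of the third statement is read off from the term $C_x = R_\lambda f(1)\,\mathbb{E}_x[e^{-\lambda H_1}1_{\{H_1<H_0\}}]$: since the ratio $\mathbb{E}_x[e^{-\lambda H_1}1_{\{H_1<H_0\}}]/\mathbb{P}_x(H_1<H_0)$ does not involve $f$, cancelling the constant $R_\lambda f(1)$ from the limit of $C_x/\mathbb{P}_x(H_1<H_0)$ computed there leaves exactly $\frac{\Gamma(1-a(\lambda))\Gamma(1-b(\lambda))}{\Gamma(2-|\theta|)}$.

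The remaining $x\to1-$ statements I would obtain through the reflection symmetry used throughout this section: writing $\tilde X$ for the Wright--Fisher diffusion with mutation parameters $(\theta_2,\theta_1)$, one has $1-\tilde X\stackrel{d}{=}X$, so the boundary points and the hitting times $H_0,H_1$ are interchanged while $a(\lambda),b(\lambda)$ and $|\theta|$—being symmetric in $\theta_1,\theta_2$—are unchanged. Applying the two $x\to 0+$ identities to $\tilde X$ and translating back via $x\mapsto 1-x$ produces the second limit and the $x\to1-$ half of the third; in particular the constant $\frac{\Gamma(1-a(\lambda))\Gamma(1-b(\lambda))}{\Gamma(2-|\theta|)}$ is fixed by this symmetry, which is what forces the claimed equality of the two limits in the final display.

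The only delicate point—hardly an obstacle—is the $\Gamma$-factor bookkeeping in the first item, namely verifying that $1-\theta_2+b(\lambda)=\theta_1-a(\lambda)$ so that the two representations coincide. Everything else amounts to extracting limits already established in the proof of Proposition~\ref{killed-est} and applying the parameter swap $\theta_1\leftrightarrow\theta_2$; no analytic estimates (L'H\^opital, the asymptotics \eqref{limit2f1}, and so on) need to be redone.
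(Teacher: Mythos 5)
Your proposal is correct and matches the paper's intent exactly: the paper offers no separate proof, stating only that the corollary ``follows from the above proof,'' and your reading — extracting the $\widetilde D_x$ and $C_x$ limits from the four-term decomposition, noting the $C$-ratio is $f$-free, and transferring to $x\to1-$ via the $(\theta_1,\theta_2)\mapsto(\theta_2,\theta_1)$, $X\mapsto 1-\tilde X$ symmetry under which $a(\lambda)$, $b(\lambda)$, and $\Gamma(2-|\theta|)$ are invariant — is precisely the argument the authors have in mind. Your Gamma-factor check $1-\theta_2+b(\lambda)=\theta_1-a(\lambda)$, via $a(\lambda)+b(\lambda)=|\theta|-1$, correctly reconciles the form stated in the corollary with the form appearing in the proof of Proposition \ref{killed-est}.
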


\section{Excursions}\label{SectionExcursions}

Standard excursion theory is mostly interested in excursions which start and end at the same point \cite{Blumenthal, RogersExcursions}. Alternatively, one might look at excursions from a general set, which is also a well-understood theory albeit rather more complicated as one needs to track the start- and end-points of excursions in the set of interest (see Section 8 in \cite{RogersExcursions}, Chapter VII in \cite{Blumenthal}).
Excursions of a Wright--Fisher diffusion from the boundary points fall into the latter category, but since there are only two points in our set and the process cannot jump from one point to the other, our theory will also resemble the simpler description of excursions from a single point.
For example, a Wright--Fisher diffusion started from boundary point $0$ experiences a number of excursions away from and ending at $0$, until it enters a formally infinite excursion which is used to indicate that the process has `switched' to the other boundary point. Such excursions are of infinite length because a killing term is introduced at the opposite boundary point which allows such switches to be identified. Once this happens, a new excursion process from $1$ is started, and again the diffusion can be viewed as a collection of excursions all of which start and end at $1$, up until the time the diffusion reaches $0$. Once more the excursion process enters an infinite excursion, indicating that the diffusion was killed upon hitting the $0$ boundary point. Thus, provided $0<\theta_1, \theta_2<1$ (which ensures that the hitting times of both boundary points are almost surely finite), the sample path of a Wright--Fisher diffusion can be recovered by concatenating all its excursions, only this time we must excise from each sample path any time spent at the artificially introduced cemetery state. This is formally done in equation \eqref{eq:main-decomposition}.

We will construct the excursions of $X$ from the set $\{0,1\}$ of boundary points by taking $y\in \left(0,1\right)$, looking at the excursion away from $y$ and then letting $y\to b$, for $b\in \{0,1\}$. 

\begin{definition}
    For $y\in \left[0,1\right]$ we define by $U_y$ to be the set of all continuous paths which start at $y$, move away from it and subsequently stay at $y$ once they hit it, that is
\begin{equation}\label{DefnOfU}
    U_y := \left\{ f \in C(\mathbb{R}_{+},[0,1]) : f^{-1}([0,1]\setminus\{y\}) = \left(0,\eta\right) \textnormal{ for some } \eta\in\left(0,\infty\right] \right\},
\end{equation}
where $C(\mathbb{R}_{+},[0,1])$ is the set of all continuous functions mapping $\mathbb{R}_{+} := [0,\infty)$ into $[0,1]$, and $\eta$ can be viewed as the lifetime of the excursion.
\end{definition}

Excursions will take values in either $U_0$ or $U_1$, switching from $U_0$ to $U_1$ (and vice-versa) at the corresponding diffusion's stopping times $H_1$ (and $H_0$). As detailed above, the continuous diffusion path can be recovered from the resulting ordered list of excursions, indexed by the local time of the process at the boundary points, together with the stopping times. 

The local time of $X^y$ at $y$ is a non-decreasing, continuous process, which only increases on the set $\{t : X^y_t = y\}$.
Therefore an excursion away from $y$ started at local time $\ell$ comes before another one started at local time $\ell'$ if $\ell<\ell'$, thereby offering a natural way of indexing excursions from $y$ through the Markov local time $(L^y_t)_{t\geq0}$ at $y$. We use the It\^o--McKean normalization
\begin{equation*}
    \int_0^t g(X_u)\dif u=\int g(y) L_t^y m(\dif y)
\end{equation*}
for arbitrary non-negative Borel functions $g$. We also define the right-continuous inverse of the Markov local time process, that is $\tau_\ell^y:=\inf\{ u: L_u^y>\ell\}$. 
\begin{remark}
    Note here that the definition of the local time depends on the normalization we used in the definition of the speed measure.
\end{remark}
\begin{remark}
    One might also consider using the semimartingale local time $(\widetilde{L}^y_t)_{t\geq 0}$; that is, using the Meyer--Tanaka normalization 
    \begin{equation*}
        \int_0^t g(X_u)X_u(1-X_u)\dif u=\int g(y) \widetilde{L}_t^y \dif y
    \end{equation*}
    for arbitrary non-negative Borel functions $g$. This approach would not prove useful in our setting, since the semimartingale local time at the boundary for the Wright--Fisher diffusion is zero, that is $\widetilde{L}^b\equiv 0$ for $b\in\{0,1\}$. Although this is probably well known, we provide a proof below.
    \begin{proposition}
    The semimartingale local time at the boundary for the Wright--Fisher diffusion is identically zero, that is 
    \begin{align*}
        \widetilde{L}^{b}\equiv 0 \textnormal{ for } b \in \{0,1\}.
    \end{align*}
    \end{proposition}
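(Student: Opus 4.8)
The plan is to exploit the degeneracy of the quadratic variation of the Wright--Fisher diffusion at the two endpoints. Since $\dif\langle X\rangle_t = X_t(1-X_t)\dif t$ and the coefficient $x(1-x)$ vanishes at $0$ and $1$, the semimartingale local time---being an occupation density \emph{with respect to} $\dif\langle X\rangle$---should be unable to charge the boundary. Concretely, I would begin from the occupation-density characterisation of the semimartingale local time (e.g.\ Corollary VI.1.9 in \cite{RevuzYor}), namely that for each level $a$ and time $t$,
\[
\widetilde{L}^a_t = \lim_{\varepsilon\downarrow 0}\frac{1}{\varepsilon}\int_0^t \mathbf{1}_{[a,a+\varepsilon)}(X_u)\,X_u(1-X_u)\,\dif u \qquad \text{a.s.},
\]
which is the right-hand density version of the Meyer--Tanaka normalisation introduced above.

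Taking $a=0$, on the event $\{X_u\in[0,\varepsilon)\}$ one has the elementary pathwise bound $X_u(1-X_u)\le X_u<\varepsilon$, so that for every $\varepsilon>0$
\[
\frac{1}{\varepsilon}\int_0^t \mathbf{1}_{[0,\varepsilon)}(X_u)\,X_u(1-X_u)\,\dif u \;\le\; \int_0^t \mathbf{1}_{[0,\varepsilon)}(X_u)\,\dif u.
\]
Letting $\varepsilon\downarrow 0$, the left-hand side converges to $\widetilde{L}^0_t$, while the right-hand side converges by dominated convergence to the occupation time of the single point $0$, that is $\int_0^t \mathbf{1}_{\{0\}}(X_u)\,\dif u$. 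The decisive input is then the It\^o--McKean occupation formula applied to $g=\mathbf{1}_{\{0\}}$, which yields $\int_0^t \mathbf{1}_{\{0\}}(X_u)\,\dif u = L^0_t\,m(\{0\}) = 0$, since $m(\{0\})=0$ by \eqref{m0}. Hence $\widetilde{L}^0_t\le 0$, and as local time is non-negative this forces $\widetilde{L}^0\equiv 0$. For $a=1$ I would run the symmetric argument, approximating from the left with the windows $(1-\varepsilon,1]$ and using $X_u(1-X_u)\le 1-X_u<\varepsilon$ there, together with $m(\{1\})=0$; note that whichever one-sided or symmetric convention one adopts for $\widetilde{L}^b$, the one-sided densities at an endpoint are each bounded above by the same quantity, so the conclusion is convention-independent.

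The main point to handle with care---rather than a genuine obstacle---is the claim that the time the diffusion spends \emph{exactly} at a boundary point is zero; this is precisely where regularity of the boundary and the chosen normalisation of the speed measure enter, through $m(\{0\})=m(\{1\})=0$. Once that is in place, the remainder is a one-line bound exploiting the vanishing of the diffusion coefficient at the endpoints, so I expect the whole proof to be short.
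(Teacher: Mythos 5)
Your proof is correct, and it takes a genuinely different route from the paper's. You work directly from the occupation-density characterisation of semimartingale local time (Corollary VI.1.9 in \cite{RevuzYor}), bound the density quotient by the boundary occupation time using $X_u(1-X_u)<\varepsilon$ on the window, and then kill that occupation time via the It\^o--McKean formula together with $m(\{0\})=m(\{1\})=0$, which the paper has already established independently via \eqref{m0} --- so there is no circularity, and your handling of the one-sided conventions at the endpoints (the right-window limit at $1$ is even trivially zero since $X\leq 1$) is sound. The paper instead applies Tanaka's formula (Theorem (1.7), Chapter VI in \cite{RevuzYor}) to identify the jump of the local time in the space variable, obtaining the exact identity $\widetilde{L}^{0}_t=\theta_1\int_0^t 1_{\{0\}}(X_s)\dif s$, and then forces this to vanish by an integrability argument: the occupation formula with $g(y)=1_{(0,\infty)}(y)/[y(1-y)]$ gives $\int_0^1 \widetilde{L}^y_t/[y(1-y)]\dif y\leq t$, which is incompatible with $\widetilde{L}^0_t>0$ by right-continuity of $y\mapsto\widetilde{L}^y_t$. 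What each approach buys: yours is shorter and avoids both the semimartingale decomposition of $X$ and the c\`adl\`ag-in-space regularity of local time, at the cost of consuming $m(\{0\})=0$ as an input; the paper's is self-contained on that point --- indeed it \emph{re-derives} $m(\{0\})=0$ as a byproduct (via ergodicity), which your argument cannot do since it assumes that fact --- and its Tanaka identity additionally exhibits $\widetilde{L}^0_t$ as $\theta_1$ times the Lebesgue time at the boundary, making visible how the mutation drift enters.
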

     \begin{proof}
        We prove the result for $b=0$; the case $b=1$ is similar. We follow the argument in \cite[Proposition (1.5), Chapter XI]{RevuzYor}. Observe first that the Wright--Fisher diffusion is a semi\-martingale (in virtue of the corresponding SDE admitting a strong solution \cite{sat:1976:OJM}), and thus by a consequence of Tanaka's formula (see \cite[Theorem (1.7), Chapter VI]{RevuzYor}) we can write
        \begin{align*}
            \widetilde{L}^{0}_{t}(X) &= 2\int_{0}^{t}1_{\{0\}}(X_s)\dif V_s
            = 2\int_0^t 1_{\{0\}}(X_s)\left(-\frac{\theta_2}{2} X_s + \frac{\theta_1}{2}(1-X_s)\right)\dif s \\
            &=  \theta_1 \int_{0}^{t}1_{\{0\}}(X_s)\dif s,
        \end{align*}
        where $\dif V_s = (\theta_1(1-X_s)/2 - \theta_2 X_s/2)\dif s$ is the finite-variation component of $\dif X_s$, and where we also made use of the fact that $\widetilde{L}^{0-}_{t} = 0$ trivially. Using the Meyer--Tanaka normalisation above with $g(y) = 1_{(0,\infty)}(y)/[y(1-y)]$:
        \begin{align*}
            t \geq \int_{0}^{t}1_{(0,\infty)}(X_s) \dif s = \int_{0}^{t} g(X_s) X_s(1-X_s) \dif s = \int_{0}^{1}\frac{1}{y(1-y)}\widetilde{L}^{y}_{t}(X)\dif y.
        \end{align*}
        Now, noting that $\widetilde{L}^{y}_{t}(X)$ is a.s.-c\`adl\`ag in $y$, for the right-hand side to be integrable requires $\widetilde{L}^{0}(X) \equiv 0$.
    \end{proof}
    This proof shows that $\int_{0}^{t}1_{\{0\}}(X_s)\dif s = 0$, and since $X$ is ergodic,
    \begin{align*}
    0 = \lim_{t\to\infty}\frac{1}{t}\int_{0}^{t}1_{\{0\}}(X_s)\dif s = \int 1_{\{0\}}(y)\, m(\dif y) = m(\{0\}),
    \end{align*}
    thereby providing another verification that $m(\{0\}) = 0$.
\end{remark}

We have already stated that the sample path of a Wright--Fisher diffusion can be recovered by concatenating all its excursions. To do this formally we follow \cite[Chapter XII, Section 2]{RevuzYor}. Let $z^b$ denote the function which takes the value $b$ at every time.

\begin{definition}
The \emph{excursion process} at $b$ with respect to $X$ is the process $\xi^b = (\xi^b_\ell: \ell \geq 0)$ with values in $U_b \cup \{ z^b \}$ such that
\begin{itemize}
    \item if $\tau^b_\ell - \tau^b_{\ell-} > 0$ then $\xi^b_\ell = (\xi^b_\ell(t): t \geq 0)$ is given by 
    \begin{align*}
    \xi^b_\ell(t) = 1_{[0,\tau^b_\ell - \tau^b_{\ell-}]}(t)X_{\tau^b_{\ell-} + t};
    \end{align*}
    \item if $\tau^b_\ell - \tau^b_{\ell-} = 0$ then $\xi_\ell = z^b$.
\end{itemize}
\end{definition}
The decomposition of $X$ into its excursions at $0$ and at $1$ is therefore given by
\begin{equation} \label{eq:main-decomposition}
X_t = \sum_{\ell \leq L_t^0} \xi^0_\ell(t - \tau^0_{\ell-})1_{[0,H_1(\xi^0_\ell))}(t - \tau^0_{\ell-}) + \left[1 - \sum_{\ell \leq L_t^1} (1-\xi^1_\ell(t - \tau^1_{\ell-}))1_{[0,H_0(\xi^1_\ell))}(t - \tau^1_{\ell-})\right],
\end{equation}
where $H_1(\xi^0_\ell) = \inf\{ t > 0: \xi^0_\ell(t) = 1\}$ and $H_0(\xi^1_\ell) = \inf\{ t > 0: \xi^1_\ell(t) = 0\}$. The indicators $1_{[0,H_{1-b}(\xi^b_\ell))}$ in this decomposition achieve our goal of discarding the time spent by any excursion at its cemetery state (having hit the opposing endpoint).

Although \eqref{eq:main-decomposition} is a compact form, note that at any time $t$ there is at most one `active' summand across the two sums; another way of expressing \eqref{eq:main-decomposition} is to say
\begin{equation}\label{eq:main-decomposition2}
X_t = \xi^b_\ell(t-\tau_{\ell-}^b) \qquad \text{if }t \in [\tau^b_{\ell-},\tau^b_{\ell}],\,b\in\{0,1\},\, \ell \in [0,\infty).
\end{equation}
Because the sample path $X$ is continuous, we will not encounter a situation where the two excursion processes interfere with each other; that is, there is no time for which condition \eqref{eq:main-decomposition2} holds for \emph{both} $b=0$ and $b=1$.

The non-trivial excursions of $X$ can be recorded as a point process $\Xi^{y\to b}$ on $\mathbb{R}_+\times U_y$, such that $(\ell,\xi) \in \Xi^{y\to b}$ if at local time $\ell$ the process $X^{y\to b}$ makes an excursion $\xi$ away from $y$. The main reason that phrasing the path of $X$ in terms of its excursions is particularly insightful is the following result of It{\^o}.
\begin{theorem}[It{\^o} \cite{Ito}]
For $y \in [0,1]$ and $b\in \{0,1\}$ the excursion process $\Xi^{y\to b}$ is a Poisson point process with intensity measure $\textnormal{Leb}\otimes n^{y\to b}$, where $n^{y\to b}$ is a $\sigma$-finite measure on $U_y$ called the excursion (or characteristic) measure. Thus
\begin{enumerate}
    \item For any $A\subset U_y$ for which $n^{y\to b}(A) < \infty$, we have that 
    \begin{equation}\label{Poisson1}
        \mathbb{P}( \textnormal{no points of } \Xi^{y\to b} \textnormal{ fall in } (0,\ell)\times A ) = e^{-\ell n^{y\to b}(A)}.
    \end{equation}
    \item For disjoint $A_1,\dots,A_k \subset U_y$ with $n^{y\to b}(A_i) < \infty$ $\forall i$, if $A = \cup_{i=1}^{k} A_i$ and $T := \inf\{ \ell > 0 : \Xi^{y\to b}_\ell \in A \}$, then
    \begin{equation*}
        \mathbb{P}( \Xi^{y\to b}_T \in A_i ) = \frac{n^{y\to b}(A_i)}{n^{y\to b}(A)}.
    \end{equation*}
\end{enumerate}
\end{theorem}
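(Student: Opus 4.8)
The plan is to recognize the final statement as an instance of It\^o's classical excursion theorem and to prove it by exhibiting the regenerative structure of $X^{y\to b}$ at the regular point $y$, then reading off the Poisson point process property from the jump structure of the inverse local time. The starting observation is that the closure of the zero set $\{t : X^{y\to b}_t = y\}$ is a regenerative set whose right-continuous local-time parametrization is exactly $\tau^y_\ell = \inf\{u : L^y_u > \ell\}$. First I would verify that $(\tau^y_\ell)_{\ell \geq 0}$ is a subordinator, possibly killed: since $X^{y\to b}_{\tau^y_\ell} = y$ on $\{\tau^y_\ell < \infty\}$ and $\tau^y_\ell$ is a stopping time, the strong Markov property together with the additivity of the It\^o--McKean local time gives that $(\tau^y_{\ell + s} - \tau^y_\ell)_{s\geq 0}$ is independent of $\mathcal{F}_{\tau^y_\ell}$ and distributed as $(\tau^y_s)_{s\geq 0}$. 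The killing rate encodes precisely the terminal excursion reaching $b$: because \eqref{assumption} guarantees $H_b < \infty$ almost surely, the total local time at $y$ is finite and exponentially distributed in the local-time scale, with rate equal to the $n^{y\to b}$-mass of excursions that hit $b$.

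Next I would identify the non-trivial excursions with the jumps of $\tau^y$, enriched by their paths. Each jump $\tau^y_\ell - \tau^y_{\ell-} > 0$ corresponds to a maximal interval of constancy of $L^y$, that is, to an excursion $\xi^b_\ell \in U_y$ of that length, and conversely every non-trivial excursion arises this way. The increment-independence and -stationarity of $\tau^y$, upgraded by the strong Markov property so that each excursion path is conditionally independent of the past given its starting point $y$, shows that for disjoint local-time intervals $I_1,\dots,I_m$ and sets $A_1,\dots,A_m \subset U_y$ of finite $n^{y\to b}$-measure the counts $\#\{(\ell,\xi) \in \Xi^{y\to b} : \ell \in I_j,\ \xi \in A_j\}$ are independent Poisson variables with means $\mathrm{Leb}(I_j)\,n^{y\to b}(A_j)$. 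This is exactly the assertion that $\Xi^{y\to b}$ is a Poisson point process on $\mathbb{R}_+ \times U_y$ with intensity $\mathrm{Leb} \otimes n^{y\to b}$, and it simultaneously defines the $\sigma$-finite measure $n^{y\to b}$, whose finiteness on $\{\xi : \xi \text{ leaves a neighbourhood of } y\}$ follows from the almost-sure finiteness and right-continuity of $\tau^y$.

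Given the Poisson point process property, both displayed assertions are routine consequences of general theory. For (1), the void probability formula yields $\mathbb{P}(\text{no points in }(0,\ell)\times A) = \exp(-(\mathrm{Leb}\otimes n^{y\to b})((0,\ell)\times A)) = e^{-\ell n^{y\to b}(A)}$ whenever $n^{y\to b}(A) < \infty$. For (2), writing $A = \bigcup_{i=1}^{k} A_i$ with the $A_i$ disjoint and of finite measure, the restrictions of $\Xi^{y\to b}$ to the sets $\mathbb{R}_+ \times A_i$ are independent Poisson point processes with rates $n^{y\to b}(A_i)$; the index $T$ of the first point falling in $\mathbb{R}_+ \times A$ is then the minimum of $k$ independent exponential clocks, and the standard competing-exponentials computation gives $\mathbb{P}(\Xi^{y\to b}_T \in A_i) = n^{y\to b}(A_i)/n^{y\to b}(A)$.

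The hard part will be the first paragraph: rigorously justifying the strong Markov property at the inverse local times and the attendant regenerative (subordinator) structure, including the continuity of the excursion endpoints and the correct bookkeeping of the terminal excursion to $b$ within the It\^o--McKean normalization of $L^y$. Once regeneration is in hand, the Poisson structure and the two corollaries are formal. Since this is precisely It\^o's theorem, one may alternatively invoke \cite{Ito} directly (or the treatment in \cite[Chapter XII]{RevuzYor}), after checking that $X^{y\to b}$ meets the hypotheses---a regular diffusion on $[0,1]$ for which $y$ is regular for itself and which is absorbed at $b$ in finite time---which the boundary regularity under \eqref{assumption} ensures.
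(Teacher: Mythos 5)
Your proposal is correct, and it does more than the paper does: the paper states this result as It\^o's classical theorem and offers no proof at all beyond the citation to \cite{Ito}, so your closing fallback---invoking \cite{Ito} or \cite[Chapter XII]{RevuzYor} after checking that $y$ is regular for itself and that absorption at $b$ occurs in finite time under \eqref{assumption}---is precisely the paper's (implicit) route. Your first two paragraphs instead reconstruct the standard proof from the cited literature: regeneration of the closed level set at the regular point $y$, the inverse local time $\tau^y$ as a (killed) subordinator via the strong Markov property and additivity of $L^y$, identification of non-trivial excursions with the jumps of $\tau^y$, and the Poisson counts over disjoint local-time intervals; parts (1) and (2) then follow by void probabilities and competing exponentials, exactly as you say. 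What your route buys is transparency about the one point the paper leaves to the reader's background: the killed-subordinator bookkeeping. Note, though, that in the absorbed setting the increments of $\tau^y$ are stationary and independent only on the survival event $\{\tau^y_\ell < \infty\}$, and strictly speaking $\Xi^{y\to b}$ is a Poisson point process with intensity $\textnormal{Leb}\otimes n^{y\to b}$ under the standard convention that one observes it only up to its first point in the finite-$n^{y\to b}$-mass set of infinite-lifetime excursions (the mass the paper later computes in Corollary \ref{cor:infinite_lifetime}); statements (1) and (2) hold verbatim under this convention, and your observation that the total local time at $y$ is exponential with rate equal to that mass shows you have the bookkeeping right. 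Your sketch correctly flags the genuinely hard step (rigorous regeneration at inverse local times and the conditional independence of marked excursion paths, handled in \cite{Ito} by discrete approximation through excursions of duration exceeding $\varepsilon$), so as a blind proof outline it is faithful to the source the paper relies on.
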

\begin{remark}
Due to \eqref{Poisson1}, the distribution of the local time at which the first excursion of process $X^{y\to b}$ in the set $A$ occurs is $\textnormal{Exp}(n^{y\to b}(A))$.
\end{remark}
In view of \eqref{DefnOfU}, excursions are segments of the sample path which avoid the point $y$, and thus we can describe their behaviour by considering the transition semigroup of the process which is killed upon hitting $y$. Here we need to be somewhat careful with the notation because we are already considering the excursions of a killed process $X^{y\to b}$.
\begin{definition}
        For $b\in \{0,1\}$ and $A \in \mathcal{B}( [0, 1])$ by $_{y}P^{\to b}_{t}(x,A)$ we denote
\begin{align*}
    _{y}P^{\to b}_{t}(x,A) := \mathbb{E}_{x}\left[1_A(X^{x\to b}_t) 1_{\{t < H_y\}}\right]= \mathbb{E}_{x}\left[1_A(X_t) 1_{\{t < H_y\wedge H_b\}}\right]
\end{align*}
and the law of $(X_{t \wedge H_y\wedge H_b})_{t\geq0}$ by $_{y}\mathbb{P}^{x\to b}$ when the process is started at $x$.
\end{definition}
The behaviour of the process once it is ``inside'' the excursion, i.e.\ away from $y$ is described by $_{y}{P}^{\to b}_{t}$. We shall be interested in the cases when either $y=0$ and $b=1$, or $y=1$ and $b=0$, for which the resolvent applied to indicator functions has already been determined in \eqref{resolventtwicekilled}.
It remains to specify how the excursion ``enters'' $[0,1]\setminus \{y\}$, which is captured by the following definition of the \emph{entrance law} $n^{y\to b}_t(\cdot)$ associated with $_y P^{\to b}_{t}$. 
\begin{definition}
    For $t > 0$, $A \in \mathcal{B}( [0, 1] \setminus \{y\})$
    \begin{equation*}
        n^{y\to b}_{t}(A) := n^{y\to b}\left(\left\{ f \in U_y : f(t) \in A,\, t < \eta \right\}\right).
    \end{equation*}
    where we recall that $\eta$ denotes the lifetime of the excursion $f$.
\end{definition}
Then the following theorem (see Theorem 5 in \cite{RogersExcursions} or equation $(48.3)$ in \cite{RogersWilliams}) confirms the Markov property for the excursion process and explains how these excursions behave.
\begin{theorem}
Given $0 < t_1 < \dots < t_k$ and $A_1,\dots,A_k \in \mathcal{B}( [0, 1] \setminus \{y\})$,
\begin{align*}
    n^{y\to b}&\left(\left\{ f \in U_y : f_{t_i} \in A_i, \forall i=1,\dots,k,\, t_k < \eta \right\}\right) \\
    &\qquad{}= \int_{A_{1}} \int_{A_2} \dots \int_{A_k} n^{y\to b}_{t_1}(\dif  x_1)_{y}P^{\to b}_{t_2-t_1}(x_1,\dif x_2) \dots _{y}P^{\to b}_{t_k-t_{k-1}}(x_{k-1},\dif x_{k}).
\end{align*}
\end{theorem}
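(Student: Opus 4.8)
The plan is to establish the Markov property of the $\sigma$-finite measure $n^{y\to b}$ by induction on $k$, with the transition mechanism after the initial instant governed by the killed semigroup ${}_yP^{\to b}$. The base case $k=1$ is nothing but the definition of the entrance law, since the left-hand side reduces to $n^{y\to b}(\{f\in U_y: f_{t_1}\in A_1,\, t_1<\eta\}) = n^{y\to b}_{t_1}(A_1)$. For the inductive step I would peel off the final coordinate: assuming the identity for the first $k-1$ time points, it suffices to show that, under $n^{y\to b}$ restricted to $\{t_{k-1}<\eta\}$ and disintegrated according to the position $f_{t_{k-1}}=x_{k-1}$, the conditional mass of the event $\{f_{t_k}\in A_k,\, t_k<\eta\}$ is exactly ${}_yP^{\to b}_{t_k-t_{k-1}}(x_{k-1},A_k)$.

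The crux is therefore the one-step Markov property of the excursion measure,
\begin{equation*}
n^{y\to b}(f_s\in \dif x,\, f_u\in A,\, u<\eta) = n^{y\to b}_s(\dif x)\,{}_yP^{\to b}_{u-s}(x,A),\qquad 0<s<u.
\end{equation*}
First I would note that the restriction of $n^{y\to b}$ to $\{s<\eta\}$ is a \emph{finite} measure: its total mass is $n^{y\to b}_s([0,1]\setminus\{y\})$, which is finite for every $s>0$ because excursions of lifetime exceeding $s$ carry finite excursion mass. This finiteness legitimises the disintegration with respect to $f_s$ and makes the conditioning meaningful. Conditionally on $f_s=x$, the shifted fragment $(f_{s+r})_{r\ge 0}$, run until it returns to $y$ or reaches the killing point $b$, has the law ${}_y\mathbb{P}^{x\to b}$ of the diffusion started at $x$ and killed on hitting $\{y,b\}$; this is precisely the sub-Markovian semigroup recorded by ${}_yP^{\to b}$ in its definition, so the displayed identity follows. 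Iterating this one-step relation through the Chapman--Kolmogorov composition of the kernels ${}_yP^{\to b}_{t_{i+1}-t_i}$ then closes the induction and yields the stated finite-dimensional formula.

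The main obstacle is justifying the conditional law claim, i.e.\ that the post-$s$ fragment of a generic excursion genuinely evolves as ${}_y\mathbb{P}^{x\to b}$ under the non-probabilistic measure $n^{y\to b}$. I would derive this from the strong Markov property of $X$ together with It\^o's description of $\Xi^{y\to b}$ as a Poisson point process. Concretely, one can approximate $n^{y\to b}$ by the normalised conditional law of those excursions that first travel a distance $\varepsilon$ away from $y$: on such excursions the ordinary strong Markov property of $X$ applies at the first passage to level $\varepsilon$ and propagates the killed semigroup ${}_yP^{\to b}$ forward, and letting $\varepsilon\to 0$ transfers this structure to $n^{y\to b}$ on the event $\{s<\eta\}$, where all quantities remain finite. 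This is exactly the content abstracted in Theorem~5 of \cite{RogersExcursions} and equation~(48.3) of \cite{RogersWilliams}; the only thing to verify in our concrete setting is that ${}_yP^{\to b}$, as defined above, is the correct killed transition semigroup and that $n^{y\to b}_t$ is its matching entrance law, both of which are immediate from the definitions preceding the statement.
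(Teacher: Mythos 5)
Your proposal is correct, and it matches the paper's treatment: the paper offers no proof of its own for this statement, simply quoting it from Theorem 5 of \cite{RogersExcursions} and equation (48.3) of \cite{RogersWilliams}, and your sketch (base case from the definition of the entrance law, then the one-step Markov property of $n^{y\to b}$ obtained by disintegrating the finite restriction to $\{s<\eta\}$ and transferring the strong Markov property through the first passage to level $\varepsilon$, with $\varepsilon\to 0$) is precisely the standard argument behind those citations. Since you ultimately invoke the same references for the key step, your write-up should be read as a reconstruction of the cited proof rather than an independent one, which is exactly the level at which the paper uses the result.
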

Here, the excursion is initiated according to the entrance law $n^{y\to b}_{t}$ and propagates inside $[0, 1] \setminus \{y\}$ according to $_y P^{\to b}_{\cdot}$ until the process hits $y$.
If we can find both $n^{y\to b}_{t}$ and $_y P^{\to b}_{t}$ then we can fully characterise the excursion process. 

We will in fact determine the Laplace transform of the entrance law,
\begin{equation*}
    n^{y\to b}_{\lambda}(\dif x) := \int_{0}^{\infty}e^{-\lambda t}n^{y\to b}_{t}(\dif x)\dif t,
\end{equation*}
where, in a slight abuse of notation, $n^{y\to b}_\lambda$ will always be reserved for the Laplace transform of the entrance law $n^{y\to b}_t$. The first step towards this is the following lemma.

\begin{lemma}\label{lemmatotalmass}
    For all $\lambda \in [0,\infty)\setminus \{(|\theta|-1)^2 / 8\}$, the following hold:
    \begin{align*} 
        \lambda n^{0\to 1}_\lambda 1= {}&\frac{\Gamma(|\theta|)\Gamma(1-a(\lambda))\Gamma(1-b(\lambda))}{2\Gamma(1-\theta_1)\Gamma(\theta_2)\Gamma(\theta_1-a(\lambda))\Gamma(\theta_1-b(\lambda))},\\
        \lambda n^{1\to 0}_\lambda 1= {}&\frac{\Gamma(|\theta|)\Gamma(1-a(\lambda))\Gamma(1-b(\lambda))}{2\Gamma(\theta_1)\Gamma(1-\theta_2)\Gamma(\theta_2-a(\lambda))\Gamma(\theta_2-b(\lambda))}.
    \end{align*}
\end{lemma}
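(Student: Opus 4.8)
The plan is to sidestep the entrance law entirely and reduce $\lambda n^{0\to1}_\lambda 1$ to the killed Green's function on the diagonal, which is already encoded in Lemma~\ref{lemmaresolventkilled}. First I would unwind the definition: since $n^{0\to1}_\lambda(\dif x)=\int_0^\infty e^{-\lambda t}n^{0\to1}_t(\dif x)\,\dif t$ and, by \eqref{DefnOfU}, $n^{0\to1}_t 1=n^{0\to1}(\{f\in U_0:t<\eta\})$ for $t>0$, Tonelli's theorem gives
\[
\lambda n^{0\to1}_\lambda 1=\lambda\,n^{0\to1}\!\left(\int_0^\eta e^{-\lambda t}\,\dif t\right)=n^{0\to1}\!\left(1-e^{-\lambda\eta}\right),
\]
where $\eta$ is the excursion lifetime, infinite exactly on the excursions that reach $1$ before returning to $0$. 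By It\^o's theorem the excursion lifetimes form a Poisson point process, so (using $m(\{0\})=0$ to rule out a drift) the exponential formula identifies the right-hand side as the Laplace exponent of the inverse local time $(\tau^0_\ell)$ of the diffusion $X^{0\to1}$ killed at $1$; that is, $\mathbb{E}_0[e^{-\lambda\tau^0_\ell}1_{\{\tau^0_\ell<\infty\}}]=\exp(-\ell\,n^{0\to1}(1-e^{-\lambda\eta}))$.

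Second, I would invoke the classical identity relating this Laplace exponent to the Green's function at the diagonal. Writing $G^1_\lambda$ for the Green's function of $X$ killed at $1$ taken with respect to the speed measure $m$, the occupation formula under the It\^o--McKean normalisation gives $G^1_\lambda(0,0)=\mathbb{E}_0[\int_0^{H_1}e^{-\lambda t}\,\dif L^0_t]$; changing variables by $t=\tau^0_\ell$ turns this into $\int_0^\infty e^{-\ell\,n^{0\to1}(1-e^{-\lambda\eta})}\,\dif\ell$, whence
\[
\lambda n^{0\to1}_\lambda 1=n^{0\to1}\!\left(1-e^{-\lambda\eta}\right)=\frac{1}{G^1_\lambda(0,0)}.
\]
Crucially the normalisation of $G^1_\lambda$ here is exactly the one inherited from Lemma~\ref{lemmaresolventkilled}, since both the resolvent and the local time are taken with respect to $m$; this is also where regularity of $0$, i.e.\ assumption \eqref{assumption}, is used, guaranteeing that $L^0$ and $G^1_\lambda(0,0)$ are well defined and strictly positive.

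Third, I would read $G^1_\lambda(0,0)$ straight off Lemma~\ref{lemmaresolventkilled}. As $Y\sim\mathrm{Beta}(\theta_1,\theta_2)$ has density $m$, the expression there for $R^1_\lambda$ displays the kernel $G^1_\lambda(x,z)=\frac{2\Gamma(a)\Gamma(b)}{\Gamma(|\theta|)}k_\lambda(x,z)$, where $k_\lambda$ is the bracketed factor and $a=a(\lambda),b=b(\lambda)$. Setting $x=z=0$ and using $\HG{a}{b}{\theta_1}{0}=1$ (by \eqref{Fin2}) together with the Gauss value $\HG{a}{b}{\theta_2}{1}=\Gamma(\theta_2)\Gamma(1-\theta_1)/[\Gamma(\theta_2-a)\Gamma(\theta_2-b)]$ (by \eqref{Fin1}, cf.\ \eqref{phi+00}) yields
\[
G^1_\lambda(0,0)=\frac{2\Gamma(a)\Gamma(b)}{\Gamma(|\theta|)}\left[\frac{\Gamma(\theta_2)\Gamma(1-\theta_1)}{\Gamma(\theta_2-a)\Gamma(\theta_2-b)}-\frac{\Gamma(\theta_1-a)\Gamma(\theta_1-b)}{\Gamma(\theta_1)\Gamma(1-\theta_2)}\right].
\]
Taking the reciprocal and simplifying to the stated form is then a pure $\Gamma$-function exercise: with $a+b=|\theta|-1$ one has the reflections $\theta_2-a=1-(\theta_1-b)$ and $\theta_2-b=1-(\theta_1-a)$, and, after clearing $\Gamma$-factors via Euler reflection, the bracket collapses using
\[
\sin\pi(\theta_1-a)\sin\pi(\theta_1-b)=\sin\pi a\,\sin\pi b+\sin\pi\theta_1\sin\pi\theta_2
\]
(both sides equal $\tfrac12[\cos\pi(a-b)+\cos\pi(\theta_1-\theta_2)]$), delivering $\lambda n^{0\to1}_\lambda 1$ in the claimed form. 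The companion formula for $\lambda n^{1\to0}_\lambda 1$ then requires no further computation: by the symmetry recorded in Remark~\ref{rem3.5}, interchanging $\theta_1\leftrightarrow\theta_2$ and $0\leftrightarrow1$ sends $X$ killed at $1$ to $X$ killed at $0$, and maps the first expression to the second.

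The step I expect to be the main obstacle is the second one: rigorously transferring the inverse-local-time / reciprocal-diagonal-Green's-function identity, classical at interior regular points, to the regular \emph{boundary} point $0$, and checking that the It\^o--McKean local-time normalisation there matches the normalisation of $G^1_\lambda$ from Lemma~\ref{lemmaresolventkilled}. An alternative that avoids invoking $G^1_\lambda(0,0)$ directly is to realise $n^{0\to1}(1-e^{-\lambda\eta})$ as a renormalised $x\to0+$ limit of $\mathbb{E}_x[(1-e^{-\lambda H_0})1_{\{H_0<H_1\}}]+\mathbb{P}_x(H_1<H_0)$ and to apply Corollary~\ref{cor:pref00}, whereupon the $\mp 1$ terms cancel and the same closed form emerges; that route is attractive because Corollary~\ref{cor:pref00} sits immediately above, but it has the drawback of requiring the excursion normalising constant to be pinned down separately.
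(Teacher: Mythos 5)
Your proof is correct, but it reaches the result by a different computational route than the paper. Both arguments ultimately rest on the same identity: the paper's proof invokes the Pitman--Yor formula \eqref{ItoMcKean}, whose right-hand side is precisely $w_\lambda^b/\bigl(\Phi^b_{\lambda,-}(y)\Phi^b_{\lambda,+}(y)\bigr)=1/G^b_\lambda(y,y)$, i.e.\ the reciprocal diagonal Green's function you use. The difference is in execution. The paper works at an \emph{interior} point $y$, recognises the numerator as the Wronskian $W(\Phi^1_{\lambda,-},\Phi^1_{\lambda,+})$, evaluates it in closed form via \eqref{W(Phi1-,Phi1+)} so that the answer appears \emph{already in product form} as a constant over $\Phi^1_{\lambda,-}(y)\Phi^1_{\lambda,+}(y)$, and then lets $y\to0$ using only continuity and the boundary value \eqref{phi+10}; no trigonometric identities are needed, and the delicate boundary issue you flag is reduced to a harmless limit. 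You instead evaluate $G^1_\lambda(0,0)$ directly at the boundary from Lemma \ref{lemmaresolventkilled}, which produces a \emph{difference} of Gamma ratios, and you must then collapse it to the product form. Your collapsing identity is genuinely correct: using $a(\lambda)+b(\lambda)=|\theta|-1$ one has $\theta_2-a=1-(\theta_1-b)$ and $\theta_2-b=1-(\theta_1-a)$, and after Euler reflection the required statement is exactly
\begin{equation*}
\sin\pi(\theta_1-a)\sin\pi(\theta_1-b)=\sin\pi a\,\sin\pi b+\sin\pi\theta_1\sin\pi\theta_2,
\end{equation*}
which holds since both sides equal $\tfrac12\left[\cos\pi(a-b)+\cos\pi(\theta_1-\theta_2)\right]$ (valid for the complex-conjugate pair $a(\lambda),b(\lambda)$ when $\lambda>(|\theta|-1)^2/8$ by analytic continuation). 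Your first two steps---the Tonelli reduction $\lambda n^{0\to1}_\lambda 1=n^{0\to1}(1-e^{-\lambda\eta})$, the exponential formula with zero drift (justified by $m(\{0\})=0$), and the occupation-density identification with $G^1_\lambda(0,0)$ under the It\^o--McKean normalisation---amount to a self-contained derivation of what the paper simply cites from Pitman--Yor; that is a genuine gain in transparency, purchased at the cost of needing the identity directly at the regular boundary point (your flagged obstacle, which here is unproblematic because the $\HGalt$ kernel in Lemma \ref{lemmaresolventkilled} is continuous on $[0,1]^2$ under assumption \eqref{assumption}) plus the extra Gamma-function gymnastics. Your symmetry argument for $\lambda n^{1\to0}_\lambda 1$ via Remark \ref{rem3.5} is fine (the paper recomputes instead). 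One caution: your proposed alternative via Corollary \ref{cor:pref00} would be circular as sketched, since pinning down the normalising constant $n^{0\to1}(H_1<H_0)$ is done in Corollary \ref{cor:infinite_lifetime}, which is itself a consequence of the present lemma---you correctly sensed this drawback, so the main route should stand as your proof.
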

\begin{proof}
    By noting our choice of normalisation for the local time and using equations (16), (17) and (22) (together with the subsequent discussion) in \cite{PitmanYor2003} (see also \cite[Section 6.1]{ItoMcKean} and \cite[Remark 9.8(ii)]{PitmanYor81}), we get that
    \begin{equation}\label{ItoMcKean}
        \lambda n^{y\to b}_{\lambda}1 = \frac{(\Phi^{b}_{\lambda,-})'(y)}{s'(y)}\frac{1}{\Phi^{b}_{\lambda,-}(y)}-\frac{(\Phi^{b}_{\lambda,+})'(y)}{s'(y)}\frac{1}{\Phi^{b}_{\lambda,+}(y)}
    \end{equation}
    for $b\in \{0,1\}$. By \eqref{ItoMcKean},
    \begin{align*}
         \lambda n^{y\to 1}_\lambda 1 &= \frac{(\Phi^{1}_{\lambda,-})'(y)}{s'(y)}\frac{1}{\Phi^{1}_{\lambda,-}(y)}-\frac{(\Phi^{1}_{\lambda,+})'(y)}{s'(y)}\frac{1}{\Phi^{1}_{\lambda,+}(y)} \\
         &= \frac{1}{s'(y)} \frac{(\Phi^{1}_{\lambda,-})'(y)\Phi^{1}_{\lambda,+}(y)-(\Phi^{1}_{\lambda,+})'(y)\Phi^{1}_{\lambda,-}(y)}{\Phi^{1}_{\lambda,-}(y)\Phi^{1}_{\lambda,+}(y)} \\
         &= -\frac{\Gamma(\theta_1-1)\Gamma(2-\theta_2)}{\Gamma(\theta_1-a(\lambda))\Gamma(\theta_1-b(\lambda))}\frac{\Gamma(|\theta|)}{2\Gamma(\theta_1)\Gamma(\theta_2)}y^{\theta_1}(1-y)^{\theta_2} \frac{(1-\theta_1)y^{-\theta_1}(1-y)^{-\theta_2}}{\Phi^{1}_{\lambda,-}(y)\Phi^{1}_{\lambda,+}(y)}\\
         &=\frac{\Gamma(|\theta|)\Gamma(2-\theta_2)}{2\Gamma(\theta_2)\Gamma(\theta_1-a(\lambda))\Gamma(\theta_1-b(\lambda))} \frac{1}{\Phi^{1}_{\lambda,-}(y)\Phi^{1}_{\lambda,+}(y)}, 
    \end{align*}
    where in going from the second line to the third we used \eqref{W(Phi1-,Phi1+)} and \eqref{s'}.
    Therefore, letting $y \to 0$ and using \eqref{phi+10}, and the facts that $\Phi^{1}_{\lambda,-}(y)$ and $\Phi^{1}_{\lambda,+}(y)$ are bounded continuous functions over $[0,1]$, and that $\Phi_{\lambda,-}^{1}(0) = \HG{a(\lambda)}{b(\lambda)}{\theta_1}{0}=1$, we get
    \begin{align*}
        \lambda n^{0\to 1}_\lambda 1 = \lim_{y\rightarrow 0}\lambda n_{\lambda}^{y\to 1}1 = \frac{\Gamma(|\theta|)\Gamma(2-\theta_2)}{2\Gamma(\theta_2)\Gamma(\theta_1-a(\lambda))\Gamma(\theta_1-b(\lambda))} \frac{\Gamma(1-b(\lambda))\Gamma(1-a(\lambda))}{\Gamma(2-\theta_2)\Gamma(1-\theta_1)}.
    \end{align*}
    Similarly, we have
    \begin{align*}
         \lambda n^{y\to 0}_\lambda 1 &= \frac{(\Phi^{\to 0}_{\lambda,-})'(y)}{s'(y)}\frac{1}{\Phi^{\to 0}_{\lambda,-}(y)}-\frac{(\Phi^{\to 0}_{\lambda,+})'(y)}{s'(y)}\frac{1}{\Phi^{\to 0}_{\lambda,+}(y)} \notag \\
         &= \frac{1}{s'(y)} \frac{(\Phi^{\to 0}_{\lambda,-})'(y)\Phi^{\to 0}_{\lambda,+}(y)-(\Phi^{\to 0}_{\lambda,+})'(y)\Phi^{\to 0}_{\lambda,-}(y)}{\Phi^{\to 0}_{\lambda,-}(y)\Phi^{\to 0}_{\lambda,+}(y)} \notag\\
         &= -\frac{\Gamma(1-\theta_1)\Gamma(\theta_2)}{\Gamma(a(\lambda)-\theta_1+1)\Gamma(b(\lambda)-\theta_1+1)}\frac{\Gamma(|\theta|)}{2\Gamma(\theta_1)\Gamma(\theta_2)}y^{\theta_1}(1-y)^{\theta_2} \frac{(1-\theta_1)y^{-\theta_1}(1-y)^{-\theta_2}}{\Phi^{\to 0}_{\lambda,-}(y)\Phi^{\to 0}_{\lambda,+}(y)} \notag \\
         &= \frac{\Gamma(|\theta|)\Gamma(2-\theta_1)}{2\Gamma(\theta_1)\Gamma(\theta_2-a(\lambda))\Gamma(\theta_2-b(\lambda))} \frac{1}{\Phi^{\to 0}_{\lambda,-}(y)\Phi^{\to 0}_{\lambda,+}(y)}
    \end{align*}
    and thus it follows that
    \begin{equation*}
        \lambda n^{1\to 0}_\lambda 1 = \lim_{y\rightarrow 1}\lambda n_{\lambda}^{y\to 0}1 \frac{\Gamma(|\theta|)\Gamma(2-\theta_1)}{2\Gamma(\theta_1)\Gamma(\theta_2-a(\lambda))\Gamma(\theta_2-b(\lambda))} \frac{\Gamma(1-a(\lambda))\Gamma(1-b(\lambda))}{\Gamma(2-\theta_1)\Gamma(1-\theta_2)}.
    \end{equation*}
\end{proof}

From \cite[equation (26)]{PitmanYor2003}, we have that
\begin{equation*}
    n^{0\to 1}(\text{excursions with infinite lifetime})= \lim_{\lambda\to 0}\lambda n^{0\to 1}_\lambda 1,
\end{equation*}
which motivates the following corollary giving us the rate of infinite excursions.
\begin{corollary}\label{cor:infinite_lifetime}
     The following hold:
    \begin{align*}
        \lim_{\lambda\to 0}\lambda n^{0\to 1}_\lambda 1={}&\frac{\Gamma(|\theta|)\Gamma(2-|\theta|)}{2\Gamma(\theta_1)\Gamma(\theta_2)\Gamma(1-\theta_1)\Gamma(1-\theta_2)}=\frac{1}{2B(\theta_1,\theta_2)B(1-\theta_1,1-\theta_2)},  \\
        \lim_{\lambda\to 0}\lambda n^{1\to 0}_\lambda 1={}&\frac{\Gamma(|\theta|)\Gamma(2-|\theta|)}{2\Gamma(\theta_1)\Gamma(\theta_2)\Gamma(1-\theta_1)\Gamma(1-\theta_2)}=\frac{1}{2B(\theta_1,\theta_2)B(1-\theta_1,1-\theta_2)}.
    \end{align*}
\end{corollary}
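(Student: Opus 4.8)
The plan is to obtain both limits by passing $\lambda\to0$ directly inside the closed-form expressions for $\lambda n^{0\to1}_\lambda 1$ and $\lambda n^{1\to0}_\lambda 1$ furnished by Lemma~\ref{lemmatotalmass}; the entire argument reduces to tracking the two roots $a(\lambda)$, $b(\lambda)$ as $\lambda\to0$ and invoking continuity of the Gamma function.

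First I would record the behaviour of $a(\lambda)$ and $b(\lambda)$ from their definition in \eqref{a&b}. Their elementary symmetric functions are $a(\lambda)+b(\lambda)=|\theta|-1$ and $a(\lambda)b(\lambda)=2\lambda$, so as $\lambda\to0^+$ the unordered pair $\{a(\lambda),b(\lambda)\}$ converges to $\{0,|\theta|-1\}$. (For $0<\lambda<(|\theta|-1)^2/8$ both roots are real; in the borderline case $|\theta|=1$ they are conjugate purely imaginary but still both tend to $0=|\theta|-1$.) Which of $a(\lambda)$, $b(\lambda)$ tends to $0$ depends on the sign of $|\theta|-1$, but the crucial point is that the right-hand sides in Lemma~\ref{lemmatotalmass} are \emph{symmetric} in $a(\lambda)$ and $b(\lambda)$, so the limit is insensitive to this choice.

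Second I would substitute. Taking, say, $b(\lambda)\to0$ and $a(\lambda)\to|\theta|-1$, and using the identity $\theta_1-(|\theta|-1)=1-\theta_2$, the four Gamma factors in $\lambda n^{0\to1}_\lambda 1$ tend to $\Gamma(1-b(\lambda))\to\Gamma(1)=1$, $\Gamma(1-a(\lambda))\to\Gamma(2-|\theta|)$, $\Gamma(\theta_1-b(\lambda))\to\Gamma(\theta_1)$ and $\Gamma(\theta_1-a(\lambda))\to\Gamma(1-\theta_2)$. Substituting produces the asserted value; the computation for $\lambda n^{1\to0}_\lambda 1$ is word-for-word the same after interchanging $\theta_1$ and $\theta_2$ and using $\theta_2-(|\theta|-1)=1-\theta_1$, which is precisely why the two limits coincide. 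Finally, writing $B(\theta_1,\theta_2)=\Gamma(\theta_1)\Gamma(\theta_2)/\Gamma(|\theta|)$ and $B(1-\theta_1,1-\theta_2)=\Gamma(1-\theta_1)\Gamma(1-\theta_2)/\Gamma(2-|\theta|)$ rewrites the common value as $1/[2B(\theta_1,\theta_2)B(1-\theta_1,1-\theta_2)]$.

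There is no substantial obstacle here; the only point requiring care is the legitimacy of evaluating the Gamma functions at the limiting arguments. Under assumption~\eqref{assumption} we have $0<|\theta|<2$, so every argument that arises in the limit---namely $|\theta|$, $2-|\theta|$, $\theta_1,\theta_2\in(0,1)$ and $1-\theta_1,1-\theta_2\in(0,1)$---stays away from the nonpositive integers where $\Gamma$ has poles, whence all factors are finite and nonzero and the substitution is justified by continuity. In particular the factor tending to $\Gamma(1)=1$ rather than to a pole is exactly what keeps the limit both finite and strictly positive, as one expects for a rate of infinite excursions.
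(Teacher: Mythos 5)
Your proof is correct and follows essentially the same route as the paper: both pass $\lambda\to 0$ inside the closed-form expressions of Lemma~\ref{lemmatotalmass}, send $\{a(\lambda),b(\lambda)\}\to\{|\theta|-1,0\}$, and conclude by continuity of $\Gamma$ together with $\theta_1-(|\theta|-1)=1-\theta_2$. Your observation that which root tends to $0$ depends on the sign of $|\theta|-1$, and that only the symmetry of the expressions in $a(\lambda)$, $b(\lambda)$ makes this irrelevant, is a small but genuine refinement of the paper's terser proof, which simply asserts $a(\lambda)\to|\theta|-1$ and $b(\lambda)\to 0$.
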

\begin{proof}
    Since
    \begin{equation*}
        \lim_{\lambda\to 0} a(\lambda)=|\theta|-1 \text{ and } \lim_{\lambda\to 0} b(\lambda)=0
    \end{equation*}
    we have
    \begin{align*}
        \lim_{\lambda\to 0} \lambda n^{0\to 1} 1&=\frac{\Gamma(|\theta|)\Gamma(2-|\theta|)\Gamma(1)}{2\Gamma(1-\theta_1)\Gamma(\theta_2)\Gamma(1-\theta_2)\Gamma(\theta_1)},\\
        \lim_{\lambda\to 0} \lambda n^{1\to 0} 1&=\frac{\Gamma(|\theta|)\Gamma(2-|\theta|)\Gamma(1)}{2\Gamma(\theta_1)\Gamma(1-\theta_2)\Gamma(1-\theta_1)\Gamma(\theta_2)}.
    \end{align*}
\end{proof}
Since infinite excursions in our setting imply that killing has occurred, we have established the rate at which the excursions from $U_0$ switch to excursions from $U_1$ and vice versa.

In the following result we obtain quantities similar to those in  Lemma~\ref{lemmatotalmass} but singling  into four excursion paths types: those that start at $0$ and end at $0$, respectively, start at $0$ and end at $1$, start at $1$ and end at $0$, and that start at $1$ and end at $1$. 

\begin{lemma}\label{f00}
For any $\lambda\geq 0$,
\begin{align*}
\varphi_{0,0}(\lambda)&:=n^{0\to 1}\left(\left(1-e^{-\lambda H_0}\right)1_{\{H_0<H_1\}}\right)\\
&=\frac{1}{2}\left[\frac{\Gamma(|\theta|)\Gamma(1-a(\lambda))\Gamma(1-b(\lambda))}{\Gamma(1-\theta_1)\Gamma(\theta_2)\Gamma(\theta_1-a(\lambda))\Gamma(\theta_1-b(\lambda))}-\frac{\Gamma(|\theta|)\Gamma(2-|\theta|)}{\Gamma(\theta_1)\Gamma(\theta_2)\Gamma(1-\theta_1)\Gamma(1-\theta_2)}\right], \\
\varphi_{0,1}(\lambda)&:=n^{0\to 1}\left(\left(1-e^{-\lambda H_0}\right)1_{\{H_1<H_0\}}\right)\\
&=\frac{1}{2}\left[\frac{\Gamma(|\theta|)\Gamma(2-|\theta|)}{\Gamma(\theta_1)\Gamma(\theta_2)\Gamma(1-\theta_1)\Gamma(1-\theta_2)}-\frac{\Gamma(|\theta|)\Gamma(1-a(\lambda))\Gamma(1-b(\lambda))}{\Gamma(\theta_1)\Gamma(\theta_2)\Gamma(1-\theta_1)\Gamma(1-\theta_2)}\right]\\
&=n^{1\to 0}\left(\left(1-e^{-\lambda H_0}\right)1_{\{H_0<H_1\}}\right)=:\varphi_{1,0}(\lambda),
\end{align*}
and,
\begin{equation*}
\begin{split}
\varphi_{1,1}(\lambda)&:=n^{1\to 0}\left(\left(1-e^{-\lambda H_1}\right)1_{\{H_1<H_0\}}\right)\\
&=\frac{1}{2}\left[\frac{\Gamma(|\theta|)\Gamma(1-a(\lambda))\Gamma(1-b(\lambda))}{\Gamma(1-\theta_2)\Gamma(\theta_1)\Gamma(\theta_2-a(\lambda))\Gamma(\theta_2-b(\lambda))}-\frac{\Gamma(|\theta|)\Gamma(2-|\theta|)}{\Gamma(\theta_1)\Gamma(\theta_2)\Gamma(1-\theta_1)\Gamma(1-\theta_2)}\right].
\end{split}
\end{equation*}
\end{lemma}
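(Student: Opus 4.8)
The plan is to realise each of the four functionals as a boundary limit of the same quantity for the diffusion started from an interior point, and then to substitute the limits already established in Corollary~\ref{cor:pref00}. The bridge is the excursion-theoretic identity that, for a functional $F$ of the excursion path measurable with respect to its trace up to $\sigma:=H_0\wedge H_1$,
\begin{equation*}
n^{0\to 1}(F)=\kappa_0\lim_{x\to0+}\frac{\mathbb{E}_x[F]}{\mathbb{P}_x(H_1<H_0)},\qquad n^{1\to 0}(F)=\kappa_1\lim_{x\to1-}\frac{\mathbb{E}_x[F]}{\mathbb{P}_x(H_0<H_1)},
\end{equation*}
the normalisations being the rates of switching (infinite) excursions. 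Choosing $F=1_{\{H_1<H_0\}}$ and $F=1_{\{H_0<H_1\}}$ respectively forces $\kappa_0=n^{0\to1}(1_{\{H_1<H_0\}})$ and $\kappa_1=n^{1\to0}(1_{\{H_0<H_1\}})$, which by Corollary~\ref{cor:infinite_lifetime} share the common value $\Gamma(|\theta|)\Gamma(2-|\theta|)/[2\Gamma(\theta_1)\Gamma(\theta_2)\Gamma(1-\theta_1)\Gamma(1-\theta_2)]$.

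Granting this identity, each functional is a one-line substitution. For $\varphi_{0,0}$ I take $F=(1-e^{-\lambda H_0})1_{\{H_0<H_1\}}$ under $n^{0\to1}$ and insert the first limit of Corollary~\ref{cor:pref00}; for $\varphi_{1,1}$ I take $F=(1-e^{-\lambda H_1})1_{\{H_1<H_0\}}$ under $n^{1\to0}$ and insert the second limit. For the off-diagonal term I compute $\varphi_{1,0}$ directly: with $F=(1-e^{-\lambda H_0})1_{\{H_0<H_1\}}$ under $n^{1\to0}$ I split off $\mathbb{E}_x[e^{-\lambda H_0}1_{\{H_0<H_1\}}]$ and apply the third limit of Corollary~\ref{cor:pref00} to obtain $\varphi_{1,0}(\lambda)=\kappa_1\bigl(1-\Gamma(1-a(\lambda))\Gamma(1-b(\lambda))/\Gamma(2-|\theta|)\bigr)$. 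The asserted equality $\varphi_{0,1}=\varphi_{1,0}$ then follows at once, since the third limit of Corollary~\ref{cor:pref00} takes the same value at $0$ and at $1$ and $\kappa_0=\kappa_1$; this is the analytic shadow of the symmetry $1-\tilde X\stackrel{d}{=}X$ under $\theta_1\leftrightarrow\theta_2$ recorded in Remark~\ref{rem3.5}. In every case what remains is routine cancellation of Gamma factors to reach the displayed form.

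The two diagonal terms admit an independent derivation that bypasses the limiting identity and serves as a useful check. By Fubini and the definition of the entrance law, $\lambda n^{y\to b}_\lambda 1=n^{y\to b}(1-e^{-\lambda\eta}1_{\{\eta<\infty\}})$, where $\eta$ is the excursion lifetime; a finite lifetime occurs exactly on the non-switching event, on which $\eta=H_0$ under $n^{0\to1}$ and $\eta=H_1$ under $n^{1\to0}$, whereas switching excursions contribute their full mass $\kappa$. Hence $\varphi_{0,0}(\lambda)=\lambda n^{0\to1}_\lambda 1-\kappa_0$ and $\varphi_{1,1}(\lambda)=\lambda n^{1\to0}_\lambda 1-\kappa_1$, and substituting Lemma~\ref{lemmatotalmass} and Corollary~\ref{cor:infinite_lifetime} reproduces the diagonal expressions.

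I expect the main obstacle to be the rigorous justification of the limiting identity, namely that an excursion-measure integral of a functional resolved by $\sigma$ is recovered as the stated boundary limit with normalisation $\mathbb{P}_x(\text{switch})$, together with the interchange of this limit with the excursion-measure integration. Here the boundary regularity $0<\theta_1,\theta_2<1$ and the explicit estimates of Proposition~\ref{killed-est} are what make the argument go through: the potential form of the identity, obtained by taking $F=\int_0^{\sigma}e^{-\lambda t}f(X_t)\,dt$, is precisely the content of Proposition~\ref{killed-est} up to the factor $\kappa$. A secondary but genuine point of care is to keep track of which hitting time records the terminus of a switching excursion --- $H_1$ for excursions governed by $n^{0\to1}$ and $H_0$ for those governed by $n^{1\to0}$ --- so that each $\varphi_{i,j}$ is paired with the correct limit in Corollary~\ref{cor:pref00}.
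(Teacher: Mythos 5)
Your proposal is correct and follows essentially the same route as the paper: the paper's proof establishes exactly your bridge identity $n^{0\to 1}(F)=n^{0\to 1}(H_1<H_0)\lim_{x\to 0+}\mathbb{E}_x[F]/\mathbb{P}_x(H_1<H_0)$ via monotone convergence and the Markov property under the excursion measure (following Blumenthal), then substitutes the limits of Corollary~\ref{cor:pref00} together with the switching mass $n^{0\to 1}(H_1<H_0)$ from Corollary~\ref{cor:infinite_lifetime} as recorded in \eqref{pfff}, treating the remaining three cases identically. Your complementary check $\varphi_{0,0}(\lambda)=\lambda n^{0\to 1}_{\lambda}1-\kappa_0$ is precisely the complementarity the paper notes in the remark following the lemma, and the bookkeeping point you flag---that the switching excursion's terminus is recorded by $H_1$ under $n^{0\to 1}$ and by $H_0$ under $n^{1\to 0}$---is indeed the only delicate step in matching each $\varphi_{i,j}$ to the correct limit.
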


\begin{remark}
    We point out that the identities obtained in Lemmas~\ref{lemmatotalmass} and \ref{f00} are complementary. Indeed, under $n^{0\to1}$, $H_0$ is considered as infinite  on the event where the excursion hits one before returning to zero, thus we have 
    \begin{equation*}
        \begin{split}
    n^{0\to 1}(1-e^{-\lambda H_0})&=n^{0\to 1}(H_0=\infty)+n^{0\to 1}((1-e^{-\lambda H_0})1_{\{H_0<H_1\}})\\
    &=n^{0\to 1}(H_0=\infty)+\varphi_{0,0}(\lambda)=\lambda n^{0\to1}_{\lambda}1,\qquad\forall \lambda\geq 0.
        \end{split}
    \end{equation*}
\end{remark}

\begin{proof}[Proof of Lemma~\ref{f00}]
We just sketch the proof of the first case. The proof for the three other cases follow exactly the same line. First of all, we have that
\begin{equation*}
    \begin{split}
        n^{0\to 1}\left(\left(1-e^{-\lambda H_0}\right)1_{\{H_0<H_1\}}\right)=\lim_{x\to0+}\frac{\mathbb{E}_x\left[\left(1-e^{-\lambda H_0}\right)1_{\{H_0<H_1\}}\right]}{\mathbb{P}_x(H_1<H_0)}n^{0\to 1}(H_1<H_0)
    \end{split}
\end{equation*}
This limit is a consequence of the Markov property under the excursion measure $n^{0\to 1}$, just as in the proof of Theorem 6.1 in~\cite{Blumenthal} page 176. Indeed, by the monotone convergence theorem and the Markov property, we have
\begin{equation*}
    \begin{split}
       &n^{0\to 1}\left(\left(1-e^{-\lambda H_0}\right)1_{\{H_0<H_1\}}\right)=\lim_{\epsilon \to 0+} n^{0\to 1}\left(\left(e^{-\lambda \epsilon}-e^{-\lambda H_0}\right)1_{\{\epsilon<H_0<H_1\}}\right)\\&=\lim_{\epsilon \to 0+} e^{-\lambda\epsilon}n^{0\to 1}\left(\mathbb{E}_{X_{\epsilon}}\left[\left(1-e^{-\lambda H_0}\right)1_{\{H_0<H_1\}}\right]1_{\{\epsilon<H_0\wedge H_1\}}\right).
    \end{split}
\end{equation*}
Because $n^{0\to 1}$ is carried by the paths that start at zero continuously, under this measure $X_\epsilon\to 0+$ as $\epsilon\to 0+$. Call $$r_\lambda=\lim_{x\to 0+}\frac{\mathbb{E}_{x}\left[\left(1-e^{-\lambda H_0}\right)1_{\{H_0<H_1\}}\right]}{\mathbb{P}_x(H_1<H_0)}.$$ We know from Corollary~\ref{cor:pref00} that this limit is well defined and finite. A further application of the Markov property guarantees that  
\begin{equation*}
    \begin{split}
       &n^{0\to 1}\left(\left(1-e^{-\lambda H_0}\right)1_{\{H_0<H_1\}}\right)=r_{\lambda}\lim_{\epsilon \to 0+} n^{0\to 1}\left(\mathbb{P}_{X_{\epsilon}}(H_1<H_0)1_{\{\epsilon<H_0\wedge H_1\}}\right)\\
       &=r_{\lambda} \lim_{\epsilon \to 0+}n^{0\to 1}\left(1_{\{\epsilon<H_1<H_0\}}\right)\\
       &=r_{\lambda}n^{0\to 1}(H_1<H_0).
    \end{split}
\end{equation*}
We derive the claimed expression by using the identities from Corollary~\ref{cor:pref00} and that 
\begin{equation}\label{pfff}
  \frac{\Gamma(|\theta|)\Gamma(2-|\theta|)}{2\Gamma(\theta_1)\Gamma(\theta_2)\Gamma(1-\theta_1)\Gamma(1-\theta_2)}=\lim_{\lambda\to0}\lambda n^{0\to1}_{\lambda}1=n^{0\to 1}(H_1<H_0),  
\end{equation} as proved in Corollary~\ref{cor:infinite_lifetime}. 
\end{proof}

Now we are ready to derive $n^{0\to 1}_{\lambda}$, which together with Theorem \ref{resolventtwicekilled}, fully characterises the Wright--Fisher diffusion via its excursions.

\begin{theorem}\label{excursion-resolvent}
    For all $\lambda \in [0,\infty)\setminus \{(|\theta|-1)^2 / 8\}$, the following hold:
    \begin{align*}
        &n^{0\to 1}_{\lambda}(\dif x)\\
        &= \frac{\Gamma(a(\lambda))\Gamma(b(\lambda))\Gamma(1-a(\lambda))\Gamma(1-b(\lambda))}{2\Gamma(1-\theta_1)\Gamma(\theta_2)\Gamma(\theta_1-a(\lambda))\Gamma(\theta_1-b(\lambda))}\frac{\Gamma(|\theta|)}{\Gamma(\theta_1)\Gamma(\theta_2)}x^{\theta_1 -1}(1-x)^{\theta_2 -1}\\
        &\phantom{=} \times\left[\HG{a(\lambda)}{b(\lambda)}{\theta_2}{1-x}  -\frac{\Gamma(\theta_1-a(\lambda))\Gamma(\theta_1-b(\lambda))}{\Gamma(\theta_1)\Gamma(1-\theta_2)}  \HG{a(\lambda)}{b(\lambda)}{\theta_1}{x} \right]\dif x,\\
        &n^{1\to 0}_{\lambda}(\dif x) \\
        &=\frac{\Gamma(a(\lambda))\Gamma(b(\lambda))\Gamma(1-a(\lambda))\Gamma(1-b(\lambda))}{2\Gamma(1-\theta_2)\Gamma(\theta_1)\Gamma(\theta_2-a(\lambda))\Gamma(\theta_2-b(\lambda))}\frac{\Gamma(|\theta|)}{\Gamma(\theta_1)\Gamma(\theta_2)}x^{\theta_1 -1}(1-x)^{\theta_2 -1}\\
        &\phantom{=}\times\left[\HG{a(\lambda)}{b(\lambda)}{\theta_1}{x}  -\frac{\Gamma(\theta_2-a(\lambda))\Gamma(\theta_2-b(\lambda))}{\Gamma(\theta_2)\Gamma(1-\theta_1)}  \HG{a(\lambda)}{b(\lambda)}{\theta_2}{1-x} \right]\dif x.
    \end{align*}
\end{theorem}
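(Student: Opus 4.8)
The plan is to compute $n^{0\to1}_\lambda(f) = \int_0^1 f(x)\, n^{0\to1}_\lambda(\dif x)$ for an arbitrary bounded continuous $f$ and then read off the density, mirroring the strategy in the proof of Lemma~\ref{f00}. By Fubini the Laplace-transformed entrance law is the excursion-measure average of the discounted occupation functional,
\[
n^{0\to1}_\lambda(f) = n^{0\to1}\left(\int_0^\eta e^{-\lambda t} f(\xi_t)\,\dif t\right),
\]
so the task reduces to evaluating the right-hand side under the $\sigma$-finite measure $n^{0\to1}$.

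First I would invoke the Markov property under $n^{0\to1}$, exactly as in Theorem~6.1 of \cite{Blumenthal} (p.~176) already used for Lemma~\ref{f00}. Shifting the excursion by a small $\epsilon>0$ and using that $n^{0\to1}$ is carried by paths leaving $0$ continuously, the segment after time $\epsilon$ evolves as the diffusion killed at $\{0,1\}$, so the tail of the occupation integral contributes $R^{0,1}_\lambda f(\xi_\epsilon)$. Writing $R^{0,1}_\lambda f(\xi_\epsilon) = \bigl(R^{0,1}_\lambda f(\xi_\epsilon)/\mathbb{P}_{\xi_\epsilon}(H_1<H_0)\bigr)\,\mathbb{P}_{\xi_\epsilon}(H_1<H_0)$, letting $\epsilon\to0$ so that $\xi_\epsilon\to0+$, and using $n^{0\to1}\bigl(\mathbb{P}_{\xi_\epsilon}(H_1<H_0)1_{\{\epsilon<\eta\}}\bigr)\to n^{0\to1}(H_1<H_0)$ as in Lemma~\ref{f00}, this expresses $n^{0\to1}_\lambda(f)$ through the boundary limit of Proposition~\ref{killed-est} (equation~\eqref{RK1lim}) together with the switching mass $n^{0\to1}(H_1<H_0)$ from Corollary~\ref{cor:infinite_lifetime} (equivalently~\eqref{pfff}).

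Next I would make the $x$-dependence explicit. Substituting the Wronskian form~\eqref{eq:greenwronskian} into $R_\lambda f(0)$ and $R_\lambda f(1)$ and using $\HG{a(\lambda)}{b(\lambda)}{\theta_1}{0}=1=\HG{a(\lambda)}{b(\lambda)}{\theta_2}{0}$ from~\eqref{Fin2} gives
\[
R_\lambda f(0)=\frac{2\Gamma(a(\lambda))\Gamma(b(\lambda))}{\Gamma(|\theta|)}\,\mathbb{E}\!\left[\HG{a(\lambda)}{b(\lambda)}{\theta_2}{1-Y}f(Y)\right],
\]
with the analogous expression for $R_\lambda f(1)$ carrying $\HG{a(\lambda)}{b(\lambda)}{\theta_1}{Y}$, where $Y\sim\textnormal{Beta}(\theta_1,\theta_2)$. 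Inserting these into~\eqref{RK1lim} collapses everything into a single average $\mathbb{E}[(\,\cdots\,)f(Y)]$; since $f$ is arbitrary, the integrand against the law of $Y$, i.e.\ against $m(\dif x)$, is exactly the claimed density. The coefficient $\Gamma(\theta_1-a(\lambda))\Gamma(\theta_1-b(\lambda))/[\Gamma(\theta_1)\Gamma(1-\theta_2)]$ multiplying $\HG{a(\lambda)}{b(\lambda)}{\theta_1}{x}$ then appears as the quotient of the two constants in~\eqref{RK1lim} once one uses $a(\lambda)+b(\lambda)=|\theta|-1$, which yields $\theta_1-a(\lambda)=1-\theta_2+b(\lambda)$ and makes the Beta-type constants cancel.

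The step I expect to be most delicate is the bookkeeping of the Gamma-function constants and the overall normalization: keeping track of the factors of $2$ coming from the scale derivative~\eqref{s'} and from the It\^o--McKean local-time normalisation, and combining the switching mass of Corollary~\ref{cor:infinite_lifetime} with the~\eqref{RK1lim} prefactor so that, when $f\equiv1$, the outcome is consistent with the total mass $\lambda n^{0\to1}_\lambda 1$ computed in Lemma~\ref{lemmatotalmass}. A secondary point requiring care is the rigorous justification of the Markov-property and limit interchange under the $\sigma$-finite measure $n^{0\to1}$, together with continuity of $x\mapsto R^{0,1}_\lambda f(x)/\mathbb{P}_x(H_1<H_0)$ up to the boundary. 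Finally, the expression for $n^{1\to0}_\lambda$ needs no separate computation: it follows by applying the reflection symmetry $0\leftrightarrow1$, $\theta_1\leftrightarrow\theta_2$, $Y\leftrightarrow1-Y$ of Remark~\ref{rem3.5} to the companion limit~\eqref{RK2lim}.
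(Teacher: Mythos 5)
Your proposal is correct, but it takes a genuinely different route from the paper's actual proof --- indeed it is precisely the alternative that the authors themselves sketch in the remark immediately following Theorem~\ref{excursion-resolvent}. The paper's proof stays in the interior: it invokes the identity $n^{y\to b}_{\lambda}(A) = \lambda\bigl(n^{y\to b}_{\lambda}1\bigr)\,R^{b}_{\lambda}1_A(y)$ for $y\in(0,1)$ (Rogers, p.~321, or Rogers--Williams (50.16)), substitutes the \emph{once}-killed resolvent of Lemma~\ref{lemmaresolventkilled} and the total mass of Lemma~\ref{lemmatotalmass}, and then lets $y\to 0$ using only boundedness and continuity of the integrands on $[0,1]$; neither Proposition~\ref{killed-est} nor any Markov-property argument under the $\sigma$-finite measure is needed, since the boundary analysis is already packaged into Lemma~\ref{lemmatotalmass}. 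Your route instead works directly at the boundary: you normalise the \emph{doubly}-killed resolvent $R^{0,1}_{\lambda}f(x)$ by $\mathbb{P}_x(H_1<H_0)$, pass to the limit via Proposition~\ref{killed-est}, and multiply by the switching mass $n^{0\to1}(H_1<H_0)$ from \eqref{pfff}, with the identification $n^{0\to1}_{\lambda}(f)=\lim_{x\to0+}\bigl(R^{0,1}_{\lambda}f(x)/\mathbb{P}_x(H_1<H_0)\bigr)\, n^{0\to1}(H_1<H_0)$ justified by the Markov property under $n^{0\to1}$ exactly as in Lemma~\ref{f00}; this is heavier analytically but more self-contained probabilistically, deriving the entrance law from excursion-theoretic first principles rather than quoting the interior identity from the literature. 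Your algebra is also sound: the cancellation $1-\theta_2+b(\lambda)=\theta_1-a(\lambda)$, a consequence of $a(\lambda)+b(\lambda)=|\theta|-1$, is exactly what converts the constants of \eqref{RK1lim} into the bracket appearing in the theorem, and when all prefactors are combined the factor $2\Gamma(a(\lambda))\Gamma(b(\lambda))/\Gamma(|\theta|)$ coming from \eqref{eq:greenwronskian} cancels against the $1/2$ in \eqref{pfff}, so your route lands on \emph{exactly} the same overall constant as a literal execution of the paper's own route (the normalisation concern you flag is therefore common to both derivations and resolves identically, since both rest on the same It\^o--McKean convention via Lemma~\ref{lemmatotalmass}). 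Finally, obtaining $n^{1\to0}_{\lambda}$ from \eqref{RK2lim} by the reflection symmetry of Remark~\ref{rem3.5} is the right move and mirrors what the paper does implicitly.
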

\begin{proof}
As in \cite[p.\ 321]{RogersExcursions} (or equation $(50.16)$ in \cite{RogersWilliams}), for $y\in\left(0,1\right)$ and $b\in \{0,1\}$, we have
\begin{equation*}
    n^{y\to b}_{\lambda}(A) = \lambda\left(n^{y\to b}_{\lambda}1\right)  R^b_{\lambda}1_{A}(y),
\end{equation*}
where for $b\in \{0,1\}$, we denote by $R_\lambda^{b}$ the resolvent of $X^{y\to b}$, that is, the process killed at $b$. From Lemma \ref{lemmaresolventkilled}, we know the formula for $R^b_{\lambda}1_{A}$. Since all the functions under the integral are bounded on the compact space $[0,1]$, the result will follow from
\begin{equation*}
    n^{0\to b}_{\lambda}(\dif x) =\lim_{y\to 0} n^{y\to b}_{\lambda}(\dif x),
\end{equation*}
using the result from Lemma \ref{lemmatotalmass}.
\end{proof}
    
    \begin{figure}[ht]
        \centering
        \includegraphics[scale=0.6]{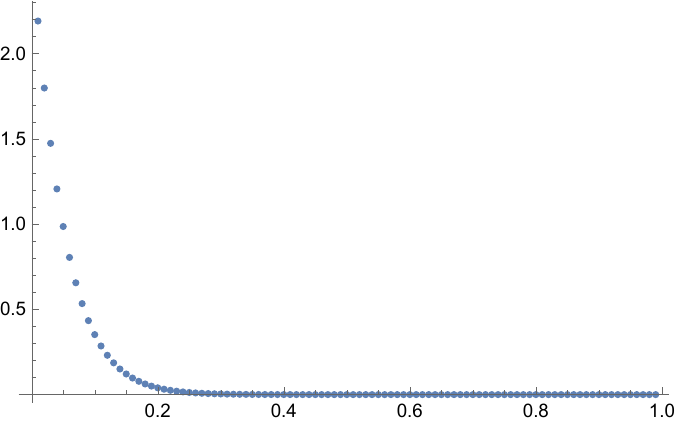}
        \includegraphics[scale=0.6]{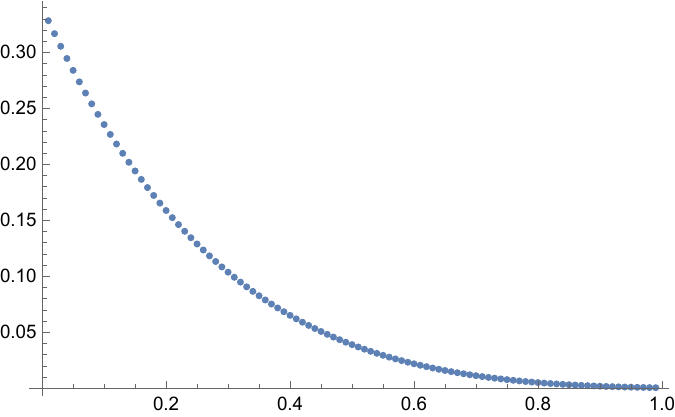}
        \includegraphics[scale=0.6]{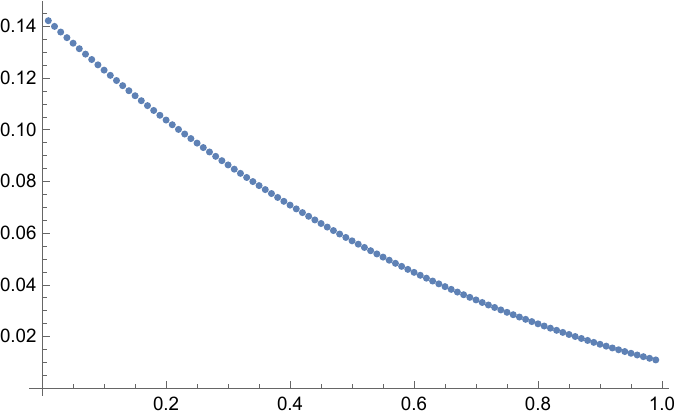}
        \includegraphics[scale=0.6]{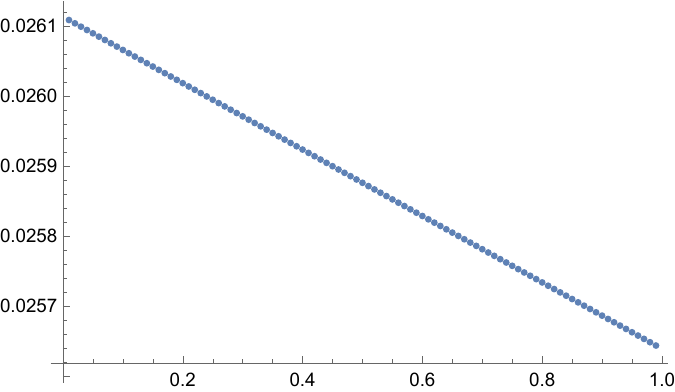}
        \caption{Numerically determined density $n^{0\to 1}_t(\dif x)/\dif x$ for $t=0.1,0.5,1,5$, generated using the \texttt{NInverseLaplaceTransform} function in Mathematica 13.3.
        The notebook used to generate the figures is available as an online supplement.
        }
        \label{fig:numerics}
    \end{figure}

\begin{remark}
    Alternatively, we could have proved Theorem~\ref{excursion-resolvent} by noticing that for any $f$ continuous and bounded function, we have the identity 
    $$\int^{1}_{0}n^{0\to 1}_{\lambda}(\dif y)f(y)=\lim_{x\to 0+}\frac{R^{0,1}_\lambda f(x)}{\mathbb{P}_{x}(H_1<H_0)}\times n^{0\to 1}(H_1<H_0),$$ which is a consequence of the Markov property under the excursion measure $n^{0\to 1}$, as in~Lemma~\ref{f00}. With this at hand, one can then use the identity~\eqref{RK1lim} together with the identities from Corollary~\ref{cor:infinite_lifetime} and \eqref{eq:resolventwronskian}.
\end{remark}
\begin{remark}
    From a theoretical point of view the Laplace transform $n^{0\to 1}_{\lambda}$ characterises the entrance law $n_t$ uniquely. In practice however, inverting the Laplace transform explicitly is only possible in a few select cases and thus is typically done numerically. We illustrate what the resulting entrance law looks like via numerical inversion in Figure \ref{fig:numerics}.
\end{remark}

\subsection{The Hausdorff dimension of the zero set and mutual singularity}
 In this section we prove an interesting consequence of our results. We will show that knowledge of (the leading terms of) the Laplace transform of the excursion measure is sufficient to study absolute continuity (or lack thereof) between any two Wright--Fisher diffusions satisfying \eqref{assumption}, as a function of their respective mutation parameters. We recover some results very recently obtained, by completely different techniques, by \cite{Jenkins24}. Namely, we will show that any two Wright--Fisher diffusions with accessible boundaries differing in their mutation parameters have mutually singular path measures. The proof is here obtained via an analysis of the Hausdorff dimension of the diffusion's zero-  and one- set. Recall that the Hausdorff dimension is an index for the size of a set, in this case the set of times where the allele of type $1$, respectively of type $2$, is not present. We refer to Chapter 5 in~\cite{bertoinsub} where various different notions of fractal dimensions are studied for random regenerative sets, such as that of the times where $X$ is at zero.
\begin{theorem}\label{HD}
    Let $\mathcal{R}_0$ and $\mathcal{R}_1$ be the closure of the set of times where the Wright--Fisher diffusion is at $0$, respectively at $1$, viz. 
    \begin{align*}
        \mathcal{R}_0=\overline{\{t\geq 0: X_t=0\}},\qquad \mathcal{R}_1=\overline{\{t\geq 0: X_t=1\}}.    
    \end{align*}
    We have that for any fixed, arbitrary time $T>0$,  
    \begin{align*}
        \dim_{\text{H}}(\mathcal{R}_0\cap[0,T])=1-\theta_1,\qquad \mathbb{P}_0\text{--a.s.}\qquad\text{and}\qquad  \dim_{\text{H}}(\mathcal{R}_1\cap[0,T])=1-\theta_2,\qquad \mathbb{P}_1\text{--a.s.}
    \end{align*} 
    where $\dim_{\text{H}}$ denotes the Hausdorff dimension. 
\end{theorem}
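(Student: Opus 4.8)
The plan is to realise $\mathcal{R}_0$ as the closed range of a subordinator and to read off its Hausdorff dimension from the large-$\lambda$ behaviour of the associated Laplace exponent, which Lemma~\ref{lemmatotalmass} essentially already supplies. Since $0$ is a regular recurrent point under $\mathbb{P}_0$ (guaranteed by \eqref{assumption}), the inverse local time $(\tau^0_\ell)_{\ell\ge0}$ is a subordinator whose closed range coincides with $\mathcal{R}_0$. Because $m(\{0\})=0$ (established via \eqref{m0}), this subordinator has zero drift, hence is pure-jump and its range is Lebesgue-null. By the theory of random regenerative sets, see Chapter~5 of \cite{bertoinsub}, the Hausdorff dimension of the closed range of a driftless subordinator with Laplace exponent $\Phi$ equals the lower index
\begin{equation*}
\dim_{\mathrm{H}}\bigl(\overline{\{\tau^0_\ell:\ell\ge0\}}\bigr)=\liminf_{\lambda\to\infty}\frac{\log\Phi(\lambda)}{\log\lambda}.
\end{equation*}
Since Hausdorff dimension is a local property and $\mathcal{R}_0$ accumulates at $0$ under $\mathbb{P}_0$, restricting to $[0,T]$ leaves this value unchanged.

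Next I would identify $\Phi$ with a total mass of the excursion measure. It\^o's description gives $\Phi(\lambda)=\int_{U_0}(1-e^{-\lambda\eta(f)})\,n^{0}(\mathrm df)=\lambda\,n^{0}_\lambda 1$, where $n^0$ is the excursion measure of the \emph{unkilled} diffusion away from $0$. The quantity computed in Lemma~\ref{lemmatotalmass} is instead $\lambda\,n^{0\to1}_\lambda 1$, associated with killing excursions upon reaching $1$. The two measures agree on excursions that never reach $1$ and differ only on $\{H_1<H_0\}$, whose mass equals $n^{0\to1}(H_1<H_0)<\infty$ by Corollary~\ref{cor:infinite_lifetime}. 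A short computation then shows $|\Phi(\lambda)-\lambda\,n^{0\to1}_\lambda 1|\le n^{0\to1}(H_1<H_0)$ uniformly in $\lambda$, so this bounded discrepancy cannot affect the lower index, and I may compute the index directly from $\lambda\,n^{0\to1}_\lambda 1$.

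I would then extract the asymptotics. From \eqref{a&b} one reads off $a(\lambda)+b(\lambda)=|\theta|-1$ and $a(\lambda)b(\lambda)=2\lambda$, and as $\lambda\to\infty$ both $a(\lambda),b(\lambda)$ leave the real axis with modulus tending to infinity. Applying the standard ratio asymptotic $\Gamma(z+\alpha)/\Gamma(z+\beta)\sim z^{\alpha-\beta}$ with $z=-a(\lambda)$ and $z=-b(\lambda)$ to the expression in Lemma~\ref{lemmatotalmass} yields
\begin{equation*}
\frac{\Gamma(1-a(\lambda))\Gamma(1-b(\lambda))}{\Gamma(\theta_1-a(\lambda))\Gamma(\theta_1-b(\lambda))}\sim\bigl(a(\lambda)b(\lambda)\bigr)^{1-\theta_1}=(2\lambda)^{1-\theta_1},
\end{equation*}
so that $\lambda\,n^{0\to1}_\lambda 1\sim \tfrac{\Gamma(|\theta|)}{2\Gamma(1-\theta_1)\Gamma(\theta_2)}(2\lambda)^{1-\theta_1}$. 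Hence $\Phi(\lambda)\sim C\lambda^{1-\theta_1}$, the limit of $\log\Phi(\lambda)/\log\lambda$ exists and equals $1-\theta_1$, giving $\dim_{\mathrm{H}}(\mathcal{R}_0\cap[0,T])=1-\theta_1$, $\mathbb{P}_0$-a.s. The statement for $\mathcal{R}_1$ follows verbatim after the substitution $\theta_1\leftrightarrow\theta_2$, $x\leftrightarrow 1-x$ applied to the second identity in Lemma~\ref{lemmatotalmass}, which is exactly the symmetry exploited in Remark~\ref{rem3.5}.

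The main obstacle I anticipate is the rigorous justification of the Gamma-ratio asymptotic along the complex rays traced by $a(\lambda)$ and $b(\lambda)$: one must verify that the error terms in $\Gamma(z+\alpha)/\Gamma(z+\beta)\sim z^{\alpha-\beta}$ stay uniformly controlled as the arguments tend to infinity with argument approaching $\pm\pi/2$, and that the product of the two complex powers recombines to the real positive quantity $(2\lambda)^{1-\theta_1}$, which it does because $\arg(-a(\lambda))+\arg(-b(\lambda))\to0$. A secondary point needing care is confirming driftlessness and the absence of killing in $(\tau^0_\ell)_{\ell\ge0}$, so that the lower-index formula of \cite{bertoinsub} applies as stated; both are consistent with the derived growth $\Phi(\lambda)\sim C\lambda^{1-\theta_1}$ with exponent strictly below $1$.
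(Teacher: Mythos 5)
Your proof is correct, and its skeleton coincides with the paper's: realise $\mathcal{R}_0$ as the closed range of the driftless inverse local time at $0$ (drift zero because $m(\{0\})=0$), invoke the lower-index formula for regenerative sets from Chapter 5 of \cite{bertoinsub}, and extract the index $1-\theta_1$ from Gamma-ratio asymptotics at the conjugate points $a(\lambda),b(\lambda)$ with $a(\lambda)b(\lambda)=2\lambda$ --- your $\Gamma(z+\alpha)/\Gamma(z+\beta)\sim z^{\alpha-\beta}$ with the argument cancellation $\arg(-a(\lambda))+\arg(-b(\lambda))\to 0$ is exactly the paper's application of (5.11.12) in \cite{NIST:DLMF} to $\Gamma(\delta_i\pm iz_\lambda)$, and that estimate holds uniformly in sectors $|\arg z|\le\pi-\delta$, so the uniformity issue you flag is harmless. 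The one genuine difference is how the excursions reaching the opposite endpoint are neutralised. The paper isolates $\varphi_{0,0}(\lambda)=n^{0\to1}\bigl(\bigl(1-e^{-\lambda H_0}\bigr)1_{\{H_0<H_1\}}\bigr)$, computed in Lemma \ref{f00} (which in turn rests on the resolvent limits of Corollary \ref{cor:pref00}), and argues that the `switching' excursions arrive at an exponential local time with finite rate \eqref{pfff}, so the corresponding times are isolated points of $\mathcal{R}_0$ and cannot affect the dimension. You instead keep the full Laplace exponent $\Phi(\lambda)=\lambda n^{0}_\lambda 1$ of the unkilled inverse local time and note that it differs from the quantity $\lambda n^{0\to1}_\lambda 1$ of Lemma \ref{lemmatotalmass} by at most the constant $n^{0\to1}(H_1<H_0)<\infty$ of Corollary \ref{cor:infinite_lifetime}; since both quantities grow like $\lambda^{1-\theta_1}$ with $1-\theta_1>0$, a bounded additive discrepancy is invisible to the lower index. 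Your variant is slightly more economical --- it bypasses Lemma \ref{f00} and Corollary \ref{cor:pref00} entirely and avoids the isolated-points argument, since for the unkilled recurrent diffusion $\mathcal{R}_0$ is the closed range of a single proper subordinator --- at the modest cost of verifying regularity and recurrence of $0$ under \eqref{assumption} and the absence of drift and killing, all of which you address; the paper's route, by contrast, produces the exponents $\varphi_{0,0},\varphi_{1,1}$ as quantities of independent interest within its two-endpoint excursion framework.
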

\begin{corollary}\label{cor:singular}
Denote by $\mathbb{P}_x^{\theta_1,\theta_2,T}$ the path measure up to a fixed, arbitrary time $T > 0$ associated with $X\mid X_0 = x$, $x \in [0,1]$ with the generator $\mathscr{G}$ as in \eqref{WFGenerator}, and suppose $(\theta_1,\theta_2), (\theta_1',\theta_2') \in (0,1)^2$ with $(\theta_1',\theta_2') \neq (\theta_1,\theta_2)$. Then $\mathbb{P}_x^{\theta_1,\theta_2,T}$ and $\mathbb{P}_x^{\theta_1',\theta_2',T}$ are mutually singular.
\end{corollary}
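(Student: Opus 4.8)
The plan is to exhibit, for each parameter pair, a single functional of the path on $[0,T]$ that is almost surely constant under the corresponding law, with the constant depending on the parameters through a boundary exponent. Mutual singularity follows immediately from such a functional: if $F$ is $\sigma(X_s:s\le T)$-measurable and $F=c(\theta)$ holds $\mathbb{P}_x^{\theta_1,\theta_2,T}$-a.s.\ while $F=c(\theta')\neq c(\theta)$ holds $\mathbb{P}_x^{\theta_1',\theta_2',T}$-a.s., then the set $\{F=c(\theta)\}$ has full mass under the first law and null mass under the second. Theorem \ref{HD} supplies exactly such a functional: the Hausdorff dimension $\dim_{\text{H}}(\mathcal{R}_0\cap[0,T])$ of the zero-set, or $\dim_{\text{H}}(\mathcal{R}_1\cap[0,T])$ of the one-set, read off over the observation window. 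Both are $\sigma(X_s:s\le T)$-measurable, since the traces $\mathcal{R}_b\cap[0,T]$ are determined by the restriction of the path to $[0,T]$.

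Since $(\theta_1',\theta_2')\neq(\theta_1,\theta_2)$, at least one coordinate differs, and by the symmetry $x\mapsto 1-x$, $(\theta_1,\theta_2)\mapsto(\theta_2,\theta_1)$ recorded in Remark \ref{rem3.5} I may assume without loss of generality that $\theta_1\neq\theta_1'$ and work with $\mathcal{R}_0$. For the boundary starting point $x=0$ the argument is then direct: Theorem \ref{HD} gives $\dim_{\text{H}}(\mathcal{R}_0\cap[0,T])=1-\theta_1$ almost surely under $\mathbb{P}_0^{\theta_1,\theta_2,T}$ and $=1-\theta_1'$ almost surely under $\mathbb{P}_0^{\theta_1',\theta_2',T}$, so the event $\{\dim_{\text{H}}(\mathcal{R}_0\cap[0,T])=1-\theta_1\}$ separates the two laws; the case $x=1$ with $\theta_2\neq\theta_2'$ is handled symmetrically.

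For a general interior start $x\in(0,1)$ I would transport Theorem \ref{HD} to the first hitting time $H_0$ of $0$ via the strong Markov property, keeping everything inside the fixed window. On $\{H_0<T\}$, conditionally on $\mathcal{F}_{H_0}$ the shifted path $(X_{H_0+t})_{t\ge0}$ is a Wright--Fisher diffusion started at $0$, and applying Theorem \ref{HD} with the almost surely strictly positive remaining horizon $T-H_0$ yields $\dim_{\text{H}}(\mathcal{R}_0\cap[H_0,T])=1-\theta_1$ almost surely on $\{H_0<T\}$. As $\mathcal{R}_0\cap[0,H_0)=\varnothing$ and the singleton $\{H_0\}$ is dimensionally negligible, this gives $\dim_{\text{H}}(\mathcal{R}_0\cap[0,T])=1-\theta_1$ on $\{H_0<T\}$, with the primed law returning $1-\theta_1'$. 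Thus the two laws, \emph{restricted to} $\{H_0<T\}$, are mutually singular.

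The step I expect to be the main obstacle is the complementary event $\{H_0\ge T\}$, or more symmetrically the confinement event $E:=\{H_0\wedge H_1>T\}$ on which the path never reaches $\{0,1\}$ within $[0,T]$. For interior $x$ this event has strictly positive probability, and there the dimension functional sees only the empty set and carries no information; indeed, on $E$ the path is confined to a compact subinterval of $(0,1)$, where the difference of the first-order coefficients in \eqref{WFGenerator} divided by $\sqrt{x(1-x)}$ is bounded, so a Girsanov comparison makes the two laws look locally equivalent and no path functional can separate them on $E$. Eliminating $E$ is therefore the crux of the fixed-window statement, and it is precisely here that the finite horizon $T$ is binding: for the boundary starts $x\in\{0,1\}$ the relevant boundary lies in $\mathcal{R}_b$ from time $0$, so $E$ is null and the separation from Theorem \ref{HD} is clean, whereas for interior $x$ one must supply a further argument guaranteeing that the relevant boundary is reached before time $T$ (equivalently controlling $\mathbb{P}_x(H_0\wedge H_1>T)$). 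My plan is thus to establish the separating statement unconditionally for $x\in\{0,1\}$ from Theorem \ref{HD}, to reduce the interior case to this one on $\{H_0<T\}$ by the strong Markov step above, and to isolate the confinement event $E$ as the single quantity whose control completes the proof.
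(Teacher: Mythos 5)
Your separating functional is exactly the paper's --- the Hausdorff-dimension statistic of Theorem \ref{HD} --- and your boundary-start case and the strong Markov transport of Theorem \ref{HD} onto $\{H_0 < T\}$ are correct; indeed the paper compresses that transport into the single phrase ``$\mathcal{R}_0$ is a measurable function of $(X_t)_{t\in[0,T]}$ on $\{H_0<T\}$'', so you have made explicit a step it leaves implicit. The divergence is in the target. The paper's proof never attempts to produce an event of \emph{full} mass under one law and null mass under the other: it declares it sufficient to exhibit $A$ with $\mathbb{P}_x^{\theta_1,\theta_2,T}(A) > 0 = \mathbb{P}_x^{\theta_1',\theta_2',T}(A)$ (its display \eqref{singular}), takes $A = \{\dim_{\text{H}}(\mathcal{R}_0\cap[0,T]) = 1-\theta_1\}$ with $\theta_1 \neq \theta_1'$ assumed WLOG, and uses regularity only to get $\mathbb{P}_x(H_0 < T) > 0$; this yields $\mathbb{P}_x^{\theta_1,\theta_2,T} \centernot\ll \mathbb{P}_x^{\theta_1',\theta_2',T}$ and, by symmetry, the reverse non-domination. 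The confinement event $E = \{H_0 \wedge H_1 > T\}$ is never addressed, because positive (rather than full) mass of $A$ is all that criterion requires --- note, though, that \eqref{singular} is in standard terminology the criterion for two-sided failure of absolute continuity, which is weaker than mutual singularity in the strict sense.

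This matters because the step you flag as the remaining obstacle is not merely hard: your own Girsanov observation shows it is impossible for interior $x$. On paths confined to $[\epsilon,1-\epsilon]$ the drift difference divided by $\sqrt{x(1-x)}$ is bounded, so by localisation and Novikov the two laws are equivalent on the confinement event; since both assign it strictly positive probability from $x \in (0,1)$ on a fixed window, any event of full mass under one law must retain positive mass under the other. Hence ``eliminating $E$'' cannot succeed, and your plan, read as a proof of strict mutual singularity for interior starting points, cannot be completed as written. The resolution is to weaken the target to \eqref{singular}, exactly as the paper does (your $\{H_0<T\}$ analysis then finishes the argument immediately), while your clean boundary-start case shows the strict statement does hold when $x \in \{0,1\}$ and the mutation parameter at the starting boundary differs. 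In short: you correctly reconstructed the paper's mechanism, proved more than it does where more is true, and correctly diagnosed --- without quite naming it --- that the fixed-window corollary for interior $x$ is only available in the non-absolute-continuity sense that the paper's own sufficient condition encodes.
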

\begin{proof}[Proof of Corollary~\ref{cor:singular}]
It suffices to find an event $A$ such that 
\begin{equation}\label{singular}
\mathbb{P}_x^{\theta_1,\theta_2,T}(A) > 0 = \mathbb{P}_x^{\theta_1',\theta_2',T}(A).
\end{equation}
Without loss of generality suppose $\theta_1 \neq \theta_1'$. Since $X$ is regular, we know $\mathbb{P}_x^{\theta_1,\theta_2,T}(H_0 < T) > 0$ and so, since $\mathcal R_{0}$ is a measurable function of $(X_t)_{t \in [0,T]}$ on $\{H_0 < T\}$, taking $A := \{ \omega \in \Omega:\: \dim_{\text{H}}(\mathcal{R}_0(\omega)\cap[0,T])=1-\theta_1\}$ satisfies \eqref{singular}. Hence $\mathbb{P}_x^{\theta_1,\theta_2,T} \centernot\ll \mathbb{P}_x^{\theta_1',\theta_2',T}$. By a symmetric argument, 
$\mathbb{P}_x^{\theta_1,\theta_2,T} \centernot\ll \mathbb{P}_x^{\theta_1',\theta_2',T}$ also. 
\end{proof}

The proof of the theorem is based on the Lemma~\ref{f00}, where we determined the total mass of the excursion measure on the event that the excursions start and end at the point $0$, respectively at $1$. This quantity determines the so called Laplace exponent of the inverse local time at zero, taking only into account the excursions that start and end at the same endpoint. 
\begin{proof}[Proof of Theorem~\ref{HD}]
    We will just prove the result for $\mathcal{R}_0$; the proof for $\mathcal{R}_1$ follows by interchanging the parameters $\theta_1$ and $\theta_2$ and interchanging the states $0$ and $1$. The proof in this case follows from an application of Corollary~5.3 in~\cite{bertoinsub}. To that end, we should determine the so-called lower index of the Laplace exponent of the inverse local time at $0$, for the Wright--Fisher diffusion issued from $0$. Actually, we notice that only the excursions that start and end at zero are relevant. This is a consequence of the fact that the intensity of the excursions that start at $0$ and end at $1$ is finite as it was proved in Corollary~\ref{cor:infinite_lifetime}. Then, in the local time scale, these `switching' kind of excursions appear only after a strictly positive amount of time, distributed as an exponential r.v.\ with parameter given by~\eqref{pfff}. Hence, in the time scale of the Wright--Fisher diffusion, the times where the process starts an excursion from $0$ and ends it at $1$ are isolated points in the set $\mathcal{R}_0$, and hence they do not contribute to the Hausdorff dimension of the time spent at the state $0$, where there is no presence of the alternative allele. 

     We will hence next determine the lower index of $\varphi_{0,0}$, 
     \begin{equation*}
         \begin{split}
    \underline{\text{ind}}(\varphi_{0,0})&:=\sup\{\rho>0: \lim_{\lambda\to\infty}\varphi_{0,0}(\lambda)\lambda^{-\rho}=\infty\}\\
             &=\sup\left\{\rho>0:\frac{\Gamma(1-a(\lambda))\Gamma(1-b(\lambda))}{\lambda^{\rho}\Gamma(1-\theta_2+b(\lambda))\Gamma(\theta_1-b(\lambda))}=\infty\right\},
         \end{split}
     \end{equation*}
where in the second equality we already used the identity obtained in Lemma~\ref{f00}. Next, using the definition of $a(\lambda)$ and $b(\lambda)$ in~\eqref{a&b},
it is worth noting that for $\lambda$ large enough 
\begin{equation*}
\begin{split}
\frac{3}{2}-\frac{\theta_1+\theta_2}{2}-i\sqrt{2\lambda-\left(\frac{|\theta|-1}{2}\right)^{2}}&=1-a(\lambda)=\overline{1-b(\lambda)}\\
&=\overline{\frac{3}{2}-\frac{\theta_1+\theta_2}{2}+i\sqrt{2\lambda-\left(\frac{|\theta|-1}{2}\right)^{2}}},
\end{split}
\end{equation*}
and 
\begin{equation*}
\begin{split}
\frac{1}{2}+\frac{\theta_1-\theta_2}{2}-i\sqrt{2\lambda-\left(\frac{|\theta|-1}{2}\right)^{2}}&=
1-\theta_2+b(\lambda)=\overline{\theta_1-b(\lambda)}\\
&=\overline{\frac{1}{2}+\frac{\theta_1-\theta_2}{2}+i\sqrt{2\lambda-\left(\frac{|\theta|-1}{2}\right)^{2}}};
\end{split}
\end{equation*}
where as usual $\overline{z}$ denotes the complex conjugate of any complex number $z$.
For notational convenience we denote 
\begin{align*}
    \delta_1=\frac{3}{2}-\frac{\theta_1+\theta_2}{2}, \qquad \delta_2=\frac{1}{2}+\frac{\theta_1-\theta_2}{2},
\end{align*} 
and
\begin{align*}
    z_\lambda:=\sqrt{2\lambda-\left(\frac{|\theta|-1}{2}\right)^{2}},
\end{align*} 
which is real for all $\lambda\geq (|\theta|-1)^{2}/8$. Thus, we are left to determine the more tractable expression
\begin{equation*}
         \begin{split}
             \underline{\text{ind}}(\varphi_{0,0})=\sup\left\{\rho>0:\lim_{\lambda\to\infty}\frac{\Gamma(\delta_{1}+iz_{\lambda})\Gamma(\delta_{1}-iz_{\lambda})}{\lambda^{\rho}\Gamma(\delta_{2}+iz_{\lambda})\Gamma(\delta_{2}-iz_{\lambda})}=\infty\right\}.
         \end{split}
     \end{equation*}
     We can next apply the estimate in (5.11.12) in \cite{NIST:DLMF} to get that 
     \begin{align*}         \frac{\Gamma(\delta_{1}+iz_{\lambda})\Gamma(\delta_{1}-iz_{\lambda})}{\Gamma(\delta_{2}+iz_{\lambda})\Gamma(\delta_{2}-iz_{\lambda})}\sim (z_{\lambda})^{2(\delta_1-\delta_2)},\qquad\lambda\to\infty.
     \end{align*}
     From where it follows that 
     \begin{align*}
         \underline{\text{ind}}(\varphi_{0,0})=\delta_{1}-\delta_2=1-\theta_1,
     \end{align*} 
     which concludes the proof.
\end{proof}

\appendix
\renewcommand{\theequation}{\thesection\arabic{equation}}
\setcounter{equation}{0}

\section{Hypergeometric functions}\label{SectionApendix}

This appendix reviews facts about hypergeometric functions which are relevant for this paper.
The main reference from which they are drawn is \cite{AbramowitzStegun}.
In general, all the parameters $a$, $b$, and $c$, as well as the variable $x$ of the hypergeometric function can be complex numbers. In our case the parameters will sometimes be complex, but the variable will always be a real number.
\subsection{Definition, radius of convergence and boundary values}
The rising factorial is defined by $(a)_n:=a(a+1)\ldots(a+n-1)$ and we have the following trivial relation to the Gamma function: $(a)_n\Gamma(a)=\Gamma(a+n)$.
For $\Re(c-a-b) > -1$, the Gauss hypergeometric function is defined by the power series
\begin{equation}\label{H2F1}
    \HG{a}{b}{c}{x}:=\sum_{n=0}^{\infty}\frac{(a)_n(b)_n}{(c)_nn!}x^n,
\end{equation}
which converges absolutely on $|x| \leq 1$ when $\Re(c-a-b)>0$, and converges conditionally on $|x| < 1$ when $-1<\Re(c-a-b)\leq 0$.
The series is not well defined when $c=-m$ for $m=0,1,2\ldots$ provided $a$ or $b$ is not a negative integer $-n$ with $n<m$. When $a$ or $b$ is equal to $-n$, for $n=0,1,2,\ldots$ the series reduces to a polynomial of degree $n$.
\begin{remark}\label{2F1convergence}
    In all instances of importance in this paper, the parameter $c$ will be one of $\{\theta_1, \theta_2, 2-\theta_1, 2-\theta_2\}$.
    We will also have that $c-a-b> -1$, mostly $c-a-b=1-\theta_2>0$ or $c-a-b=1-\theta_1>0$.
    Hence, by \eqref{assumption}, the power series \eqref{H2F1} will not be divergent.
\end{remark}
When $c\neq 0,-1, -2, \ldots$ and $\Re(c-a-b)>0$, we have the following important identity:
\begin{equation}\label{Fin1}
    \HG{a}{b}{c}{1} = \frac{\Gamma(c)\Gamma(c-a-b)}{\Gamma(c-a)\Gamma(c-b)}.
\end{equation}
Furthermore, when instead $a,b,c$ are such that $\Re(c-a-b)<0$ one has that 
\begin{equation}\label{limit2f1}
\lim_{z\to 1^{-}}(1-z)^{b+a-c}\HG{a}{b}{c}{z}=\frac{\Gamma(c)\Gamma(a+b-c)}{\Gamma(a)\Gamma(b)};
\end{equation} See e.g.\ (15.4.23) in \cite{NIST:DLMF}. Further identities to deal with the case where $\Re(c-a-b)=0$, are known but we will not need them here, see (15.4.21-22) op.cit.
\begin{remark}
    We use this formula repeatedly throughout the paper: for example in Remark \ref{Phiboundary} where, to get \eqref{phi-11}, one needs to check that $0<\theta_1-a(\lambda)-b(\lambda)=\theta_1-(|\theta|-1)=1-\theta_2$. Therefore, \eqref{phi-11} holds because $\theta_2<1$.
    Similarly, \eqref{phi+10} holds because $2-\theta_2-(2\theta_1-a(\lambda)-b(\lambda))=1-\theta_1>0$, by symmetry \eqref{phi-01} holds because $\theta_2<1$, and \eqref{phi+00} holds because $\theta_1<1$.
\end{remark}
Provided that the series \eqref{H2F1} is well defined we have the following identity:
\begin{equation}\label{Fin2}
    \HG{a}{b}{c}{0} = 1.
\end{equation}
\subsection{Derivatives}
The $x$-derivative of a hypergeometric function is again a hypergeometric function given by the following formula:
\begin{equation}\label{hypergeoDerivative}
    \frac{\dif }{\dif x}\left(\HG{a}{b}{c}{x}\right)=\frac{ab}{c}\HG{a+1}{b+1}{c+1}{x}.
\end{equation}
When using this formula one needs to be careful since $c+1-(a+1+b+1)=c-a-b-1$, so that the act of differentiating reduces the value of the convergence-determining ``$c-a-b$'' compared to the original function. 
\begin{remark}\label{HypergeometricDifferentialEquationRemark}
    We use this formula for example when checking the boundary conditions in equation \eqref{condition0}, where the derivative is $\HG{a(\lambda)+1}{b(\lambda)+1}{\theta_1+1}{y}$. In that case we have $\theta_1+1-a(\lambda)-1-b(\lambda)-1=\theta_1-1-(\theta_1+\theta_2-1)=-\theta_2 \in (-1,0)$. Therefore we have the conditional convergence of the series  on $|y| < 1$.
\end{remark}
\subsection{Relation to the hypergeometric differential equation}
The hypergeometric differential equation is given by
\begin{align}\label{HypergeometricDifferentialEquation}
    x(1-x)\frac{\dif ^{2}f(x)}{\dif  x^{2}} + \left( c - (a+b+1)x \right)\frac{\dif  f(x)}{\dif  x} - ab f(x)=0
\end{align}
for some parameters $a,b,c$. Note first that the differential equation \eqref{HypergeometricDifferentialEquation} has three singularities: at 0, at 1 and at $\infty$. We need not worry about this last case as we are only interested in cases when $x\in[0,1]$. If none of the numbers $c,c-a-b,a-b$ is equal to an integer then the two linearly independent solutions around $x=0$ are increasing and given by
\begin{align}\label{H2F1@0}
    \HG{a}{b}{c}{x}, & & x^{1-c}\HG{a-c+1}{b-c+1}{2-c}{x},
\end{align}
whilst the solutions around $x=1$ are decreasing and given by
\begin{align}\label{H2F1@1}
    \HG{a}{b}{a+b+1-c}{1-x}, & & (1-x)^{c-a-b}\HG{c-a}{c-b}{c-a-b+1}{1-x}.
\end{align}
Figure \ref{fig:hypergeometrics} displays the behaviour of these functions over the interval $[0,1]$.
\begin{figure}[ht]
    \centering
    \includegraphics[scale=1]{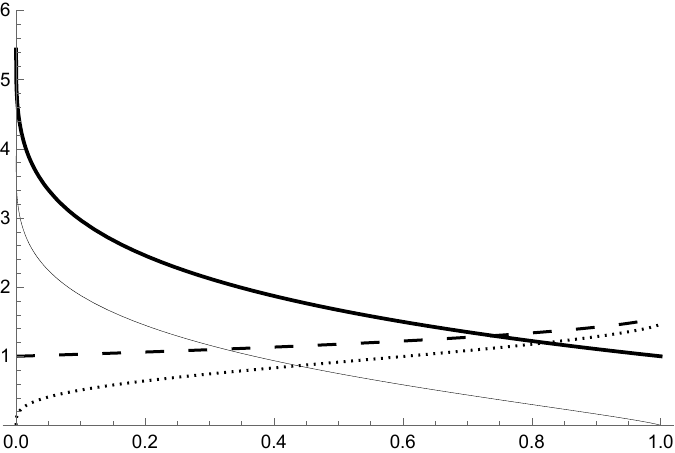}
    \caption{The four solutions to \eqref{HypergeometricDifferentialEquation}. Dashed curve is $\HG{a}{b}{c}{x}$, dotted is $x^{1-c}\HG{a-c+1}{b-c+1}{2-c}{x}$, bold is $\HG{a}{b}{a+b+1-c}{1-x}$, and thin is $(1-x)^{c-a-b}\HG{c-a}{c-b}{c-a-b+1}{1-x}$, all for $\theta_1=0.7$, $\theta_2=0.2$, and $\lambda=0.1$.}
    \label{fig:hypergeometrics}
\end{figure}

\begin{remark}\label{notalllambda}
    In our case it follows from \eqref{WFGenerator} and \eqref{GenEigFns} that $a=a(\lambda)$, $b=b(\lambda)$, and $c=\theta_1$. Therefore $c$ is not an integer and also $c-a-b=\theta_1-(\theta_1+\theta_2-1)=1-\theta_2$ is not an integer. To check that $a-b=\sqrt{(|\theta|-1)^2-8\lambda}$ is not an integer, note that $(|\theta|-1)^2 \in [0,1)$ and $\lambda\geq 0$. Therefore we only need to check if $a-b=0$, which happens only when $\lambda = (|\theta| - 1)^2 / 8$.
\end{remark}

Any three solutions are necessarily linearly related \cite[Theorem 2.3.2]{AndrewsAskeyRoy}; more specifically we have the following relations:
\begin{align}
    &\HG{a}{b}{a+b+1-c}{1-x}\notag\\
    &=\frac{\Gamma(1-c)\Gamma(a+b-c+1)}{\Gamma(a-c+1)\Gamma(b-c+1)} \HG{a}{b}{c}{x}\notag\\
    &\phantom{=}+\frac{\Gamma(c-1)\Gamma(a+b-c+1)}{\Gamma(a)\Gamma(b)} x^{1-c}\HG{a-c+1}{b-c+1}{2-c}{x},\label{lineardependence1}\\
    &(1-x)^{c-a-b}\HG{c-a}{c-b}{c-a-b+1}{1-x}\notag\\
    &=\frac{\Gamma(1-c)\Gamma(c-a-b+1)}{\Gamma(1-a)\Gamma(1-b)} \HG{a}{b}{c}{x}\notag\\
    &\phantom{=}+\frac{\Gamma(c-1)\Gamma(c-a-b+1)}{\Gamma(c-a)\Gamma(c-b)} x^{1-c}\HG{a-c+1}{b-c+1}{2-c}{x}.\label{lineardependence2}
\end{align}
Denote by $W(f_1,f_2)=f_1(x){f_2}'(x)-{f_1}'(x)f_2(x)$, and let $f$ and $g$ be the solutions \eqref{H2F1@0}, that is $f(x)=\HG{a}{b}{c}{x}$ and $g(x)=x^{1-c}\HG{a-c+1}{b-c+1}{2-c}{x}$.
Then we get that
\begin{equation}\label{wronskianHG}
    W(f,g)=(1-c)x^{-c}(1-x)^{c-a-b-1}.
\end{equation}
Additionally, if we denote by $h$ and $\kappa$ the solutions \eqref{H2F1@1}, that is
\begin{align*}
    h(x)&=\HG{a}{b}{a+b+1-c}{1-x}, \\
    \kappa(x)&=(1-x)^{c-a-b}\HG{c-a}{c-b}{c-a-b+1}{1-x},
\end{align*}
then we can use \eqref{lineardependence1} and \eqref{lineardependence2} together with the observation that if $W(f_1,f_2)=f_3$ and $f_4=\alpha f_1+\beta f_2$ for constant $\alpha, \beta$, then $W(f_1,f_4) = \beta f_3$, to deduce that
\begin{align}
    W(f,h)={}&\frac{\Gamma(c-1)\Gamma(a+b-c+1)}{\Gamma(a)\Gamma(b)}(1-c)x^{-c}(1-x)^{c-a-b-1},\label{W(Phi-,Phi+)}\\
    W(f,\kappa)={}&\frac{\Gamma(c-1)\Gamma(c-a-b+1)}{\Gamma(c-a)\Gamma(c-b)} (1-c)x^{-c}(1-x)^{c-a-b-1},\label{W(Phi1-,Phi1+)}\\
    W(g,h)={}&\frac{\Gamma(1-c)\Gamma(a+b-c+1)}{\Gamma(a-c+1)\Gamma(b-c+1)}(1-c)x^{-c}(1-x)^{c-a-b-1},\notag \\
    W(g,\kappa)={}&\frac{\Gamma(1-c)\Gamma(c-a-b+1)}{\Gamma(1-a)\Gamma(1-b)}(1-c)x^{-c}(1-x)^{c-a-b-1}. \notag
\end{align}

\section*{Acknowledgements}
Jere Koskela was supported by EPSRC research grant EP/V049208/1. This work was supported by The Alan Turing Institute under the EPSRC grant EP/N510129/1. Jaromir Sant was supported by the ERC (Starting Grant ARGPHENO 850869). Ivana Valenti{\'c} was supported by the Croatian Science Foundation under project IP-2022-10-2277. Victor Rivero joined this project while he was visiting the Department of Statistics at the University of Warwick, United Kingdom; he would like to thank his hosts for partial financial support as well as for their kindness and hospitality. In addition, VR is grateful for additional financial support  from  CONAHCyT-Mexico, grant nr.~852367. The authors are grateful for the many helpful comments of two anonymous referees, including one who provided formula \eqref{eq:greenproduct} and another who suggested our route to proving mutual singularity of path measures.

\bibliographystyle{abbrv}
\bibliography{bib}

\begin{thebibliography}{10}

\bibitem{AbramowitzStegun}
M.~Abramowitz and I.~A. Stegun.
\newblock {\em Handbook of Mathematical Functions with Formulas, Graphs, and
  Mathematical Tables}.
\newblock Dover, New York City, ninth dover printing, tenth gpo printing
  edition, 1964.

\bibitem{AndrewsAskeyRoy}
G.~E. Andrews, R.~Askey, and R.~Roy.
\newblock {\em Special Functions}.
\newblock Encyclopedia of Mathematics and its Applications. Cambridge
  University Press, 1999.

\bibitem{bertoinsub}
J.~Bertoin.
\newblock Subordinators: examples and applications.
\newblock In {\em Lectures on probability theory and statistics
  ({S}aint-{F}lour, 1997)}, volume 1717 of {\em Lecture Notes in Math.}, pages
  1--91. Springer, Berlin, 1999.

\bibitem{Blumenthal}
R.~M. Blumenthal.
\newblock {\em Excursions of {M}arkov Processes}.
\newblock Birkh\"auser Boston, MA, 2012.

\bibitem{BorodinSalminen}
A.~N. Borodin and P.~Salminen.
\newblock {\em Handbook of {Brownian} motion-facts and formulae}.
\newblock Springer Science \& Business Media, 2015.

\bibitem{campolieti2012}
G.~Campolieti and R.~N. Makarov.
\newblock On properties of analytically solvable families of local volatility
  diffusion models.
\newblock {\em Mathematical Finance: An International Journal of Mathematics,
  Statistics and Financial Economics}, 22(3):488--518, 2012.

\bibitem{crow1956some}
J.~Crow and M.~Kimura.
\newblock Some genetic problems in natural populations.
\newblock In {\em Proceedings of the Third Berkeley Symposium on Mathematical
  Statistics and Probability, Volume 4: Contributions to Biology and Problems
  of Health}, volume~3, pages 1--23. University of California Press, 1956.

\bibitem{DominguezDeLaIglesia}
M.~D. de~la Iglesia.
\newblock {\em Orthogonal Polynomials in the Spectral Analysis of {M}arkov
  Processes: Birth-death Models and Diffusion}, volume 181.
\newblock Cambridge University Press, 2021.

\bibitem{NIST:DLMF}
{\it NIST Digital Library of Mathematical Functions}.
\newblock \url{https://dlmf.nist.gov/}, Release 1.2.2 of 2024-09-15.
\newblock F.~W.~J. Olver, A.~B. {Olde Daalhuis}, D.~W. Lozier, B.~I. Schneider,
  R.~F. Boisvert, C.~W. Clark, B.~R. Miller, B.~V. Saunders, H.~S. Cohl, and
  M.~A. McClain, eds.

\bibitem{don:kur:1999:AAP}
P.~Donnelly and T.~G. Kurtz.
\newblock Genealogical processes for {Fleming--Viot} models with selection and
  recombination.
\newblock {\em Annals of Applied Probability}, 9(4):1091--1148, 1999.

\bibitem{don:etal:2024}
G.~D’Onofrio, P.~Patie, and L.~Sacerdote.
\newblock Jacobi processes with jumps as neuronal models: A first passage time
  analysis.
\newblock {\em SIAM Journal on Applied Mathematics}, 84(1):189--214, 2024.

\bibitem{d2018jacobi}
G.~D’Onofrio, M.~Tamborrino, and P.~Lansky.
\newblock {The {J}acobi diffusion process as a neuronal model}.
\newblock {\em Chaos: An Interdisciplinary Journal of Nonlinear Science},
  28(10):103119, 10 2018.

\bibitem{eth:2011}
A.~Etheridge.
\newblock {\em Some Mathematical Models from Population Genetics: {\'E}cole
  D'{\'E}t{\'e} de Probabilit{\'e}s de Saint-Flour XXXIX-2009}, volume 2012.
\newblock Springer Science \& Business Media, 2011.

\bibitem{EthierKurtz}
S.~N. Ethier and T.~G. Kurtz.
\newblock {\em {M}arkov processes: characterization and convergence}.
\newblock John Wiley \& Sons, 2009.

\bibitem{Ewe:1964}
W.~J. Ewens.
\newblock The pseudo-transient distribution and its uses in genetics.
\newblock {\em Journal of Applied Probability}, 1(1):141--156, 1964.

\bibitem{ewe:2004:I}
W.~J. Ewens.
\newblock {\em Mathematical Population Genetics}.
\newblock Springer-Verlag, New York, 2nd edition, 2004.

\bibitem{gri:1979:AAP11:310}
R.~C. Griffiths.
\newblock A transition density expansion for a multi-allele diffusion model.
\newblock {\em Advances in Applied Probability}, 11(2):310--325, 1979.

\bibitem{gri:2006}
R.~C. Griffiths.
\newblock Coalescent lineage distributions.
\newblock {\em Advances in Applied Probability}, 38:405--429, 2006.

\bibitem{Griffithsetal2018}
R.~C. Griffiths, P.~A. Jenkins, and D.~Span{\`o}.
\newblock {Wright}--{Fisher} diffusion bridges.
\newblock {\em Theoretical Population Biology}, 122:67--77, 2018.

\bibitem{gri:li:1983}
R.~C. Griffiths and W.-H. Li.
\newblock Simulating allele frequencies in a population and the genetic
  differentiation of populations under mutation pressure.
\newblock {\em Theoretical Population Biology}, 23:19--33, 1983.

\bibitem{GriffithsSpano2010}
R.~C. Griffiths and D.~Span{\`o}.
\newblock {\em Diffusion processes and coalescent trees}, pages 358--379.
\newblock London Mathematical Society Lecture Note Series. Cambridge University
  Press, 2010.

\bibitem{GriffithsSpano2013}
R.~C. Griffiths and D.~Span{\`o}.
\newblock Orthogonal polynomial kernels and canonical correlations for
  {Dirichlet} measures.
\newblock {\em Bernoulli}, 19(2):548--598, 2013.

\bibitem{Ito}
K.~It{\^o}.
\newblock Poisson point processes attached to {Markov} processes.
\newblock {\em Berkeley Symp. on Math. Statist. and Prob.}, pages 225--239,
  1972.

\bibitem{ItoMcKean}
K.~It\^o and H.~P. McKean.
\newblock {\em Diffusion Processes and their Sample Paths}.
\newblock Springer Berlin, Heidelberg, 1965.

\bibitem{Jenkins24}
P.~A. Jenkins.
\newblock The mutual arrangement of {W}right--{F}isher diffusion path measures
  and its impact on parameter estimation, 2024.
\newblock arXiv:2410.15955.

\bibitem{JenkinsSpano2017}
P.~A. Jenkins and D.~Span{\`o}.
\newblock {Exact simulation of the {Wright}--{Fisher} diffusion}.
\newblock {\em The Annals of Applied Probability}, 27(3):1478 -- 1509, 2017.

\bibitem{KarlinTaylor}
S.~Karlin and H.~M. Taylor.
\newblock {\em A second course in stochastic processes}.
\newblock Academic Press, Inc. [Harcourt Brace Jovanovich, Publishers], New
  York-London, 1981.

\bibitem{kna:2005}
A.~W. Knapp.
\newblock {\em Advanced real analysis}.
\newblock Springer, 2005.

\bibitem{lanska1994synaptic}
V.~L{\'a}nsk{\'a}, P.~L{\'a}nsk{\`y}, and C.~E. Smith.
\newblock Synaptic transmission in a diffusion model for neural activity.
\newblock {\em Journal of Theoretical Biology}, 166(4):393--406, 1994.

\bibitem{mai:1975}
B.~Maisonneuve.
\newblock Exit systems.
\newblock {\em Annals of Probability}, 3(3):399--411, 1975.

\bibitem{PitmanYor81}
J.~Pitman and M.~Yor.
\newblock Bessel processes and infinitely divisible laws.
\newblock In D.~Williams, editor, {\em Stochastic Integrals}, pages 285--370,
  Berlin, Heidelberg, 1981. Springer Berlin Heidelberg.

\bibitem{PitmanYor2003}
J.~Pitman and M.~Yor.
\newblock {Hitting, occupation and inverse local times of one-dimensional
  diffusions: martingale and excursion approaches}.
\newblock {\em Bernoulli}, 9(1):1 -- 24, 2003.

\bibitem{RevuzYor}
D.~Revuz and M.~Yor.
\newblock {\em Continuous martingales and {B}rownian motion}, volume 293.
\newblock Springer Science \& Business Media, 2013.

\bibitem{RogersResolvents}
L.~C.~G. Rogers.
\newblock It{\^o} excursion theory via resolvents.
\newblock {\em Zeitschrift f\"ur Wahrscheinlichkeitstheorie und Verwandte
  Gebiete}, 63:237–255, 1983.

\bibitem{RogersExcursions}
L.~C.~G. Rogers.
\newblock A guided tour through excursions.
\newblock {\em Bulletin of the London Mathematical Society}, 21(4):305--341,
  1989.

\bibitem{RogersWilliams}
L.~C.~G. Rogers and D.~Williams.
\newblock {\em Diffusions, {M}arkov Processes and Martingales}, volume~2 of
  {\em Cambridge Mathematical Library}.
\newblock Cambridge University Press, 2 edition, 2000.

\bibitem{Santetal2022}
J.~Sant, P.~A. Jenkins, J.~Koskela, and D.~Span{\`o}.
\newblock Convergence of likelihood ratios and estimators for selection in
  nonneutral {Wright}--{Fisher} diffusions.
\newblock {\em Scandinavian Journal of Statistics}, 49(4):1728--1760, 2022.

\bibitem{Santetal2023}
J.~Sant, P.~A. Jenkins, J.~Koskela, and D.~Span{\`o}.
\newblock {EWF: simulating exact paths of the {Wright}--{Fisher} diffusion}.
\newblock {\em Bioinformatics}, 39(1):btad017, 01 2023.

\bibitem{sat:1976:OJM}
K.~Sato.
\newblock Diffusion processes and a class of {Markov} chains related to
  population genetics.
\newblock {\em Osaka Journal of Mathematics}, 13:631--659, 1976.

\bibitem{schoutens2012stochastic}
W.~Schoutens.
\newblock {\em Stochastic processes and orthogonal polynomials}, volume 146.
\newblock Springer Science \& Business Media, 2012.

\bibitem{Szego}
G.~Szeg\"o.
\newblock {\em Orthogonal Polynomials}.
\newblock Colloquium Publications, vol. 23, American Mathematical Society, New
  York, fourth edition edition, 1975.

\bibitem{YamadaWatanabe1971}
T.~Yamada and S.~Watanabe.
\newblock {On the uniqueness of solutions of stochastic differential
  equations}.
\newblock {\em Journal of Mathematics of Kyoto University}, 11(1):155 -- 167,
  1971.

\end{thebibliography}

\end{document}